\documentclass[english,15pt]{article}
\usepackage[T1]{fontenc}
\usepackage{geometry}
\usepackage{float}
\usepackage{mathrsfs}
\usepackage{amsmath}
\usepackage{amsthm}
\usepackage{amssymb}
\usepackage{graphicx}
\usepackage{hyperref}
\hypersetup{
    colorlinks=true,
    linkcolor=blue,
    filecolor=magenta,      
    urlcolor=cyan,
    citecolor = blue,
}

\newcounter{hypA}
\newenvironment{hypA}{\refstepcounter{hypA}\begin{itemize}
  \item[({\bf A\arabic{hypA}})]}{\end{itemize}}

\newcounter{hypH}
\newenvironment{hypH}{\refstepcounter{hypH}\begin{itemize}
  \item[({\bf H\arabic{hypH}})]}{\end{itemize}}

\usepackage{bbm} 
\usepackage{graphicx}
\usepackage{subcaption}
\usepackage{makeidx}
\usepackage{amssymb}
\usepackage[utf8]{inputenc}

\DeclareMathOperator*{\argmin}{arg\,min}

\providecommand{\U}[1]{\protect\rule{.1in}{.1in}}

\newtheorem{prop}{Proposition}[section]
\newtheorem{cor}[prop]{Corollary}

\newtheorem{rmk}[prop]{Remark}
\newtheorem{lem}[prop]{Lemma}

\newtheorem{theo}[prop]{Theorem}
\newtheorem{examp}[prop]{Example}

\newcommand{\tr}{\mbox{\rm Tr}}

\newcommand{\CC}{\mathbb{C}}

\newcommand{\EE}{\mathbb{E}}

\newcommand{\MM}{\mathbb{M}}

\newcommand{\PP}{\mathbb{P}}
\newcommand{\QQ}{\mathbb{Q}}
\newcommand{\RR}{\mathbb{R}}

\newcommand{\UU}{\mathbb{U}}
\newcommand{\VV}{\mathbb{V}}
\newcommand{\XX}{\mathbb{X}}

\newcommand{\YY}{\mathbb{Y}}

\newcommand{\Ca}{ {\cal C }}
\newcommand{\Da}{ {\cal D }}

\newcommand{\Sa}{ {\cal S }}

\newcommand{\Ua}{ {\cal U }}

\newcommand{\Ha}{ {\cal H }}

\newcommand{\point}{\mbox{\LARGE .}}

\def \PP{\mathbb{P}}
\def \RR{\mathbb{R}}

\def \EE{\mathbb{E}}

\def \CC{\mathbb{C}}

\def \BB{\mathbb{B}}
\newcommand{\cchi}{\protect\raisebox{2pt}{$\chi$}}

\usepackage[textwidth=2.5cm, textsize=scriptsize]{todonotes}

\newcommand{\vertiii}[1]{{\left\vert\kern-0.25ex\left\vert\kern-0.25ex\left\vert #1
    \right\vert\kern-0.25ex\right\vert\kern-0.25ex\right\vert}}

\begin{document}


\begin{center}
{\Large \textbf{On Bridging Mixture Distributions}}

\vspace{0.5cm}

PIERRE DEL MORAL$^{1}$, AJAY JASRA$^{2}$ \& KE ZHAO$^{3}$

{\footnotesize $^{1}$Centre de Recherche Inria Bordeaux Sud-Ouest, Talence,  FR.}\\
{\footnotesize $^{2}$Computing \& Mathematical Sciences Division,  Mohamed Bin Zayed University of Artificial Intelligence,  Abu Dhabi, UAE.}\\
{\footnotesize $^{3}$School of Data Science,  The Chinese University of Hong Kong, Shenzhen,  Shenzhen,  CN.}\\
{\footnotesize E-Mail:\,} 
\texttt{\emph{\footnotesize pierre.del-moral@inria.fr}};
\texttt{\emph{\footnotesize Ajay.Jasra@mbzuai.ac.ae}};
\texttt{\emph{\footnotesize kezhao@link.cuhk.edu.cn}}

\begin{abstract}
In this article we consider bridging between two mixture probability measures.   In particular,  given access to 
a Markov kernel between two component distributions,   we provide a general mechanism to
generate samples from one mixture to the other.  
Associated to a given reference and extended state space,  we prove entropic optimality of this approach.
In order to use this idea one needs to know the underlying mixtures and the Markov kernel,  which is seldom available,  and so we consider the case of Gaussian mixtures and Schr\"odinger Bridges. We prove a general $2-$Wasserstein continuity bound between the exact bridge and one that is approximated,  based on $\epsilon-$covariance inflation,  and these rely on a novel continuity analysis of perturbed Riccati maps.  We apply our results in the context of bridging mixtures of Gaussians, 
single Gaussians and empirical estimators of the Gaussian parameters and the Monge map.  
For mixtures of Gaussians,  when the parameters are estimated using the Expectation-Maxmization algorithm,
the upper-bound on the $2-$Wasserstein distance between the 
true and approximated bridges is,  under assumptions and with probability at least $1-10N^{-1}$,  
$\mathcal{O}\big(\big[\big(\tfrac{d\log N}{N}\big)^{1/2}\left\{1+\big(\tfrac{d\log N}{N}\right)^{1/2}(\epsilon^{-2}+1)\big\}+\epsilon^2\big]\big)
$
and for the other two cases, in expectation,  $\mathcal{O}\left(d\left\{\tfrac{1+\epsilon^{-2}}{1+N}+\epsilon^2\right\}\right)$,  where $d,N\in\mathbb{N}$ is the dimension of the Gaussian and the number of empirical samples respectively.    We also investigate our bounds numerically. 
\\
\bigskip
\noindent\textbf{Keywords:} Schr\"odinger Bridges,  Wasserstein Continuity,  Gaussian Mixtures, Riccati Equation.
\end{abstract}

\end{center}




\section{Introduction}

The problem of being able to bridge between two probability measures,  that is to sample from one probability and then to move that sample to one of the other probability is ubiquitous in mathematics,  probability and generative
artificial intelligence (AI) with countless applications in real scientific problems;  the reader is referred to \cite{csis,cut,bort,sinkhorn_rev,genev,leonard} and the references therein. 
There are by now multiple methodologies to bridge between probability measures 
often based on the Schr\"odinger bridge 
and perhaps the most popular approximations of these are Sinkhorn algorithms/iterative proportional fitting
and related methods;  see for example \cite{cut,bort,sinkhorn_rev,koro2,koro1}.   
Whilst there exist several mathematical results for the convergence of these approaches,  they often rely on machine learning methods,  such as deep neural networks,  which themselves are not fully understood from a mathematical perspective or make  assumptions which are rather strong e.g.~\cite{maeda,pool,stromme} or have approximations
which cannot be sampled easily \cite{har}.
The underlying principle of this article is to build a bridge methodology which can potentially be mathematically analyzed,  giving explicit convergence rates, albeit potentially relying itself on estimation methodology as we will now explain.  

We focus on the problem of bridging between mixture probability measures.  For example this can be a finite mixture model (e.g.~\cite{mix}) or a continuous mixture.   These probability measures are rather rich in terms of their approximation properties (e.g.~\cite{mix_approx}) to provide a non-trivial problem for which to base an algorithm.  
The problem of bridging between mixtures has been considered by several authors. 
\cite{chen} consider discrete Gaussian mixtures and focus on the idea that since the square Wasserstein distance between
Gaussians are known,  one can focus on computing a particular optimal jumping probability between components and ultimately a geodesic for bridging between Gaussian mixtures.  That article however does not consider parameter uncertainty,  which is a core contribution of our work,  as we will see below. 
 \cite{rapak} (see also \cite{alberg} and \cite{rapak1} in the context of mean-field Schr\"odinger bridges) consider Gaussian mixtures and provide a sub-optimal solutions to the Schr\"odinger bridge problem based upon mixtures of Gaussian Schr\"odinger bridges solutions.  That work is different to that done in the present article as we consider a slightly more general problem and then,  as we state later on,  provide practical error bounds whereas \cite{rapak} do not consider this latter issue from a mathematical perspective.  The work of \cite{rapak} also focusses on low to moderate dimensional problems and in principle,  due to the covariance inflation method that we will employ we could potentially tackle higher-dimensional problems,  although that is not the focus of this work.   Just as mentioned in 
\cite{rapak},  we try to obtain a `fast' methodology for bridging between mixtures, also as in \cite{koro2,koro1},  but the latter approaches focus on a more general problem and ultimately with many approximations (such as Gaussian mixtures) which can make the methodology challenging to analyze,  at least mathematically.

In this article we make the following contributions.
\begin{itemize}
\item{\emph{Construction.}  We provide a general procedure of bridging between mixtures of probability measures.
We prove its entropic optimality,  in some sense,  in Proposition~\ref{prop:var}.}
\item{\emph{Gaussian Mixtures.}  In the case of Gaussian mixtures we provide
a $2-$Wasserstein continuity results between the exact bridge constructed in the previous point
and one that has been approximated,  in Theorem \ref{thm:conti} and Corollary~\ref{cor:unknown}.}
\item{\emph{Stability.} We develop a perturbation analysis of the Riccati
parameterization of Gaussian bridges with an $\epsilon$-inflated covariance estimation,
leading to a stability theorem for mixture bridges with the stability--bias
structure $(1+\epsilon^{-2})\delta^2+\epsilon^2$, in Theorem~\ref{theo-main}
and where $\delta$ is a `component estimation
error'.}
\item{\emph{Finite-sample rates.} In the context of bridging mixtures of Gaussians, single Gaussians and and the Monge map we consider applications of our results.
For Gaussian mixtures that are estimated using the Expectation-Maximization (EM) algorithm,  we show
that with probability at least $1-10N^{-1}$ the $2-$Wasserstein distance between the true and approximate brigdes is upper-bounded by 
a term that is 
$$
\mathcal{O}\left(\left[\left(\frac{d\log N}{N}\right)^{1/2}\left\{1+\left(\frac{d\log N}{N}\right)^{1/2}(\epsilon^{-2}+1)\right\}+\epsilon^2\right]\right)
$$
where $d,N\in\mathbb{N}$ 
is the dimension of the Gaussian and
 the number of empirical samples respectively. This is Corollary \ref{cor:em}.
In addition,  that the expected $2-$Wasserstein distance between the 
true and approximated bridges are upper-bounded by,  under assumptions,  terms that are $
\mathcal{O}\left(d\left\{\tfrac{1+\epsilon^{-2}}{1+N}+\epsilon^2\right\}\right)$.   These are
Corollary \ref{cor-gauss-rate} and Theorem \ref{theo-monge}.
}
\item{\emph{Numerical illustration.} Experiments validate the role of
inflation. The robustness of the rates to unlabeled expectation-maximization (EM)
component estimation is also considered for Gaussian mixtures.}
\end{itemize}

This article is structured as follows. 
In Section~\ref{sec:pre} we give our notation and recalls the Bures--Wasserstein geometry of
Gaussian measures.  Section~\ref{sec:product} introduces bridge products and their variational
characterization. Section~\ref{sec:gauss-mix} specializes to Gaussian mixture bridges and
states the continuity theorem. Section~\ref{sec:stability} develops the perturbation analysis
associated to perturbed Riccati maps
and the main stability theorem.  Section~\ref{sec:empirical} derives the finite-sample rates and
the Monge limit. Section~\ref{sec:num} presents the numerical illustrations. 
The appendix houses some of the proofs of the technical results used in the article. 
%

\section{Preliminaries}\label{sec:pre}

\subsection{Notation}
 The $2$-Wasserstein distance between probability measures $\eta_1$ and $\eta_2$ on some metric space $(\XX,\rho_{\XX})$ is given by
   $$
   \Da_{2}(\eta_1,\eta_2):= \inf~\left(\int~\rho_{\XX} (x_1,x_2)^2~\pi(d(x_1,x_2))\right)^{1/{2}}
   $$ 
where the infimum is taken over the convex subset $\Ca(\eta_1,\eta_2)$ of probability measures $\pi(d(x_1,x_2))$ with marginal $\eta_1$ w.r.t. the first coordinate and marginal $\eta_2$ w.r.t. the second coordinate. 

{Let $\Sa^0_d$ be set of positive semi-definite matrices in $\RR^{d \times d}$, and let $\Sa^+_d\subset \Sa^0_d$ be the subset of positive definite matrices.}
 Denote by $\nu_{m,\sigma}$ the Gaussian distribution on $\RR^d$ with mean $m\in\RR^d$ and covariance matrix $\sigma \in \Sa^+_d$. 
The geometric mean $\sigma_1~\sharp~ \sigma_2$ of {two} positive definite matrices $\sigma_1,\sigma_2\in\Sa^+_d$ is defined by
\begin{equation}\label{sym-sharp}
\sigma_1~\sharp~\sigma_2=\sigma_2~\sharp~\sigma_1:=\sigma_2^{1/2}~ \left(\sigma_2^{-1/2}~\sigma_1~\sigma_2^{-1/2}\right)^{1/2}~\sigma_2^{1/2}.
\end{equation}
We recall that the geometric mean is the unique solution of the Riccati equation
$$
(\sigma_1~\sharp~\sigma_2)~\sigma_1^{-1}~(\sigma_1~\sharp~\sigma_2)=\sigma_2, \quad \text{or, equivalently,} \quad (\sigma_2~\sharp~\sigma_1)~\sigma_2^{-1}~(\sigma_2~\sharp~\sigma_1)=\sigma_1.
$$

We denote by $\lambda_{\text{\rm min}}(w)$ and $\lambda_{\text{max}}(w)$ the minimal and the maximal eigenvalues, respectively, of a symmetric matrix $w\in\RR^{d\times d}$ for some $d\geq 1$.
The Frobenius matrix norm of a given matrix $w$ is defined by
$\left\Vert w\right\Vert_{F}^2=\tr(w^{\prime}w)$, 
with the trace operator $\tr(\cdot)$ and $w^{\prime}$ the transpose
of the matrix $w$. The spectral norm is defined by $\Vert w\Vert_2=\sqrt{\lambda_{\text{max}}(w^{\prime}w)}$. We recall the equivalence property
$$
\left\Vert w\right\Vert_{2}\leq \left\Vert w\right\Vert_{F}\leq \sqrt{d}~\left\Vert w\right\Vert_{2}.
$$
For any $\sigma\in\Sa_{d}^+$ we have
$$
\Vert \sigma^{1/2}\Vert_2=\sqrt{\lambda_{\max}(\sigma)}=\sqrt{\Vert \sigma\Vert_2}
\quad\mbox{\rm and}\quad
\Vert \sigma^{-1}\Vert_2=\lambda_{\max}(\sigma^{-1})=\frac{1}{\lambda_{\min}(\sigma)}.
$$
We also have the nuclear (a.k.a. trace norm) norm formula
$$
\Vert \sigma^{1/2}\Vert_F^2=\tr(\sigma)\quad
\mbox{\rm and by Cauchy--Schwartz}\quad \tr(\sigma^{1/2})\leq \sqrt{d~\tr(\sigma)}.
$$
For any vector $x \in \RR^d$, we write $\|x\|_2$ for the Euclidean norm and $\|x\|_1=\sum_{i=1}^d|x_i|$ for the $l_1$ norm. Given a symmetric matrix $\sigma \in \Sa_d^+$,
the Mahalanobis norm is
\[
\Vert x\Vert_{\sigma}:=\sqrt{x'\sigma^{-1}x}=\Vert\sigma^{1/2}x\Vert_2
\]
In this context, we have
\begin{equation}\label{Mnorm}
    \Vert x\Vert_{\sigma}^2={x'\sigma^{-1}x}\ge \lambda_{\max}^{-1}(\sigma)\Vert x\Vert_2^2
\end{equation}

\subsection{Preliminary Estimates}

In the following section we present several inequalities which will be useful throughout the article.
For any $\sigma_1,\sigma_2\in\Sa_{d}^+$ we have the Ando-Hemmen inequality
\begin{equation}\label{square-root-key-estimate}
\Vert \sigma_1^{1/2}-\sigma_2^{1/2}\Vert \leq \left[\lambda^{1/2}_{\rm min}(\sigma_1)+\lambda^{1/2}_{\rm min}(\sigma_2)\right]^{-1}~\Vert \sigma_1-\sigma_2\Vert
\end{equation}
that holds for any unitary invariant matrix norm $\Vert\cdot \Vert$, including the spectral and the Frobenius norm; see for instance \cite[Theorem 6.2]{higham} or \cite[Proposition 3.2]{hemmen}.   Using \eqref{sym-sharp} we find the identities
\begin{eqnarray*}
\sigma_1^{-1}~\sharp~\sigma_2&=&\sigma_2^{1/2}~ \left(\sigma_2^{-1/2}~\sigma_1^{-1}~\sigma_2^{-1/2}\right)^{1/2}~\sigma_2^{1/2},\\
&=&\sigma_2~\sharp~\sigma_1^{-1}=\sigma_1^{-1/2}~ \left(\sigma_1^{1/2}~\sigma_2~\sigma_1^{1/2}\right)^{1/2}~\sigma_1^{-1/2}\Longrightarrow
(\sigma_1^{-1}~\sharp~ \sigma_2)~\sigma_1~(\sigma_1^{-1}~\sharp~ \sigma_2)=\sigma_2.
\end{eqnarray*}

For any given  $m_1,m_2\in\RR^d$ and $\sigma_1,\sigma_2 \in \Sa^+_d$, we have
  \begin{equation}\label{burg-def}
\Da_2(\nu_{m_1,\sigma_1},\nu_{m_2,\sigma_2})^2=D_{bw}(\sigma_1,\sigma_2)^2+\Vert m_1-m_2\Vert_2^2
 \end{equation} 
 with the  Bures-Wasserstein distance~\cite{bathia-2} on $\Sa^+_d$ given by
\begin{eqnarray}
 D_{bw}(\sigma_1,\sigma_2)^2&=&\tr(\sigma_1)+\tr(\sigma_2)-2\tr\left(\left( \sigma_2^{1/2}~\sigma_1~\sigma_2^{1/2}\right)^{1/2}\right)\label{bw-def}\\
 &=&\min_{w\in \Ua(d) }\Vert \sigma_1^{1/2}-\sigma_2^{1/2}w\Vert_F^2\nonumber
\end{eqnarray}
where $\Ua(d)$ stands for the group of unitary matrices. This yields the estimate
\begin{eqnarray}
 D_{bw}(\sigma_1,\sigma_2) &\leq& 
 \Vert \sigma_1^{1/2}-\sigma_2^{1/2}\Vert_F \leq \left[\lambda^{1/2}_{\rm min}(\sigma_1)+\lambda^{1/2}_{\rm min}(\sigma_2)\right]^{-1}~\Vert \sigma_1-\sigma_2\Vert_F
\nonumber
\end{eqnarray}
from which we check that
\begin{equation}\label{eqD2-g}
\Da_2(\nu_{m_1,\sigma_1},\nu_{m_2,\sigma_2})^2\leq \left[\lambda^{1/2}_{\rm min}(\sigma_1)+\lambda^{1/2}_{\rm min}(\sigma_2)\right]^{-2}~\Vert \sigma_1-\sigma_2\Vert_F^2+\Vert m_1-m_2\Vert_2^2. 
\end{equation}
We recall that
\begin{equation}\label{eq:monge}
\Da_2(\nu_{m_1,\sigma_1},\nu_{m_2,\sigma_2})^2=\int \nu_{m_1,\sigma_1}(dx)~\Vert x-T(x)\Vert^2
\end{equation}
with the optimal Monge transport map
$$
T(x):=m_2+(\sigma_1^{-1}~\sharp~ \sigma_2)~\left(x-m_1\right).
$$
We define  the relative entropy of two mutually absolutely continuous probabilities $Q$ and $P$ on the same state space $\XX$ say
$$
 \Ha(Q|P):=\int_{\XX} \log\left(\tfrac{dQ}{dP}\right)dQ
$$
where ${dQ}/{dP}$ the Radon-Nikodym derivative of $Q$ with respect to $P$.
The relative entropy $\Ha\left(\nu_{m_1,\sigma_1}~|~\nu_{m_2,\sigma_2}\right)$ of $\nu_{m_1,\sigma_1}$ w.r.t. $\nu_{m_2,\sigma_2}$ is given by the formula
 {
 \begin{equation}\label{KL-def}
\Ha\left(\nu_{m_1,\sigma_1}~|~\nu_{m_2,\sigma_2}\right)
 = \frac{1}{2}\left(
  D(\sigma_1~|~\sigma_2)+\Vert \sigma_2^{-1/2}\left(m_1-m_2\right)\Vert^2_2
\right)
 \end{equation}
 }
with the Burg (or log-det) divergence
\begin{equation}\label{burg-div}
 D(\sigma_1~|~\sigma_2):=\tr\left(\sigma_1\sigma_2^{-1}-I\right)-\log{\mbox{det}\left(\sigma_1\sigma_2^{-1}\right)}.
\end{equation}
We also have the following estimate
\begin{equation*}
 \displaystyle\Vert \sigma_1-\sigma_2\Vert_F~\Vert \sigma_2^{-1}\Vert_F
\leq \frac{1}{2}
\Longrightarrow
 D(\sigma_1~|~\sigma_2)\leq \frac{1}{2}
  \left\Vert \sigma_2^{-1}\right\Vert_F~\left\Vert\sigma_1-\sigma_2\right\Vert_F.
\end{equation*}

Consider some metric spaces $(\XX,\rho_{\XX})$ and
$(\YY,\rho_{\YY})$ and equip  $(\XX\times \YY)$ with the distance
$$
\rho((x,y),(\overline{x},\overline{y}))^2=
\rho_{\XX}(x,\overline{x})^2+\rho_{\YY}(y,\overline{y})^2.
$$
Next we present an estimate of the $2$-Wasserstein distance $\Da_2(\pi,\overline{\pi})$ between probability measures  of the following form
$$
\pi(d(x,y)):=(\mu\times K)(d(x,y)):=\mu(dx)K(x,dy)$$
and
$$
\overline{\pi}(d(x,y)):=(\overline{\mu}\times \overline{K})(d(x,y)):=\overline{\mu}(dx)\overline{K}(x,dy)
$$
We have the following technical result which is proved in Appendix \ref{sec:lem_prf}.   The result will be used in our analysis later on in the article.
\begin{lem}\label{Lem-Tex}
Assume there exists $a\geq 0$ such that for any $x,\overline{x}\in\XX$ we have
\begin{equation}\label{est-hyp}
\Da_2\left( \overline{K}(x,\point), \overline{K}(\overline{x},\point)\right)\leq a~\rho_{\XX}(x,\overline{x}).
\end{equation}
Then we have that
$$
\Da_2\left((\mu\times K),(\overline{\mu}\times \overline{K})\right)\leq 
\sqrt{1+a^2}~\Da_2(\mu,\overline{\mu})+\left( \int~\mu(dx)~ \Da_2\left(K(x,\point),\overline{K}(x,\point)\right)^2 \right)^{1/2}.
$$
\end{lem}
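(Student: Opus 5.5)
The plan is to pass through the intermediate measure $\mu\times\overline{K}$ and use the triangle inequality for $\Da_2$:
$$
\Da_2\left(\mu\times K,\overline{\mu}\times \overline{K}\right)\leq \Da_2\left(\mu\times K,\mu\times \overline{K}\right)+\Da_2\left(\mu\times \overline{K},\overline{\mu}\times \overline{K}\right).
$$
The first term accounts for changing the kernel with the same first marginal. The second accounts for changing the first marginal with the same kernel $\overline{K}$. I would bound each term by building an explicit coupling. Measurable selection of couplings is used throughout; it is standard on Polish spaces, and I would assume it, or pass through $\delta$-optimal couplings and let $\delta\to 0$.

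\emph{First term.} For each $x$, choose an optimal coupling $P_x\in\Ca(K(x,\point),\overline{K}(x,\point))$ that depends measurably on $x$. Define a coupling of $\mu\times K$ and $\mu\times\overline{K}$ by taking $x$ with law $\mu$, setting $\overline{x}=x$, and drawing $(y,\overline{y})\sim P_x$. The $x$-component then contributes nothing to the cost, so the squared cost equals $\int\mu(dx)\,\Da_2(K(x,\point),\overline{K}(x,\point))^2$. Taking the square root gives exactly the second term of the claimed bound.

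\emph{Second term.} Take an optimal coupling $\pi_0\in\Ca(\mu,\overline{\mu})$. For each pair $(x,\overline{x})$, take a measurable optimal coupling $Q_{x,\overline{x}}$ of $\overline{K}(x,\point)$ and $\overline{K}(\overline{x},\point)$. Drawing $(x,\overline{x})\sim\pi_0$ and then $(y,\overline{y})\sim Q_{x,\overline{x}}$ gives a coupling of $\mu\times\overline{K}$ and $\overline{\mu}\times\overline{K}$. Its squared cost is
$$
\int\pi_0(d(x,\overline{x}))\left[\rho_{\XX}(x,\overline{x})^2+\Da_2(\overline{K}(x,\point),\overline{K}(\overline{x},\point))^2\right]\leq (1+a^2)\,\Da_2(\mu,\overline{\mu})^2,
$$
where the inequality uses \eqref{est-hyp}. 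This gives the factor $\sqrt{1+a^2}$.

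Adding the two bounds yields the claim. The only delicate point is the measurability of the selected couplings $x\mapsto P_x$ and $(x,\overline{x})\mapsto Q_{x,\overline{x}}$. I would handle it with the standard measurable selection theorem for optimal plans (e.g.\ Villani, Corollary 5.22). If that is not available, I would use the gluing lemma with $\delta$-optimal couplings on a countable partition and let $\delta\to0$.
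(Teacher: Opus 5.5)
Your proposal is correct and follows the paper's proof essentially step for step. Both use the triangle inequality through $\mu\times\overline{K}$. For the first term, both glue the diagonal coupling $\mu(dx)\,\delta_x(d\overline{x})$ with pointwise optimal couplings of $K(x,\point)$ and $\overline{K}(x,\point)$. For the second term, both glue an optimal coupling of $\mu,\overline{\mu}$ with optimal couplings of $\overline{K}(x,\point)$ and $\overline{K}(\overline{x},\point)$ and then apply \eqref{est-hyp}. Your explicit remark on measurable selection of these couplings covers a point the paper uses without comment.
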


\section{Bridging Mixtures}\label{sec:product}

In the following section we consider bridging between two mixture target distributions. 
The basic idea is to consider a bridge between two `components' of the mixture and be able to bridge
on a product space (bridge products,  Section \ref{sec:bridge_prod}) and to show this can be used on the marginals (the mixtures,  Section \ref{sec:marg}) and finally we consider an associated variational formulation (Section \ref{sec:vari_form}) which provides a type of optimality of the procedure that is proposed. 

\subsection{Bridge Products}\label{sec:bridge_prod}

Consider probability measures $\imath(du)$ and
$\jmath(dv)$ on some state spaces $\UU$ and $\VV$. 
These spaces will correspond to mixture components,  for instance $\UU=\{1,\dots,k_U\}$
and $\VV=\{1,\dots,k_V\}$,  but the formulation does not constrain one to such a finite mixture scenario.
In addition,  consider Markov transitions
$P(u,dx)$ and $Q(v,dy)$ from $\UU$ and $\VV$ into some state spaces $\XX$ and $\YY$.
We now introduce the product measures $\PP$ and 
$\QQ$ on the product spaces $(\UU\times\XX)$ and $(\VV\times\YY)$ defined by
$$
\PP(d(u,x)):=\imath(du)~P(u,dx)
\quad\mbox{\rm and}\quad
\QQ(d(v,y)):=\jmath(dv)~Q(v,dy).
$$
The state spaces $\XX$ and $\YY$ will be the support of the mixtures and marginals of $\PP$ and $\QQ$ 
on $\XX$ and $\YY$ are the mixtures.

Assume for any $(u,v)\in (\UU\times\VV)$ we are given  Markov transitions $\BB_{u,v}(x,dy)$ from $\XX$ into $\YY$ as well as a Markov transition $J(u,dv)$ from $\UU$ into $\VV$ so that 
\begin{equation}\label{eq:bridge_prop_B}
\int_{x\in \XX}P(u,dx)\BB_{u,v}(x,dy)=Q(v,dy)
\quad\mbox{\rm and}\quad (\imath J)(dv):=\int_{u\in \UU} \imath(du)J(u,dv)=\jmath(dv).
\end{equation}
In other words, the probability distribution $\pi$ on $(\UU\times\VV)$ defined by
\begin{equation}\label{pi-def}
\pi(d(u,v)):=\imath(du)J(u,dv)
\end{equation}
is a bridge from $\imath(du)$ to $\jmath(dv)$.
In addition, for any $(u,v)\in(\UU\times\VV)$ the probability distribution $\Pi((u,v),\point)$ on $(\XX\times\YY)$ defined by
\begin{equation}\label{Pi-def}
\Pi((u,v),d(x,y)):=P(u,dx)~ \BB_{u,v}(x,dy)
\end{equation}
is a bridge from $P(u,dx)$ to $Q(v,dy)$.
We associate with these objects the Markov transition $\MM$ from
$(\UU\times\XX)$ into $(\VV\times\YY)$ defined by the product transition formula
$$
\MM((u,x),d(v,y)):=J(u,dv)~\BB_{u,v}(x,dy).
$$
By construction, we have
$$
\begin{array}{l}
\PP(d(u,x))~\MM((u,x),d(v,y))=\imath(du)J(u,dv)~P(u,dx)\BB_{u,v}(x,dy)
 \Longrightarrow \PP\, \MM=\QQ
 \end{array}
$$
that is,  the probability distribution $(\PP\times\MM)$ is bridge from $\PP$ to $\QQ$. 
We have the bridge product formula
\begin{eqnarray*}
\PP(d(u,x))~\MM((u,x),d(v,y))
&=&\pi(d(u,v))~\Pi((u,v),d(x,y)).
\end{eqnarray*}

\subsection{Marginal Bridges}\label{sec:marg}

The marginal distributions $p$ and $q$ of $\PP$ and $\QQ$ w.r.t.~the second coordinates  (i.e.~on $\XX$ and $\YY$ respectively) are defined by 
$$
p(dx)=(\imath P)(dx):=\int_{u\in \UU}\imath(du)P(u,dx)\quad\mbox{\rm and}\quad
q(dy)=(\jmath Q)(dy):=\int_{v\in \VV}\jmath (dv)Q(v,dy).
$$
$p$ and $q$ are the mixture targets: for example if $\XX=\mathbb{R}^d$ and $\YY=\mathbb{R}^d$
and $\UU=\{1,\dots,k_U\}$,  $\VV=\{1,\dots,k_V\}$,  $p$ and $q$ could be finite mixtures of Gaussian probability measures.

We will assume  that for any $(u,v)\in (\UU\times\VV)$ we have 
$$
P(u,dx)\ll (\imath P)(dx)
\quad\mbox{\rm and}\quad Q(v,dy)\ll (\jmath Q)(dy).
$$ 
In this scenario,  we have the Bayes' formula
$$
\PP(d(u,x))=p(dx)~P_{\imath}(x,du)
\quad\mbox{\rm and}\quad
\QQ(d(v,y))=q(dy)~Q_{\jmath}(y,dv)
$$
with  the conditional distributions
$$
P_{\imath}(x,du):=\frac{\imath(du)~P(u,dx)}{\int~ \imath(dw)~P(w,dx)}
\quad\mbox{\rm and}\quad
Q_{\jmath}(y,dv):=\frac{\jmath(dv)Q(v,dy)}{\int~\jmath(dw)~Q(w,dy)}.
$$
In standard probabilistic terms,  one can think of $P_{\imath}(x,du)$ as the conditional distribution of the mixture
index $u$ given the state $x$;  a similar interpretation can be found for $Q_{\jmath}(y,dv)$. 

We set
$$
M(x,dy):=\int_{(u,v)\in(\UU\times\VV)}~P_{\imath}(x,du)~\MM((u,x),d(v,y))
$$
and note that
\begin{eqnarray*}
p(dx)~M(x,dy)
&=&\int_{(u,v)\in(\UU\times\VV)}\PP(d(u,x))~\MM((u,x),d(v,y)).
\end{eqnarray*}
This implies that
\begin{eqnarray*}
(p M)(dy)&:=&\int_{x\in\XX}~p(dx)~M(x,dy)=\int_{v\in\VV}
\int_{(u,x)\in (\UU\times \XX)}\PP(d(u,x))~\MM((u,x),d(v,dy))\\
&=&\int_{v\in \VV} \QQ(d(v,y))=\int_{v\in\VV} \jmath(dv) Q(v,dy)=:(\jmath Q)(dy)=q(dy)
\end{eqnarray*}
from which we check that
$$
p M=q.
$$
The kernel $M$ constitutes moving the index $u$ according to the conditional $P_{\imath}$ and then 
the conditional on the pair $(u,x)$ on samples the kernel $\MM$.  If $x$ were originally sampled from $p$ and then one sampled $M$,  the resulting probability measure on the space $\YY$ is exactly $q$.
This is an example of a bridge between mixtures and from a practical perspective requires (at least)
$p$ and $q$ to be known and one to be able to sample from $P_{\imath}$.

\subsection{Variational Formulation}\label{sec:vari_form}

Consider some reference Markov transition of the form
$$
R((u,v),d(x,y)):=P(u,dx) K(x,dy)
$$
and note that
$$
\Ha\left(\Pi((u,v),\point)~|~R((u,v),\point)\right)=\int~P(u,dx)~\Ha\left(\BB_{u,v}(x,\point)~|~K(x,\point)\right).
$$

We further assume that for $\pi$-almost every $(u,v)\in (\UU\times\VV)$ we have
$$
\Pi((u,v),\point)=\argmin
\Ha\left(T((u,v),\point)~|~R((u,v),\point)\right)
$$
where the infimum is taken over all coupling Markov transitions $T((u,v),d(x,y))$ such that
\begin{equation}\label{ent-1}
\int_{y\in\YY}T((u,v),d(x,y))=P(u,dx) 
\quad\mbox{\rm and}\quad
\int_{x\in\XX}T((u,v),d(x,y))=Q(v,dy) 
\end{equation}
In this context we have the following variational formula.

\begin{prop}\label{prop:var}
We have that
$$
\Ha\left(\pi\times \Pi~|~\pi\times R\right)=\inf
\Ha\left(\pi\times T~|~\pi\times R\right)
$$
where the infimum is taken over all coupling Markov transitions $T$ such that
(\ref{ent-1}) hold for any $(u,v)\in (\UU\times\VV)$.
\end{prop}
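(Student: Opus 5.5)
The plan is to use the chain rule for relative entropy on the product space $(\UU\times\VV)\times(\XX\times\YY)$. Both measures $\pi\times T$ and $\pi\times R$ share the same first marginal $\pi$ on $(\UU\times\VV)$. The chain rule, or disintegration formula, therefore gives
$$
\Ha\left(\pi\times T~|~\pi\times R\right)=\int \pi(d(u,v))~\Ha\left(T((u,v),\point)~|~R((u,v),\point)\right),
$$
valid in $[0,\infty]$ whenever the left-hand side is finite or infinite. The first step is to justify this identity: on the set where $T((u,v),\point)\ll R((u,v),\point)$ for $\pi$-a.e. $(u,v)$, the Radon--Nikodym derivative of $\pi\times T$ with respect to $\pi\times R$ is the measurable version of $dT((u,v),\point)/dR((u,v),\point)$, so the $\log$-integral splits by Fubini. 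Otherwise both sides are $+\infty$. For this we assume standard Borel state spaces, so that densities of kernels can be chosen jointly measurable.

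The second step is the pointwise minimisation. For every admissible $T$, i.e. every $T$ satisfying (\ref{ent-1}) for each $(u,v)$, the section $T((u,v),\point)$ is a coupling of $P(u,\point)$ and $Q(v,\point)$. By the standing assumption, for $\pi$-a.e. $(u,v)$,
$$
\Ha\left(\Pi((u,v),\point)~|~R((u,v),\point)\right)\leq \Ha\left(T((u,v),\point)~|~R((u,v),\point)\right).
$$
Integrating against $\pi$ and applying the chain rule on both sides gives $\Ha(\pi\times\Pi|\pi\times R)\le \Ha(\pi\times T|\pi\times R)$. It remains to note that $\Pi$ itself is admissible. By (\ref{Pi-def}), the first marginal of $\Pi((u,v),\point)$ is $P(u,dx)$. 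By (\ref{eq:bridge_prop_B}), the second marginal is $\int P(u,dx)\BB_{u,v}(x,dy)=Q(v,dy)$. Hence the infimum is attained at $\Pi$, and equality holds.

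The main obstacle is measure-theoretic bookkeeping rather than anything conceptual. The first issue is that the argmin assumption only holds $\pi$-a.e. This is harmless, because it sits under a $\pi$-integral. The second issue is that the map $(u,v)\mapsto \Ha(T((u,v),\point)|R((u,v),\point))$ must be measurable so the integral makes sense. I would handle this via the Donsker--Varadhan, or partition, supremum representation of entropy over a countable family, which makes the map a countable supremum of measurable functions. Alternatively, I would invoke the standard chain rule for kernels on Polish spaces directly. One could also remark that the identity $\Ha(\Pi((u,v),\point)|R((u,v),\point))=\int P(u,dx)\Ha(\BB_{u,v}(x,\point)|K(x,\point))$ stated before the proposition is the same chain rule applied one level down. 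It is not needed for the proof, but it shows the minimal value equals $\int\pi(d(u,v))\int P(u,dx)\,\Ha(\BB_{u,v}(x,\point)|K(x,\point))$.
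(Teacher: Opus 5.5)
Your proposal is correct and follows the same route as the paper: you disintegrate $\Ha(\pi\times T\,|\,\pi\times R)$ as the $\pi$-average of the fiberwise entropies $\Ha(T((u,v),\cdot)\,|\,R((u,v),\cdot))$, and you then use the assumed fiberwise optimality of $\Pi$, which applies because the constraints (\ref{ent-1}) are pointwise in $(u,v)$. Your additional checks are details the paper's short proof leaves implicit: admissibility of $\Pi$ via (\ref{eq:bridge_prop_B}), the harmlessness of the $\pi$-a.e.\ qualifier, and measurability of $(u,v)\mapsto\Ha(T((u,v),\cdot)\,|\,R((u,v),\cdot))$.
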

\begin{proof}
 Note that
\begin{eqnarray*}
\Ha\left(\pi\times \Pi~|~\pi\times R\right)&=&\int_{(u,v)\in (\UU\times\VV)}
\pi(d(u,v))~\Ha\left(\Pi((u,v),\point)~|~R((u,v),\point)\right)
\end{eqnarray*}
holds for any coupling Markov transition in place of $\Pi$. The claim follows
since the marginal constraints (\ref{ent-1}) are pointwise in $(u,v)$, so that
the integral is minimized by minimizing each integrand separately, and each
integrand is minimized by $\Pi((u,v),\point)$ by assumption.
\end{proof}

\section{Gaussian Mixture Bridges}\label{sec:gauss-mix}

We now consider the ideas of the previous section in the context of Gaussian mixtures. 
As we have remarked above,  the ideas we present are only of use if $p$,  $q$ and so on are available.
In practice,  especially in generative AI,  these probabilities must be estimated in some way (and in addition
parts of the bridge $\BB_{u,v}$).  To that end,   the main result of the section is a continuity result (Theorem \ref{thm:conti}) which 
provides a bound on the 2-Wasserstein distance between $\mathbb{P}\times\mathbb{M}$
and the case that each probability has been estimated,  which is called $\widehat{\mathbb{P}}\times
\widehat{\mathbb{M}}$.  This upper-bound is characterized by an averaged 2-Wasserstein distance between
the true and estimated Gaussian components and an averaged 2-Wasserstein distance between
the true and estimated Gaussian bridge;  the details are now to follow.  Note that in our analysis we suppose
that $\imath(du)$ and $\jmath(dv)$ are given probability measures.  We discuss the case that these probabilities are estimated,  in some way,  at the end of the section.

\subsection{Schr\" odinger Gaussian Bridges}

This formulation follows the exposition of \cite{adm-24} and \cite[Section 5.4]{sinkhorn_rev}
and significant further insight can be found by reading those articles.  We simply present the elements that are relevant for this paper.  
Consider 
a linear Gaussian reference Markov transition
$K_{\theta}(x,dy)$ from $\XX=\RR^d$ into $\YY=\RR^d$ indexed by some parameter $\theta=(\alpha,\beta,\tau)\in\Theta=\mathbb{R}^{d}\times\mathbb{R}^{d\times d}\times\mathcal{S}_d^+$.   That is,  for
$G$ a $d-$dimensional Gaussian random variable with zero mean and identity covariance,  one can write the transition $K_{\theta}(x,\cdot)$ as $Y(x) = \alpha + \beta x + \tau^{1/2}G$.
Consider the Gaussian Markov transitions
$$
P(u,dx)=\nu_{m_u,\sigma_u}(dx)\quad
\mbox{\rm and}\quad
Q(v,dy)=\nu_{n_v,\varsigma_v}(dy)
$$ 
associated with some maps
$$
(m,\sigma)~:~u\in \UU\mapsto (m_u,\sigma_u)\in (\RR^d\times \Sa_d^+)\quad\mbox{\rm and}\quad
(n_v,\varsigma_v)~:~v\in \VV\mapsto (n_v,\varsigma_v)\in (\RR^d\times \Sa_d^+).
$$
We also consider the reference transition
$$
R_{\theta}((u,v),d(x,y)):=\nu_{m_u,\sigma_u}(dx) K_{\theta}(x,dy)
$$
and the collection of Gaussian Schr\" odinger bridge transitions $\BB_{u,v}$ indexed by $(u,v)\in (\UU\times\VV)$  defined by (and will satisfy the left equation in \eqref{eq:bridge_prop_B})
$$
\BB_{u,v}(x,dy)=\mbox{\rm Prob}\left(Y_{u,v}(x)\in dy\right)
$$
with the random maps
$$
Y_{u,v}(x)~:=~n_v+\kappa_{u,v}~ \cchi~(x-m_u)+
 \kappa_{u,v}^{1/2}~G\quad \mbox{\rm with}\quad
 \cchi:=\tau^{-1}\beta.
 $$
In the above display, $\kappa_{u,v}$ denote the rescaled matrices
$$
\kappa_{u,v}:=\varsigma_v^{1/2}~r_{u,v}~\varsigma_v^{1/2}
$$
and $r_{u,v}$ is the positive definite Riccati fixed point
\begin{eqnarray*}
r_{u,v}&=&-\frac{\varpi_{u,v}}{2}+\left(\varpi_{u,v}+\left(\frac{\varpi_{u,v}}{2}\right)^2\right)^{1/2}\\
&=&\mbox{\rm Ricc}_{\varpi_{u,v}}(r_{u,v}):=(I+(\varpi_{u,v}+r_{u,v})^{-1})^{-1}\quad\mbox{\rm with}\quad \varpi_{u,v}^{-1}:=\varsigma_v^{1/2}~(\cchi~\sigma_u~  \cchi^{\prime})~\varsigma_v^{1/2}.
\end{eqnarray*}
Note that,  in the context of \eqref{eq:bridge_prop_B},  one has
\begin{eqnarray*}
\nu_{m_u,\sigma_u}\BB_{u,v}=\nu_{n_v,\varsigma_v}&\Longleftrightarrow&
(\kappa_{u,v}~ \cchi)~\sigma_u~(\kappa_{u,v}~ \cchi)^{\prime}+
\kappa_{u,v}=\varsigma_v \\
&\Longleftrightarrow&
r_{u,v}~\varpi_{u,v}^{-1}~r_{u,v}+ r_{u,v}=I.
\end{eqnarray*}

For any $(u,v)\in (\UU\times\VV)$ we recall the Schr\" odinger Gaussian bridges are defined by
$$
\left(\nu_{m_u,\sigma_u}\times~\BB_{u,v}\right)=\argmin
\Ha\left(T((u,v),\point)~|~R_{\theta}((u,v),\point)\right)
$$
where the infimum is taken over all coupling Markov transitions $T((u,v),d(x,y))$ such that
\begin{equation}\label{ent-2}
\int_{y\in\YY}T((u,v),d(x,y))=\nu_{m_u,\sigma_u}(dx)
\quad\mbox{\rm and}\quad
\int_{x\in\XX}T((u,v),d(x,y))=\nu_{n_v,\varsigma_v}(dy).
\end{equation}
This result is surveyed in \cite[Theorem 5.8]{sinkhorn_rev} and was originally proved in \cite{adm-24}.
The marginal distributions $p$ and $q$ of $\PP$ and $\QQ$ w.r.t.~the second coordinates are defined by  the Gaussian mixture distributions
$$
p(dx)=\int_{u\in \UU}\imath(du)~\nu_{m_u,\sigma_u}(dx) \quad\mbox{\rm and}\quad
q(dy)=\int_{v\in \VV}\jmath (dv)~\nu_{n_v,\varsigma_v}(dy).
$$


\subsection{Regularity Condition}

Let $(\UU,\rho_{\UU})$ and $(\VV,\rho_{\VV})$ be metric spaces.  We  equip the set $\XX_d:=(\UU\times\RR^d)$ and $\YY_d:=(\VV\times\RR^d)$ 
with the metrics defined for any $x_d:=(u,x), \overline{x}_d:(\overline{u},\overline{x})\in\XX_d$ and for any $y_d:=(v,y), \overline{y}_d:=(\overline{v},\overline{y})\in\YY_d$ by
$$
\rho_{\XX_d}(x_d,\overline{x}_d)^2:=\rho_{\UU}\left(u,\overline{u}\right)^2+\Vert x-\overline{x}\Vert^2
\quad\mbox{\rm and}\quad
\rho_{\YY_d}(y_d,\overline{y}_d)^2:=\rho_{\VV}\left(v,\overline{v}\right)^2+\Vert y-\overline{y}\Vert^2.
$$
We also let $\rho$ be the metric  on $\left(\XX_d\times \YY_d\right)$ defined by
$$
\rho\left((x_d,y_d),(\overline{x}_d,\overline{y}_d)\right)^2=
\rho_{\XX_d}(x_d,\overline{x}_d)^2+\rho_{\YY_d}(y_d,\overline{y}_d)^2.
$$
We consider the following local continuity condition.

\begin{hypH}\label{ass:cont_cond}
There exists $a\geq 0$ such that for any $x_d\in\XX_d$
and $\overline{x}_d\in\XX_d$ we have that
\begin{equation}\label{continuity-cond}
\Da_2\left(\MM(x_d,\point),\MM(\overline{x}_d,\point)\right)\leq a~\rho_{\XX_d}(x_d,\overline{x}_d).
\end{equation}
\end{hypH}

\begin{examp}\label{ex}
Assume we have a $2$-Wasserstein coupling 
$
C((u,\overline{u}),d(v,\overline{v}))
$
with marginals $J(u,dv)$ and $J(\overline{u},d\overline{v})$. For instance, we have
$$
J(u,dv):=\jmath(dv)
\Longrightarrow
C((u,\overline{u}),d(v,\overline{v}))=\jmath(dv)\delta_v(d\overline{v})
$$
In addition,   consider some $2$-Wasserstein coupling 
$$
\CC_{(u,v),(u,\overline{v})}((x,\overline{x}),d(y,\overline{y}))
\quad \mbox{with marginals }\quad
\BB_{u,v}(x,dy)\quad \mbox{and}\quad
\BB_{u,\overline{v}}(\overline{x},d\overline{y}).
$$
In this case, the product of transitions
$$
C((u,\overline{u}),d(v,\overline{v}))~\CC_{(u,v),(u,\overline{v})}((x,\overline{x}),d(y,\overline{y}))
$$
is a coupling with marginals
$$ 
\MM((u,x),d(v,y))
\quad\mbox{\rm and}\quad
\MM((\overline{u},\overline{x}),d(\overline{v},\overline{y})).
$$
This implies that
$$
\begin{array}{l}
\Da_2\left(\MM((u,x),\point), \MM((\overline{u},\overline{x}),\point)\right)^2\\
\\
\displaystyle\leq
\int~C((u,\overline{u}),d(v,\overline{v}))~\CC_{(u,v),(u,\overline{v})}((x,\overline{x}),d(y,\overline{y}))~\left(\rho_{\VV}(v,\overline{v})^2+\Vert y-\overline{y}\Vert^2\right)\\
\\
\displaystyle=\Da_2\left(J(u,\point),J(\overline{u},\point)\right)^2+
\int~C((u,\overline{u}),d(v,\overline{v}))~\Da_2\left(\BB_{u,v}(x,\point),\BB_{u,\overline{v}}(\overline{x},\point)\right)^2.
\end{array}$$
By (\ref{burg-def}) we have
$$
\Da_2\left(\BB_{u,v}(x,\point),\BB_{u,v}(\overline{x},\point)\right)^2=\Vert \kappa_{u,v}~ \cchi~(x-\overline{x})\Vert_2^2\leq \Vert \kappa_{u,v}\Vert_2^2~\Vert\cchi\Vert_2^2~ \Vert x-\overline{x}\Vert_2^2.
$$
For instance one has
$$
\begin{array}{l}
J(u,dv):=\jmath(dv)\\
\\
\displaystyle\Longrightarrow
\Da_2\left(\MM((u,x),\point), \MM((\overline{u},\overline{x}),\point)\right)^2\leq
\int~\jmath(dv)~\Da_2\left(\BB_{u,v}(x,\point),\BB_{u,v}(\overline{x},\point)\right)^2.
\end{array}$$
In this scenario 
$$
\vert \kappa\vert_{\jmath}^2:=\sup_{u\in\UU}\int~\jmath(dv)~\Vert \kappa_{u,v}\Vert_2^2\Longrightarrow
(\ref{continuity-cond})\quad \mbox{\rm with}\quad a=
\Vert\cchi\Vert_2~\vert \kappa\vert_{\jmath}.
$$
\end{examp}

\subsection{A Continuity Theorem}

Let $(\widehat{p},\widehat{q},\widehat{P},\widehat{Q},\widehat{P}_{\imath},\widehat{Q}_{\jmath},\widehat{\BB}_{u,v})$ be the mathematical objects defined as $(p,q,P,Q,P_{\imath},Q_{\jmath},\BB_{u,v})$ by replacing the mappings $u\mapsto(m_u,\sigma_u)$ and 
$v\mapsto (n_v,\varsigma_v)$ by an unspecified estimator $u\mapsto (\widehat{m}_u,\widehat{\sigma}_u)$ and $v\mapsto(\widehat{n}_v,\widehat{\varsigma}_v)$. In this notation we have
\begin{eqnarray*}
\widehat{\PP}(d(u,x))&:=&\imath(du)~\widehat{P}(u,dx)=\imath(du)~\nu_{\widehat{m}_u,\widehat{\sigma}_u}(dx)\\
\widehat{\QQ}(d(v,y))&:=&\jmath(dv)~\widehat{Q}(v,dy)=\jmath(dv)~\nu_{\widehat{n}_v,\widehat{\varsigma}_v}(dy).
\end{eqnarray*}
We also consider the Markov transition $\widehat{\MM}$ from
$(\UU\times\RR^d)$ into $(\VV\times\RR^d)$ defined by
$$
\widehat{\MM}((u,x),d(v,y)):=J(u,dv)~\widehat{\BB}_{u,v}(x,dy)\Longrightarrow
\widehat{\PP}\,\widehat{\MM}=\widehat{\QQ}
$$
as well as the Markov transition
$$
\widehat{M}(x,dy):=\int_{(u,v)\in(\UU\times\VV)}~\widehat{P}_{\imath}(x,du)~\widehat{\MM}((u,x),d(v,dy))
\Longrightarrow
\widehat{p}\,\widehat{M}=\widehat{q}.
$$
We have the bridge product formula
\begin{eqnarray*}
\widehat{\PP}(d(u,x))~\widehat{\MM}((u,x),d(v,y))
&=&\pi(d(u,v))~\widehat{\Pi}((u,v),d(x,y))
\end{eqnarray*}
with the bridge $\pi\in\Ca(\imath,\jmath)$  defined in (\ref{pi-def}) and the collection of bridges
$$\widehat{\Pi}((u,v),\point)\in\Ca\left(\nu_{\widehat{m}_u,\widehat{\sigma}_u},\nu_{\widehat{n}_v,\widehat{\varsigma}_v}\right)$$ defined by
$$
\widehat{\Pi}((u,v),d(x,y)):=\nu_{\widehat{m}_u,\widehat{\sigma}_u}(dx)~ \widehat{\BB}_{u,v}(x,dy).
$$
The following result is a consequence of Lemma~\ref{Lem-Tex}.
\begin{theo}\label{thm:conti}
Assume (H\ref{ass:cont_cond}). Then we have that
\begin{eqnarray*}
\displaystyle\Da_2\left(\widehat{\PP}\times\widehat{\MM},\PP\times\MM\right) & \leq &
(1+a^2)^{1/2}~\left(\int~\imath(du)~\Da_2\left(\nu_{\widehat{m}_u,\widehat{\sigma}_u},\nu_{m_u,\sigma_u}\right)^2
\right)^{1/2}
+ \\
& & 
\left(\int
\pi(d(u,v))~\nu_{\widehat{m}_u,\widehat{\sigma}_u}(dx)~\Da_2\left(\widehat{\BB}_{u,v}(x,\point),{\BB}_{u,v}(x,\point)\right)^{2}\right)^{1/2}.
\end{eqnarray*}
\end{theo}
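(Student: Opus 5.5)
We will apply Lemma~\ref{Lem-Tex} on the extended spaces $\XX_d=(\UU\times\RR^d)$ and $\YY_d=(\VV\times\RR^d)$, equipped with the metrics $\rho_{\XX_d}$, $\rho_{\YY_d}$ and $\rho$. We take $\mu=\widehat{\PP}$, $K=\widehat{\MM}$, $\overline{\mu}=\PP$ and $\overline{K}=\MM$. The Lipschitz hypothesis (\ref{est-hyp}) of the lemma, required for the kernel $\overline{K}=\MM$, is exactly (H\ref{ass:cont_cond}) with the same constant $a$. The lemma then gives
$$
\Da_2\left(\widehat{\PP}\times\widehat{\MM},\PP\times\MM\right)\leq \sqrt{1+a^2}~\Da_2(\widehat{\PP},\PP)+\left(\int \widehat{\PP}(d(u,x))~\Da_2\left(\widehat{\MM}((u,x),\point),\MM((u,x),\point)\right)^2\right)^{1/2}.
$$
It remains to bound the two terms on the right-hand side.

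For the first term, we construct a coupling of $\widehat{\PP}$ and $\PP$ that shares the $u$-coordinate. For each $u$, let $\gamma_u$ be an optimal coupling of $\nu_{\widehat{m}_u,\widehat{\sigma}_u}$ and $\nu_{m_u,\sigma_u}$; it can be taken to be the Gaussian Monge coupling (\ref{eq:monge}), which depends measurably on the parameters. The measure $\imath(du)\delta_u(d\overline{u})\gamma_u(d(x,\overline{x}))$ couples $\widehat{\PP}$ and $\PP$. Since $\rho_{\UU}(u,u)=0$, its cost is $\int\imath(du)\Da_2(\nu_{\widehat{m}_u,\widehat{\sigma}_u},\nu_{m_u,\sigma_u})^2$, which is the first term of the theorem.

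For the second term, fix $(u,x)$. Both $\widehat{\MM}((u,x),\point)$ and $\MM((u,x),\point)$ have the same first factor $J(u,dv)$. We therefore use the coupling $J(u,dv)\delta_v(d\overline{v})\,\gamma_{u,v,x}(d(y,\overline{y}))$, where $\gamma_{u,v,x}$ is an optimal coupling of $\widehat{\BB}_{u,v}(x,\point)$ and $\BB_{u,v}(x,\point)$. Again the $\VV$-distance vanishes, so
$$
\Da_2\left(\widehat{\MM}((u,x),\point),\MM((u,x),\point)\right)^2\leq \int J(u,dv)~\Da_2\left(\widehat{\BB}_{u,v}(x,\point),\BB_{u,v}(x,\point)\right)^2.
$$
We integrate this against $\widehat{\PP}(d(u,x))=\imath(du)\nu_{\widehat{m}_u,\widehat{\sigma}_u}(dx)$ and use $\imath(du)J(u,dv)=\pi(d(u,v))$. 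This yields exactly the second term of the theorem.

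The only delicate step is the measurability of the selections of optimal couplings, $u\mapsto\gamma_u$ and $(u,v,x)\mapsto\gamma_{u,v,x}$. Because all kernels involved are Gaussian, explicit Monge-map couplings are available and depend continuously on the parameters $(m_u,\sigma_u)$, $(n_v,\varsigma_v)$, $\kappa_{u,v}$ and $x$, so measurability follows from that of the parameter maps. Alternatively, one can invoke a standard measurable selection theorem for optimal plans.
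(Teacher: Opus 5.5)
Your proposal is correct and follows the paper's route. The paper states the theorem as a direct consequence of Lemma~\ref{Lem-Tex}, applied with $\mu=\widehat{\PP}$, $K=\widehat{\MM}$, $\overline{\mu}=\PP$, $\overline{K}=\MM$ so that (H\ref{ass:cont_cond}) supplies the Lipschitz hypothesis. It then controls $\Da_2(\widehat{\PP},\PP)$ and $\Da_2(\widehat{\MM},\MM)$ by couplings that share the label coordinate, exactly as you do (compare the proof of Corollary~\ref{cor:unknown}), with $\imath(du)J(u,dv)=\pi(d(u,v))$ giving the second term. Your remark on measurable selection of the optimal couplings is a point the paper leaves implicit.
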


\subsubsection{The Case of Unknown Weights}\label{sec:unknown_weights}

If the probabilities $\imath$  and $\jmath$ are unknown (unknown weights) one can extend Theorem \ref{thm:conti} in a natural way.
	We write the estimates of $\imath$, $\jmath$, $J$ and $\pi$ as $\widehat{\imath}$, $\widehat{\jmath}$, $\widehat{J}$ and $\widehat{\pi}$ respectively.  In this context, $\widehat{\PP}(d(u,x))=\widehat{\imath}(du)\widehat{P}(u,dx)$ and $\widehat{\MM}((u,x),d(v,y))=\widehat{J}(u,dv)\widehat{\BB}_{u,v}(x,dy)$.
 We assume an additional analogue of (H\ref{ass:cont_cond}):
\begin{hypH}\label{ass:cont_cond_2}
There exists $a\geq 0$ such that for any $x_d\in\XX_d$, $\overline{x}_d\in\XX_d$ and $(v,\overline{v})\in\VV^2$  we have that
\begin{eqnarray}
\Da_2\left(P(u,\point),P(\overline{u},\point)\right) & \leq & a~\rho_{\UU}(u,\overline{u}) \label{H22}\\
\Da_2\left(\BB_{(u,v)}(x,\point),\BB_{(u,\overline{v})}(x,\point)\right) & \leq & a~\rho_{\VV}(v,\overline{v}).\label{H23}
\end{eqnarray}
\end{hypH}
For example, when $\UU,\VV$ are both finite sets, one can easily check \eqref{H22} and \eqref{H23} hold. We have the following result.

\begin{cor}\label{cor:unknown}
    Assume (H\ref{ass:cont_cond}-\ref{ass:cont_cond_2}). Then we have that
    \begin{equation}\label{ext}
\begin{array}{l}
\Da_2(\widehat{\PP}\times \widehat{\MM},\PP\times\MM)\le 
(1+a^2)^{1/2}~\left(\int~\widehat{\imath}(du)~\Da_2\left(\nu_{\widehat{m}_u,\widehat{\sigma}_u},\nu_{m_u,\sigma_u}\right)^2
\right)^{1/2} \\
+ 
\sqrt{2}(1+a^2)^{1/2}\left(\int \widehat{\imath}(du) \Da_2\bigl(\widehat{J}(u,\point),
J(u,\point)\bigr)^2\right)^{1/2}
 +
(1+a^2)\Da_2\left(\widehat{\imath},\imath\right)\\ 
+ 
\sqrt{2}\left(\int
\widehat{\pi}(d(u,v))~\nu_{\widehat{m}_u,\widehat{\sigma}_u}(dx)~\Da_2\left(\widehat{\BB}_{u,v}(x,\point),{\BB}_{u,v}(x,\point)\right)^{2}\right)^{1/2}.
\end{array}
\end{equation}
\end{cor}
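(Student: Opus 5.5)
We plan to apply Lemma~\ref{Lem-Tex} twice, in the same way Theorem~\ref{thm:conti} presumably follows from it, but now with the weight perturbation included. The first application treats $\PP\times\MM$ as $\mu\times K$ with $\mu=\widehat{\PP}$ or $\PP$ on $\XX_d$ and $K=\widehat{\MM}$ or $\MM$. Taking $\overline{\mu}=\PP$ and $\overline{K}=\MM$, hypothesis (H\ref{ass:cont_cond}) gives \eqref{est-hyp} with constant $a$, and we obtain
$$
\Da_2(\widehat{\PP}\times \widehat{\MM},\PP\times\MM)\le \sqrt{1+a^2}\,\Da_2(\widehat{\PP},\PP)+\Big(\int \widehat{\PP}(d(u,x))\,\Da_2\big(\widehat{\MM}((u,x),\point),\MM((u,x),\point)\big)^2\Big)^{1/2}.
$$

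The second step bounds $\Da_2(\widehat{\PP},\PP)$, where $\widehat{\PP}=\widehat{\imath}\times\widehat{P}$ and $\PP=\imath\times P$. We apply Lemma~\ref{Lem-Tex} again on $\UU\times\RR^d$ with $\overline{K}=P$; by \eqref{H22} the Lipschitz constant is $a$. This yields
$$
\Da_2(\widehat{\PP},\PP)\le \sqrt{1+a^2}\,\Da_2(\widehat{\imath},\imath)+\Big(\int\widehat{\imath}(du)\,\Da_2(\nu_{\widehat{m}_u,\widehat{\sigma}_u},\nu_{m_u,\sigma_u})^2\Big)^{1/2}.
$$
Multiplying by $\sqrt{1+a^2}$ produces the first and third terms of \eqref{ext}.

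The third step handles the kernel term. For fixed $(u,x)$, we compare $\widehat{J}(u,dv)\widehat{\BB}_{u,v}(x,dy)$ with $J(u,dv)\BB_{u,v}(x,dy)$ through the intermediate measure $\widehat{J}(u,dv)\BB_{u,v}(x,dy)$ and the bound $(s+t)^2\le 2s^2+2t^2$. This doubling is where the factors $\sqrt{2}$ in \eqref{ext} come from.
\begin{itemize}
\item The distance between $\widehat{J}\times\widehat{\BB}$ and $\widehat{J}\times\BB$ is controlled by the synchronous coupling in $v$. It is at most $\big(\int\widehat{J}(u,dv)\,\Da_2(\widehat{\BB}_{u,v}(x,\point),\BB_{u,v}(x,\point))^2\big)^{1/2}$.
\item The distance between $\widehat{J}\times\BB$ and $J\times\BB$ is bounded using Lemma~\ref{Lem-Tex} on $\VV\times\RR^d$ with $x$ frozen, with Lipschitz constant $a$ coming from \eqref{H23}. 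It gives $\sqrt{1+a^2}\,\Da_2(\widehat{J}(u,\point),J(u,\point))$, with no second term because the kernels coincide.
\end{itemize}
After squaring, we integrate against $\widehat{\PP}(d(u,x))=\widehat{\imath}(du)\nu_{\widehat{m}_u,\widehat{\sigma}_u}(dx)$. We use $\widehat{\imath}(du)\widehat{J}(u,dv)=\widehat{\pi}(d(u,v))$ and take the square root with $\sqrt{s+t}\le\sqrt{s}+\sqrt{t}$. This gives the second and fourth terms of \eqref{ext}.

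The main obstacle is the bookkeeping of constants. We need to make sure that \eqref{H23}, stated for fixed $x$, is exactly the Lipschitz property in $v$ that Lemma~\ref{Lem-Tex} requires for the frozen kernel $v\mapsto\BB_{u,v}(x,\point)$. We also need the intermediate measure to be chosen so that the $\widehat{\BB}$ versus $\BB$ term ends up integrated against $\widehat{\pi}$ rather than $\pi$. Finally, measurability of the couplings in $(u,x)$ should be checked; it can be handled by a standard measurable selection of optimal couplings.
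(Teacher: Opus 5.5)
Your proposal is correct and follows the paper's proof step for step. It applies Lemma~\ref{Lem-Tex} under (H\ref{ass:cont_cond}) to the outer product and again under \eqref{H22} to bound $\Da_2(\widehat{\PP},\PP)$; for the kernel term it applies Lemma~\ref{Lem-Tex} under \eqref{H23}, then $(s+t)^2\le 2s^2+2t^2$, and finally $\sqrt{s+t}\le\sqrt{s}+\sqrt{t}$ after integrating against $\widehat{\imath}(du)\widehat{J}(u,dv)=\widehat{\pi}(d(u,v))$. Your intermediate measure $\widehat{J}(u,dv)\BB_{u,v}(x,dy)$ is simply the proof of Lemma~\ref{Lem-Tex} written out with $\mu=\widehat{J}(u,\point)$ and $\overline{K}(v,\point)=\BB_{u,v}(x,\point)$, whereas the paper cites the lemma directly.
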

\begin{proof}
    By (H\ref{ass:cont_cond}) and Lemma \ref{Lem-Tex},
\[
\Da_2(\widehat{\PP}\times\widehat{\MM},\PP \times \MM)\le \sqrt{1+a^2}\Da_2(\widehat{\PP},\PP)+\left(\int \widehat{\PP}\Da_2(\widehat{\MM},\MM)^2\right)^{1/2}.
\]
By \eqref{H22} and Lemma \ref{Lem-Tex},
\[
\Da_2(\widehat{\PP},\PP)\le \sqrt{1+a^2}\Da_2(\widehat{\imath},\imath)+\left(\int\widehat{\imath}(du)\Da_2(\widehat{P}(u,\point),P(u,\point))^2 \right)^{1/2}.
\]
By \eqref{H23} and Lemma \ref{Lem-Tex}, using $(x+y)^2\le 2x^2+2y^2$,
\[
\Da_2(\widehat{\MM},\MM)^2 \le 2(1+a^2)\Da_2\left(\widehat{J}(u,\point),J(u,\point)\right)^2+2\int \widehat{J}(u,dv)\Da_2\left(\widehat{\BB}_{u,v}(x,\point),\BB_{u,v}(x,\point)\right)^2
\]
Combining the above results and using $\sqrt{x+y}\le \sqrt{x}+\sqrt{y}$ yields the bound
\begin{align*}
   \Da_2(\widehat{\PP}\times\widehat{\MM},\PP \times \MM)\le &(1+a^2)\Da_2(\widehat{\imath},\imath)+\sqrt{1+a^2}\left(\int \widehat{\imath}(du)\Da_2\bigl(\widehat{P}(u,\point),P(u,\point)\bigr)^2\right)^{1/2}\\
&+\sqrt{2(1+a^2)}\left(\int \widehat{\PP}(d(u,x))\Da_2\bigl(\widehat{J}(u,\point),J(u,\point)\bigr)^2\right)^{1/2}\\
&+\sqrt{2}\left(\int \widehat{\PP}(d(u,x))\widehat{J}(u,dv)\Da_2\bigl(\widehat{\BB}_{u,v}(x,\point),\BB_{u,v}(x,\point)\bigr)^2\right)^{1/2},
\end{align*}
which is exactly \eqref{ext}.
\end{proof}

 The remainder of this article primarily addresses the case of known weights,  but we do consider unknown weights in Section \ref{sec:gauss_mix_em}.

\section{Stability Under Parameter Perturbation}\label{sec:stability}

In order to utilize the upper-bound in Theorem \ref{thm:conti} we need to consider how one can control
the two terms.  The first of which $\int~\imath(du)~\Da_2\left(\nu_{\widehat{m}_u,\widehat{\sigma}_u},\nu_{m_u,\sigma_u}\right)^2$ relies on 2-Wasserstein distances of Gaussians,  which has been well-studied and can be dealt with.  The second term $$\int
\pi(d(u,v))~\nu_{\widehat{m}_u,\widehat{\sigma}_u}(dx)~\Da_2\left(\widehat{\BB}_{u,v}(x,\point),{\BB}_{u,v}(x,\point)\right)^{2}$$ is more challenging and requires the study of Riccati maps.  
To aid the stability in estimation, especially in higher-dimensions $d$,  we combine ideas with an $\epsilon-$inflated estimator. Our analysis culminates in Lemma \ref{lem:w2-gauss}, which provides a particular upper-bound on this afore-mentioned second term with an $\epsilon$ inflated estimator.  We then combine Lemma \ref{lem:w2-gauss} with standard results on 2-Wasserstein distances of Gaussians and to prove a more explicit upper-bound of the type of Theorem \ref{thm:conti} (Theorem \ref{theo-main} below).  For readers more interested in our results related to Gaussian mixtures,  one could skip the technical proofs in Section \ref{sec:ricc_maps}. 

\subsection{Riccati Maps and Inflated Inverses}\label{sec:ricc_maps}

Consider the Riccati matrix parameters
$$
\tau_{\beta}:=\beta^{-1} \tau\quad \mbox{\rm and}\quad
\omega(\sigma_u,\varsigma_v):=\psi(\sigma_u,\varsigma_v)^{-1}=\varsigma_v^{-1/2}~\tau_{\beta}^{\prime}~\sigma^{-1}_u~ \tau_{\beta}~\varsigma_v^{-1/2}
$$
with
$$
\psi(\sigma_u,\varsigma_v):=\varsigma_v^{1/2}~(\cchi~\sigma_u~  \cchi^{\prime})~\varsigma_v^{1/2}
\quad \mbox{\rm and}\quad
 \cchi:=\tau^{-1}\beta=\tau_{\beta}^{-1}.
$$
The $\epsilon$-inflated covariance
approximations $\omega_{\epsilon}(\widehat{\sigma}_u,\widehat{\varsigma}_v)$, with positive definite inverse maps,
are
$$
\omega_{\epsilon}(\sigma_u,\varsigma_v):=\psi_{\epsilon}(\sigma_u,\varsigma_v)^{-1}
\quad \mbox{\rm with}\quad
\psi_{\epsilon}(\sigma_u,\varsigma_v):=\psi(\sigma_u,\varsigma_v)+\epsilon I.
$$
Note that
\begin{equation}\label{vp-est}
\begin{array}{l}
\displaystyle\lambda_{\min}(\omega(\sigma_u,\varsigma_v))= \frac{1}{\lambda_{\max}(\psi(\sigma_u,\varsigma_v))}\leq \lambda_{\max}(\omega(\sigma_u,\varsigma_v))\leq \frac{1}{\lambda_{\min}(\psi(\sigma_u,\varsigma_v))}\\
\\
\displaystyle
\lambda_{\max}(\omega_{\epsilon}(\sigma_u,\varsigma_v))\leq \frac{1}{\epsilon+\lambda_{\min}(\psi(\sigma_u,\varsigma_v))}\leq  \frac{1}{\lambda_{\min}(\psi(\sigma_u,\varsigma_v))}.
\end{array}
\end{equation}

Consider the Riccati fixed point map
\begin{eqnarray*}
R(w)&:=&\left(w+\left(\frac{w}{2}\right)^2\right)^{1/2}-\frac{w}{2}=w^{1/2}~\left(I+\frac{w}{4}\right)^{1/2}-\frac{w}{2}
\end{eqnarray*}
and we recall that
\begin{equation}\label{ricc-infsup}
\frac{I}{1+1/\lambda_{\min}(w)}\leq 
(I+w^{-1})^{-1}\leq 
R(w)=\mbox{\rm Ricc}_{w}\left(R(w)\right)\leq I.
\end{equation}
Observe that
$$
\begin{array}{l}
R(w_1)-R(w_2)\\
\\
\displaystyle=\frac{1}{2}\left(w_2-w_1\right)+\left(w_1^{1/2}-w_2^{1/2}\right)~\left(I+\frac{w_1}{4}\right)^{1/2}+w_2^{1/2}~\left(\left(I+\frac{w_1}{4}\right)^{1/2}-\left(I+\frac{w_2}{4}\right)^{1/2}\right)
\end{array}
$$
from which one can deduce 
$$
\begin{array}{l}
\displaystyle\Vert R(w_1)-R(w_2)\Vert\le \left(\frac{1}{2}+\frac{\Vert\left(I+{w_1}/{4}\right)^{1/2}\Vert}{\lambda^{1/2}_{\rm min}(w_1)+\lambda^{1/2}_{\rm min}(w_2)}+\frac{1}{8}~\Vert w_2^{1/2}\Vert
\right)\Vert w_2-w_1\Vert.
\end{array}
$$
For instance, for the spectral norm we have
$$
\begin{array}{l}
\displaystyle\Vert R(w_1)-R(w_2)\Vert_2
\displaystyle\leq \frac{1}{2}~\left(1+\left(\frac{4+\lambda_{\max} (w_1)}{\lambda_{\rm min}( w_1)}\right)^{1/2}+\frac{1}{4}~\lambda_{\max} (w_2)^{1/2}
\right)\Vert  w_2- w_1\Vert_2.
\end{array}
$$
We set
$$
\varphi(\sigma_u,\varsigma_v):=
R(\omega(\sigma_u,\varsigma_v))
\quad\mbox{\rm and}\quad
\varphi_{\epsilon}(\widehat{\sigma}_u,\widehat{\varsigma}_v):=
R(\omega_{\epsilon}(\widehat{\sigma}_u,\widehat{\varsigma}_v))
$$
as well as
\begin{equation}\label{def-a}
a(\sigma_u,\varsigma_v):=\frac{1}{2}~\left(1+\left(4\lambda_{\max}(\psi(\sigma_u,\varsigma_v))+\frac{\lambda_{\max}(\psi(\sigma_u,\varsigma_v))}{\lambda_{\min}(\psi(\sigma_u,\varsigma_v))}
\right)^{1/2}+\frac{1}{2}~\lambda_{\min}^{-1/2}(\psi(\sigma_u,\varsigma_v))
\right).
\end{equation}
Using \eqref{vp-est} one can obtain the following estimate.
\begin{lem}
We have that
\begin{equation}\label{fp-est}
\begin{array}{l}
\displaystyle\Vert \varphi(\sigma_u,\varsigma_v)-\varphi_{\epsilon}(\widehat{\sigma}_u,\widehat{\varsigma}_v)\Vert_2\leq a(\sigma_u,\varsigma_v)~ \Vert  \omega(\sigma_u,\varsigma_v)- \omega_{\epsilon}(\widehat{\sigma}_u,\widehat{\varsigma}_v)\Vert_2.
\end{array}
\end{equation}
\end{lem}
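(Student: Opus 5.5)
The plan is to apply the spectral-norm Lipschitz estimate for the Riccati fixed point map $R$, displayed just before the definition of $\varphi$ and $\varphi_\epsilon$, with the choice
$$
w_1:=\omega(\sigma_u,\varsigma_v),\qquad w_2:=\omega_{\epsilon}(\widehat{\sigma}_u,\widehat{\varsigma}_v).
$$
I put the \emph{true} parameter in the slot $w_1$, because that slot carries the ratio $\lambda_{\max}/\lambda_{\min}$. Since $\varphi=R(w_1)$ and $\varphi_\epsilon=R(w_2)$, this gives
$$
\Vert \varphi(\sigma_u,\varsigma_v)-\varphi_{\epsilon}(\widehat{\sigma}_u,\widehat{\varsigma}_v)\Vert_2\leq \frac{1}{2}\left(1+\left(\frac{4+\lambda_{\max}(w_1)}{\lambda_{\min}(w_1)}\right)^{1/2}+\frac{1}{4}\lambda_{\max}(w_2)^{1/2}\right)\Vert w_2-w_1\Vert_2 .
$$
It then remains to bound the bracket by $a(\sigma_u,\varsigma_v)$ as given in \eqref{def-a}.

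For the middle term I use the first line of \eqref{vp-est}. Since $\omega=\psi^{-1}$, we have $\lambda_{\min}(w_1)=1/\lambda_{\max}(\psi)$ and $\lambda_{\max}(w_1)=1/\lambda_{\min}(\psi)$, with $\psi=\psi(\sigma_u,\varsigma_v)$. Hence
$$
\frac{4+\lambda_{\max}(w_1)}{\lambda_{\min}(w_1)}=4\lambda_{\max}(\psi)+\frac{\lambda_{\max}(\psi)}{\lambda_{\min}(\psi)},
$$
which is exactly the square-root term in $a$. The constant $1$ matches directly.

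The last term is the main obstacle. It involves $\lambda_{\max}(\omega_\epsilon(\widehat\sigma_u,\widehat\varsigma_v))$, and the second line of \eqref{vp-est} only bounds this by $1/(\epsilon+\lambda_{\min}(\psi(\widehat\sigma_u,\widehat\varsigma_v)))$, which is a quantity attached to the \emph{estimated} parameters. The target constant is $\tfrac12\cdot\tfrac12\lambda_{\min}^{-1/2}(\psi(\sigma_u,\varsigma_v))=\tfrac14\lambda_{\min}^{-1/2}(\psi)$. This is twice what one gets from $\tfrac18\lambda_{\max}(w_2)^{1/2}$ with $\lambda_{\max}(w_2)\le 1/\lambda_{\min}(\psi)$. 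So the extra factor $2$ leaves exactly the room to allow $\lambda_{\max}(w_2)\le 4/\lambda_{\min}(\psi)$, that is,
$$
\epsilon+\lambda_{\min}(\psi(\widehat\sigma_u,\widehat\varsigma_v))\ge \tfrac14\lambda_{\min}(\psi(\sigma_u,\varsigma_v)).
$$
I would invoke \eqref{vp-est} in this form, under which $\tfrac18\lambda_{\max}(w_2)^{1/2}\le\tfrac14\lambda_{\min}^{-1/2}(\psi)$. Collecting the three terms then gives \eqref{fp-est}. If that comparison between estimated and true spectra is not available, I would instead redo the telescoping decomposition of $R(w_1)-R(w_2)$ with the roles arranged so that the prefactor $w^{1/2}$ multiplying the difference of the $(I+w/4)^{1/2}$ terms is the true $w_1^{1/2}$. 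The price is that the Ando--Hemmen step \eqref{square-root-key-estimate} for $w_1^{1/2}-w_2^{1/2}$ then uses only $\lambda_{\min}(w_1)^{-1/2}$, dropping $\lambda_{\min}(w_2)$. I would then check whether the resulting constants still fit inside $a(\sigma_u,\varsigma_v)$.
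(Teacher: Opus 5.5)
Your main route is the paper's route. The paper justifies the lemma only by ``using \eqref{vp-est}'': it applies the spectral Lipschitz bound for $R$ with $w_1=\omega(\sigma_u,\varsigma_v)$ and $w_2=\omega_{\epsilon}(\widehat{\sigma}_u,\widehat{\varsigma}_v)$, and silently reads the second line of \eqref{vp-est} as $\lambda_{\max}(w_2)\le 1/\lambda_{\min}(\psi(\sigma_u,\varsigma_v))$. You are right that \eqref{vp-est} gives no such bound. In fact $\lambda_{\max}(w_2)=1/(\epsilon+\lambda_{\min}(\psi(\widehat{\sigma}_u,\widehat{\varsigma}_v)))$, which can be of order $1/\epsilon$.

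Your middle-term computation and your use of the factor-$2$ slack in $a(\sigma_u,\varsigma_v)$ are correct. However, they prove the estimate only in the regime $\lambda_{\max}(w_2)\le 4\lambda_{\max}(w_1)$, which is exactly your condition $\epsilon+\lambda_{\min}(\psi(\widehat{\sigma}_u,\widehat{\varsigma}_v))\ge\tfrac14\lambda_{\min}(\psi(\sigma_u,\varsigma_v))$. That is an extra hypothesis, not something you can invoke, so as written there is a genuine gap.

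Your fallback does not close it. With the roles swapped the decomposition is $R(w_1)-R(w_2)=\tfrac12(w_2-w_1)+w_1^{1/2}\big((I+w_1/4)^{1/2}-(I+w_2/4)^{1/2}\big)+(w_1^{1/2}-w_2^{1/2})(I+w_2/4)^{1/2}$. The last factor has spectral norm $(1+\lambda_{\max}(w_2)/4)^{1/2}$, so the uncontrolled quantity just moves to another term. Dropping $\lambda_{\min}(w_2)$ in the Ando--Hemmen step is already what the paper's spectral bound does, and is not the issue.

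The missing idea is that in the complementary regime $\lambda_{\max}(w_2)>4\lambda_{\max}(w_1)$ the perturbation is large, so a global bound of order $\delta^{1/2}$ suffices instead of a Lipschitz one. Set $\psi:=\psi(\sigma_u,\varsigma_v)$ and $\delta:=\Vert w_1-w_2\Vert_2$. Weyl's inequality gives $\delta\ge\lambda_{\max}(w_2)-\lambda_{\max}(w_1)>3\lambda_{\max}(w_1)=3/\lambda_{\min}(\psi)$.

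Writing $R(w)=2\,\big(I+(I+4w^{-1})^{1/2}\big)^{-1}$ shows that $R$ is operator monotone. Its scalar version is concave with $R(0)=0$, hence subadditive, and satisfies $R(t)\le\sqrt{t}$. From $w_1\le w_2+\delta I$ you get $R(w_1)\le R(w_2+\delta I)\le R(w_2)+R(\delta)I$; the second step holds eigenvalue-wise, since $w_2$ and $\delta I$ commute. The same holds with $w_1,w_2$ exchanged, hence
\[
\Vert R(w_1)-R(w_2)\Vert_2\le R(\delta)\le \delta^{1/2}=\frac{\delta}{\delta^{1/2}}<\Big(\frac{\lambda_{\min}(\psi)}{3}\Big)^{1/2}\delta\le \lambda_{\max}(\psi)^{1/2}\,\delta\le a(\sigma_u,\varsigma_v)\,\delta,
\]
where the last step uses $a(\sigma_u,\varsigma_v)\ge\tfrac12\left(4\lambda_{\max}(\psi)\right)^{1/2}$.

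Combined with your computation in the regime $\lambda_{\max}(w_2)\le 4\lambda_{\max}(w_1)$, this proves \eqref{fp-est} for every $\epsilon>0$ and every estimate. It also repairs the paper's one-line argument.
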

Observe that
$$
\begin{array}{l}
\omega_{\epsilon}(\sigma_u,\varsigma_v)-\omega(\sigma_u,\varsigma_v)\\
\\
=
\psi_{\epsilon}(\sigma_u,\varsigma_v)^{-1}~(\psi(\sigma_u,\varsigma_v)-\psi_{\epsilon}(\sigma_u,\varsigma_v))~\psi(\sigma_u,\varsigma_v)^{-1}=-{\epsilon}~\psi_{\epsilon}(\sigma_u,\varsigma_v)^{-1}\psi(\sigma_u,\varsigma_v)^{-1}\\
\\
\displaystyle\Longrightarrow \Vert \omega_{\epsilon}(\sigma_u,\varsigma_v)-\omega(\sigma_u,\varsigma_v)\Vert_2\leq \frac{\epsilon}{(\epsilon+\lambda_{\min}(\psi(\sigma_u,\varsigma_v)))\lambda_{\min}(\psi(\sigma_u,\varsigma_v))}
\end{array}
$$
as well as
$$
\begin{array}{l}
\omega_{\epsilon}(\sigma_u,\varsigma_v)-\omega_{\epsilon}(\widehat{\sigma}_u,\widehat{\varsigma}_v)\\
\\
=\psi_{\epsilon}(\sigma_u,\varsigma_v)^{-1}\left(\psi_{\epsilon}(\widehat{\sigma}_u,\widehat{\varsigma}_v)-\psi_{\epsilon}(\sigma_u,\varsigma_v)\right)\psi_{\epsilon}(\widehat{\sigma}_u,\widehat{\varsigma}_v)^{-1}\\
\\
=\psi_{\epsilon}(\sigma_u,\varsigma_v)^{-1}\left(\psi(\widehat{\sigma}_u,\widehat{\varsigma}_v)-\psi(\sigma_u,\varsigma_v)\right)\psi_{\epsilon}(\widehat{\sigma}_u,\widehat{\varsigma}_v)^{-1}.
\end{array}
$$
This implies that
$$
\Vert \omega_{\epsilon}(\sigma_u,\varsigma_v)-\omega_{\epsilon}(\widehat{\sigma}_u,\widehat{\varsigma}_v)\Vert_2\leq \frac{1}{\epsilon(\epsilon+\lambda_{\min}(\psi(\sigma_u,\varsigma_v)))}~\Vert \psi(\widehat{\sigma}_u,\widehat{\varsigma}_v)-\psi(\sigma_u,\varsigma_v)\Vert_2
$$
from which we check the following lemma.
\begin{lem}\label{lem:omega}
We have that
$$
\begin{array}{l}
\displaystyle\Vert \omega(\sigma_u,\varsigma_v)-\omega_{\epsilon}(\widehat{\sigma}_u,\widehat{\varsigma}_v)\Vert_2\leq \epsilon~a_1^{\epsilon}(\sigma_u,\varsigma_v)
+\frac{a_2^{\epsilon}(\sigma_u,\varsigma_v)}{\epsilon}~\Vert \psi(\widehat{\sigma}_u,\widehat{\varsigma}_v)-\psi(\sigma_u,\varsigma_v)\Vert_2
\end{array}
$$
with the parameters
$$
a_1^{\epsilon}(\sigma_u,\varsigma_v):=
\frac{1}{(\epsilon+\lambda_{\min}(\psi(\sigma_u,\varsigma_v)))\lambda_{\min}(\psi(\sigma_u,\varsigma_v))}\leq 
\frac{1}{\lambda^2_{\min}(\psi(\sigma_u,\varsigma_v))}
$$
and
$$
a_2^{\epsilon}(\sigma_u,\varsigma_v):=\frac{1}{\epsilon+\lambda_{\min}(\psi(\sigma_u,\varsigma_v))}\leq  \frac{1}{\lambda_{\min}(\psi(\sigma_u,\varsigma_v))}.
$$
\end{lem}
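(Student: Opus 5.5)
The plan is a triangle inequality through the intermediate point $\omega_{\epsilon}(\sigma_u,\varsigma_v)$, i.e.\ the inflated map evaluated at the \emph{true} parameters. I would write
$$
\omega(\sigma_u,\varsigma_v)-\omega_{\epsilon}(\widehat{\sigma}_u,\widehat{\varsigma}_v)=\bigl(\omega(\sigma_u,\varsigma_v)-\omega_{\epsilon}(\sigma_u,\varsigma_v)\bigr)+\bigl(\omega_{\epsilon}(\sigma_u,\varsigma_v)-\omega_{\epsilon}(\widehat{\sigma}_u,\widehat{\varsigma}_v)\bigr)
$$
and bound each spectral norm separately. The first difference is the pure inflation bias. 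The second is the estimation error of the inflated map.

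For the bias term, use the resolvent identity $A^{-1}-B^{-1}=A^{-1}(B-A)B^{-1}$ with $A=\psi_{\epsilon}$ and $B=\psi$, noting that $\psi_\epsilon-\psi=\epsilon I$. This gives $-\epsilon\,\psi_{\epsilon}^{-1}\psi^{-1}$. Submultiplicativity then yields $\Vert\psi_\epsilon^{-1}\Vert_2\le(\epsilon+\lambda_{\min}(\psi))^{-1}$ and $\Vert\psi^{-1}\Vert_2=\lambda_{\min}(\psi)^{-1}$. The product is exactly $\epsilon\,a_1^\epsilon(\sigma_u,\varsigma_v)$.

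For the second term, apply the same identity to $\psi_\epsilon(\sigma_u,\varsigma_v)$ and $\psi_\epsilon(\widehat\sigma_u,\widehat\varsigma_v)$. The $\epsilon I$ parts cancel in the middle factor, leaving $\psi(\widehat\sigma_u,\widehat\varsigma_v)-\psi(\sigma_u,\varsigma_v)$. The left factor is bounded by $(\epsilon+\lambda_{\min}(\psi(\sigma_u,\varsigma_v)))^{-1}=a_2^\epsilon$.

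The one point needing care is the right factor $\psi_\epsilon(\widehat\sigma_u,\widehat\varsigma_v)^{-1}$, since nothing is known about the spectrum of the estimated $\psi$. Here the inflation does the work: $\psi(\widehat\sigma_u,\widehat\varsigma_v)=\widehat\varsigma_v^{1/2}\cchi\widehat\sigma_u\cchi'\widehat\varsigma_v^{1/2}$ is positive semidefinite, as it is a congruence of a PSD matrix. Hence $\psi_\epsilon(\widehat\sigma_u,\widehat\varsigma_v)\ge\epsilon I$, which gives the uniform bound $\epsilon^{-1}$. Combining the three factors produces $\frac{a_2^\epsilon}{\epsilon}\Vert\psi(\widehat\sigma_u,\widehat\varsigma_v)-\psi(\sigma_u,\varsigma_v)\Vert_2$.

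Adding the two bounds gives the stated inequality. The comparisons $a_1^\epsilon\le\lambda_{\min}^{-2}(\psi)$ and $a_2^\epsilon\le\lambda_{\min}^{-1}(\psi)$ follow trivially by dropping $\epsilon\ge0$ from the denominators.
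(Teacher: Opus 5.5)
Your proposal is correct and follows essentially the same route as the paper: it also passes through $\omega_{\epsilon}(\sigma_u,\varsigma_v)$, writes the bias as $-\epsilon\,\psi_{\epsilon}(\sigma_u,\varsigma_v)^{-1}\psi(\sigma_u,\varsigma_v)^{-1}$, and handles the estimation part with the same resolvent identity, bounding the right-hand inverse by $\epsilon^{-1}$. Your explicit justification of that last bound is a detail the paper leaves implicit: $\psi(\widehat{\sigma}_u,\widehat{\varsigma}_v)$ is positive semidefinite, so $\psi_{\epsilon}(\widehat{\sigma}_u,\widehat{\varsigma}_v)\geq \epsilon I$.
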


\noindent Using (\ref{fp-est})  we check the following estimate.

\begin{lem}\label{lem:varphi}
We have that
\begin{equation}\label{fp-est-2}
\begin{array}{l}
\displaystyle\Vert \varphi(\sigma_u,\varsigma_v)-\varphi_{\epsilon}(\widehat{\sigma}_u,\widehat{\varsigma}_v)\Vert_2\\
\\
\displaystyle \leq  a(\sigma_u,\varsigma_v)~\left(\frac{\epsilon}{\lambda^2_{\min}(\psi(\sigma_u,\varsigma_v))}
+\frac{\epsilon^{-1}}{\lambda_{\min}(\psi(\sigma_u,\varsigma_v))}~\Vert \psi(\widehat{\sigma}_u,\widehat{\varsigma}_v)-\psi(\sigma_u,\varsigma_v)\Vert_2\right)\end{array}
\end{equation}
with $a(\sigma_u,\varsigma_v)$ as in (\ref{def-a}).
\end{lem}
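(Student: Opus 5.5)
This is a direct composition of the two preceding estimates. The plan is to take (\ref{fp-est}) as the starting point and substitute Lemma~\ref{lem:omega} into its right-hand side.

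\emph{Step 1.} By (\ref{fp-est}),
$$
\Vert \varphi(\sigma_u,\varsigma_v)-\varphi_{\epsilon}(\widehat{\sigma}_u,\widehat{\varsigma}_v)\Vert_2\leq a(\sigma_u,\varsigma_v)~\Vert \omega(\sigma_u,\varsigma_v)-\omega_{\epsilon}(\widehat{\sigma}_u,\widehat{\varsigma}_v)\Vert_2 .
$$
The constant $a(\sigma_u,\varsigma_v)$ in (\ref{def-a}) depends only on the true pair $(\sigma_u,\varsigma_v)$. This is the point where one relies on the earlier derivation of (\ref{fp-est}): the generic Lipschitz bound for $R$ is taken with $w_1=\omega(\sigma_u,\varsigma_v)$ and $w_2=\omega_{\epsilon}(\widehat{\sigma}_u,\widehat{\varsigma}_v)$. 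Its constant is then controlled using (\ref{vp-est}), in particular $\lambda_{\max}(\omega_\epsilon(\widehat\sigma_u,\widehat\varsigma_v))^{1/2}\le\epsilon^{-1/2}$ or the bound in terms of $\lambda_{\min}(\psi)$ of the true pair. I will take that lemma as given.

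\emph{Step 2.} I bound the $\omega$-difference by the triangle inequality through the intermediate point $\omega_\epsilon(\sigma_u,\varsigma_v)$, which is exactly Lemma~\ref{lem:omega}:
$$
\Vert \omega(\sigma_u,\varsigma_v)-\omega_{\epsilon}(\widehat{\sigma}_u,\widehat{\varsigma}_v)\Vert_2\leq \epsilon~a_1^{\epsilon}(\sigma_u,\varsigma_v)+\frac{a_2^{\epsilon}(\sigma_u,\varsigma_v)}{\epsilon}~\Vert \psi(\widehat{\sigma}_u,\widehat{\varsigma}_v)-\psi(\sigma_u,\varsigma_v)\Vert_2 .
$$

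\emph{Step 3.} I then use the $\epsilon$-free upper bounds stated in Lemma~\ref{lem:omega}, namely $a_1^\epsilon\le \lambda_{\min}^{-2}(\psi(\sigma_u,\varsigma_v))$ and $a_2^\epsilon\le\lambda_{\min}^{-1}(\psi(\sigma_u,\varsigma_v))$. Both follow from dropping $\epsilon\ge 0$ in the denominators. Inserting these into Step 1 gives (\ref{fp-est-2}) verbatim.

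There is no real obstacle here. The only point needing care is that the bound from Step 1 uses a constant depending on the true parameters and not on the estimated ones. That is guaranteed by the way (\ref{fp-est}) was stated, so the composition is legitimate.
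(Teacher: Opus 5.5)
Your proposal is correct and matches the paper's argument: insert Lemma~\ref{lem:omega} into (\ref{fp-est}), then replace $a_1^{\epsilon}$ and $a_2^{\epsilon}$ by their $\epsilon$-free bounds $\lambda_{\min}^{-2}(\psi(\sigma_u,\varsigma_v))$ and $\lambda_{\min}^{-1}(\psi(\sigma_u,\varsigma_v))$. You rightly flag that $a(\sigma_u,\varsigma_v)$ depends only on the true pair; this is taken from (\ref{fp-est}) as given, exactly as in the paper, but neither of your parenthetical justifications delivers it, since $\lambda_{\max}(\omega_{\epsilon}(\widehat{\sigma}_u,\widehat{\varsigma}_v))\le\epsilon^{-1}$ gives an $\epsilon$-dependent constant, and (\ref{vp-est}) applied at the estimated pair involves $\lambda_{\min}(\psi(\widehat{\sigma}_u,\widehat{\varsigma}_v))$ rather than $\lambda_{\min}(\psi(\sigma_u,\varsigma_v))$.
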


\noindent The following Lemma is proved in Appendix \ref{app:lip}.

\begin{lem}\label{lem:psi}
We have that
$$
\begin{array}{l}
\displaystyle
\Vert\psi(\widehat{\sigma}_u,\widehat{\varsigma}_v)-
\psi(\sigma_u,\varsigma_v)\Vert_2\\
\\
\displaystyle\leq \frac{\Vert \chi\Vert_2^2}{\lambda^{1/2}_{\min}({\varsigma}_v)}~ 
\Vert\widehat{\sigma}_u\Vert_2\left(\Vert\varsigma_v\Vert_2^{1/2}+\Vert\widehat{\varsigma}_v\Vert^{1/2}_2\right)~~\Vert \widehat{\varsigma}_v-{\varsigma}_v\Vert_2+
~\Vert \chi\Vert_2^2~\Vert\varsigma_v\Vert_2~
\Vert \widehat{\sigma}_u-\sigma_u\Vert_2
\end{array}
$$
and in addition that
$$
\begin{array}{l}
\displaystyle
\Vert\psi(\widehat{\sigma}_u,\widehat{\varsigma}_v)-
\psi(\sigma_u,\varsigma_v)\Vert_2\leq \Vert \chi\Vert_2^2~\left(\frac{1}{\lambda^{1/2}_{\min}({\varsigma}_v)}~\Vert \widehat{\varsigma}_v-{\varsigma}_v\Vert_2+\lambda_{\max}^{1/2}({\varsigma}_v)\right)^2~\Vert\widehat{\sigma}_u-{\sigma}_u\Vert_2\\
\\
\displaystyle\hskip3cm+\frac{\Vert\cchi\Vert^2_2~\Vert {\sigma}_u\Vert_2}{\lambda^{1/2}_{\min}({\varsigma}_v)}~\left(\frac{1}{\lambda^{1/2}_{\min}({\varsigma}_v)}~\Vert \widehat{\varsigma}_v-{\varsigma}_v\Vert_2+2\lambda_{\max}^{1/2}({\varsigma}_v)\right)~\Vert \widehat{\varsigma}_v-{\varsigma}_v\Vert_2.
\end{array}$$

\end{lem}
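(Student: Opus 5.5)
Both bounds come from a telescoping decomposition of $\psi(\widehat{\sigma}_u,\widehat{\varsigma}_v)-\psi(\sigma_u,\varsigma_v)$, with $\psi(\sigma,\varsigma)=\varsigma^{1/2}\,\cchi\sigma\cchi'\,\varsigma^{1/2}$. The only inputs are submultiplicativity of the spectral norm, $\Vert \cchi'\Vert_2=\Vert\cchi\Vert_2$, $\Vert\varsigma^{1/2}\Vert_2=\Vert\varsigma\Vert_2^{1/2}$, and the Ando--Hemmen inequality \eqref{square-root-key-estimate}. We drop the indices $u,v$ and apply \eqref{square-root-key-estimate} with one minimal eigenvalue dropped:
$$
\Vert\widehat{\varsigma}^{1/2}-\varsigma^{1/2}\Vert_2\leq \lambda_{\min}^{-1/2}(\varsigma)\,\Vert\widehat{\varsigma}-\varsigma\Vert_2=:\Delta .
$$

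\emph{First bound.} Swap one factor at a time:
$$
\psi(\widehat\sigma,\widehat\varsigma)-\psi(\sigma,\varsigma)
=(\widehat\varsigma^{1/2}-\varsigma^{1/2})\cchi\widehat\sigma\cchi'\widehat\varsigma^{1/2}
+\varsigma^{1/2}\cchi\widehat\sigma\cchi'(\widehat\varsigma^{1/2}-\varsigma^{1/2})
+\varsigma^{1/2}\cchi(\widehat\sigma-\sigma)\cchi'\varsigma^{1/2}.
\]
Taking norms, the first two terms give $\Vert\cchi\Vert_2^2\Vert\widehat\sigma\Vert_2(\Vert\widehat\varsigma\Vert_2^{1/2}+\Vert\varsigma\Vert_2^{1/2})\,\Delta$. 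The last term gives $\Vert\cchi\Vert_2^2\Vert\varsigma\Vert_2\Vert\widehat\sigma-\sigma\Vert_2$. This is exactly the first inequality.

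\emph{Second bound.} Here the bound should involve only $\sigma$, $\lambda_{\min}(\varsigma)$, $\lambda_{\max}(\varsigma)$ and the errors, so we expand around the true $\sigma$ instead. Write $\widehat\varsigma^{1/2}=\varsigma^{1/2}+E$ with $\Vert E\Vert_2\le\Delta$. The difference then splits as
$$
\widehat\varsigma^{1/2}\cchi(\widehat\sigma-\sigma)\cchi'\widehat\varsigma^{1/2}
+\bigl(\widehat\varsigma^{1/2}\cchi\sigma\cchi'\widehat\varsigma^{1/2}-\varsigma^{1/2}\cchi\sigma\cchi'\varsigma^{1/2}\bigr).
$$
Since $\Vert\widehat\varsigma^{1/2}\Vert_2\le\Delta+\lambda_{\max}^{1/2}(\varsigma)$, the first piece is bounded by $\Vert\cchi\Vert_2^2(\Delta+\lambda_{\max}^{1/2}(\varsigma))^2\Vert\widehat\sigma-\sigma\Vert_2$. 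Expanding the second piece in $E$ gives $E\cchi\sigma\cchi'\varsigma^{1/2}+\varsigma^{1/2}\cchi\sigma\cchi'E+E\cchi\sigma\cchi'E$. Its norm is at most $\Vert\cchi\Vert_2^2\Vert\sigma\Vert_2\,\Delta\,(2\lambda_{\max}^{1/2}(\varsigma)+\Delta)$, which is the stated second term once we pull out $\lambda_{\min}^{-1/2}(\varsigma)\Vert\widehat\varsigma-\varsigma\Vert_2$.

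No real obstacle is expected: the argument is bookkeeping. The one point needing care is choosing which factor carries the hat in each telescoping step, so that the constants match the statement, namely $\widehat\sigma$ in the first bound and $\sigma$ together with $\Vert\widehat\varsigma^{1/2}\Vert_2$ bounded via $\Delta$ in the second.
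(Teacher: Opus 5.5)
Your proof is correct and follows the paper's argument. The first bound uses the same three-term telescoping, with $\widehat{\sigma}_u$ carried in the two square-root-difference terms. For the second bound, your split into $\widehat{\varsigma}_v^{1/2}\cchi(\widehat{\sigma}_u-\sigma_u)\cchi'\widehat{\varsigma}_v^{1/2}$ plus the $\sigma_u$-part is exactly the paper's seven-term expansion in $\widehat{\varsigma}_v^{1/2}-\varsigma_v^{1/2}$ and $\widehat{\sigma}_u-\sigma_u$, just grouped up front. Both arguments then close with the Ando--Hemmen estimate $\Vert\widehat{\varsigma}_v^{1/2}-\varsigma_v^{1/2}\Vert_2\leq\lambda_{\min}^{-1/2}(\varsigma_v)\Vert\widehat{\varsigma}_v-\varsigma_v\Vert_2$, which gives identical constants.
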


\subsection{Gaussian Bridges}

One has that the exact and approximated Gaussian bridges are given by
$$
\BB_{u,v}(x,dy)=\mbox{\rm Prob}\left(Y_{u,v}(x)\in dy\right)
\quad \mbox{\rm and}\quad
\widehat{\BB}_{u,v}(x,dy)=\mbox{\rm Prob}\left(\widehat{Y}^{\epsilon}_{u,v}(x)\in dy\right)
$$
with the random maps
\begin{eqnarray*}
Y_{u,v}(x)&:=& n_v+\kappa_{u,v}~ \cchi~(x-m_u)+
 \kappa_{u,v}^{1/2}~G\quad \mbox{\rm with}\quad
 \kappa_{u,v}:=\varsigma_v^{1/2}~\varphi(\sigma_u,\varsigma_v)~\varsigma_v^{1/2}\\
 \widehat{Y}^{\epsilon}_{u,v}(x)&:=& \widehat{n}_v+\widehat{\kappa}_{u,v}~ \cchi~(x-\widehat{m}_u)+
\widehat{\kappa}_{u,v}^{1/2}~\widehat{G}\quad \mbox{\rm with}\quad
\widehat{\kappa}_{u,v}:=\widehat{\varsigma}_v^{1/2}~\varphi_{\epsilon}(\widehat{\sigma}_u,\widehat{\varsigma}_v)~\widehat{\varsigma}_v^{1/2}
 \end{eqnarray*}
where we recall that $G$ and $\widehat{G}$ are independent $d-$dimensional standard Gaussian random variables.
 Using \eqref{ricc-infsup} we check that
\begin{equation}\label{ricc-infsup-2}
  \kappa_{u,v}\leq \varsigma_v
  \quad \mbox{\rm and}\quad
  \widehat{\kappa}_{u,v}\leq \widehat{\varsigma}_v
    \quad \mbox{\rm so that}\quad
     \Vert \kappa_{u,v} \Vert _2\leq  \Vert \varsigma_v \Vert_2
  \quad \mbox{\rm and}\quad
  \Vert  \widehat{\kappa}_{u,v}\Vert_2\leq  \Vert \widehat{\varsigma}_v\Vert_2.
\end{equation}
 We also have
 $$
 \begin{array}{l}
\left( n_v+\kappa_{u,v}~ \cchi~((x-\widehat{m}_u)+(\widehat{m}_u-m_u))\right)-\left(\widehat{n}_v+\widehat{\kappa}_{u,v}~ \cchi~(x-\widehat{m}_u)\right)\\
 \\
 =(n_v-\widehat{n}_v)+\kappa_{u,v}~ \cchi~(\widehat{m}_u-m_u)+(\kappa_{u,v}-\widehat{\kappa}_{u,v}~)~ \cchi~(x-\widehat{m}_u).
 \end{array}
 $$
 By \eqref{eqD2-g} one has
$$
 \begin{array}{l}
\displaystyle\int~\nu_{\widehat{m}_u,\widehat{\sigma}_u}(dx)~ \Da_2(\BB_{u,v}(x,\point),\widehat{\BB}_{u,v}(x,\point))^2\\
\\
\displaystyle\leq \left[\lambda^{1/2}_{\rm min}(\kappa_{u,v})+\lambda^{1/2}_{\rm min}(\widehat{\kappa}_{u,v})\right]^{-2}~\Vert \kappa_{u,v}-\widehat{\kappa}_{u,v}\Vert_F^2\\
\\
\displaystyle+\Vert (n_v-\widehat{n}_v)+\kappa_{u,v}~ \cchi~(\widehat{m}_u-m_u)\Vert_2^2+\int~\nu_{\widehat{m}_u,\widehat{\sigma}_u}(dx)~\Vert(\kappa_{u,v}-\widehat{\kappa}_{u,v}~)~ \cchi~(x-\widehat{m}_u)\Vert_2^2. 
 \end{array}$$ 
Then,  we can show that
 $$
  \begin{array}{l}
\displaystyle
 \int~\nu_{\widehat{m}_u,\widehat{\sigma}_u}(dx)~\Vert(\kappa_{u,v}-\widehat{\kappa}_{u,v})~ \cchi~(x-\widehat{m}_u)\Vert_2^2\\
 \\
 =\displaystyle
 \int~\nu_{\widehat{m}_u,\widehat{\sigma}_u}(dx)~\tr\left((x-\widehat{m}_u)^{\prime}\chi^{\prime}
 (\kappa_{u,v}-\widehat{\kappa}_{u,v})^{\prime}(\kappa_{u,v}-\widehat{\kappa}_{u,v})\chi
 (x-\widehat{m}_u)\right)\\
 \\
  =\displaystyle
 \int~\nu_{\widehat{m}_u,\widehat{\sigma}_u}(dx)~\tr\left(\chi^{\prime}
 (\kappa_{u,v}-\widehat{\kappa}_{u,v})^{\prime}(\kappa_{u,v}-\widehat{\kappa}_{u,v})\chi~
 (x-\widehat{m}_u)(x-\widehat{m}_u)^{\prime}\right)\\
 \\
   =\displaystyle\tr\left(\chi^{\prime}
 (\kappa_{u,v}-\widehat{\kappa}_{u,v})^{\prime}(\kappa_{u,v}-\widehat{\kappa}_{u,v})\chi~
\widehat{\sigma}_u\right)=
\tr\left((\kappa_{u,v}-\widehat{\kappa}_{u,v})\chi~
\widehat{\sigma}_u\chi^{\prime}
 (\kappa_{u,v}-\widehat{\kappa}_{u,v})^{\prime}\right)\\
 \\
 \leq \lambda_{\max}(\chi~
\widehat{\sigma}_u\chi^{\prime})~\Vert \kappa_{u,v}-\widehat{\kappa}_{u,v}\Vert_F^2\leq \Vert \widehat{\sigma}_u\Vert_2~\Vert \chi\Vert_2^2~
\Vert \kappa_{u,v}-\widehat{\kappa}_{u,v}\Vert_F^2 
 \end{array}$$ 
 Using (\ref{ricc-infsup-2}), this yields the following  result.
 
\begin{lem}\label{lem:w2-gauss}
We have that
 \begin{equation}\label{ineq:w2-gauss}
 \begin{array}{l}
\displaystyle\int~\nu_{\widehat{m}_u,\widehat{\sigma}_u}(dx)~ \Da_2(\BB_{u,v}(x,\point),\widehat{\BB}_{u,v}(x,\point))^2\\
\\
\displaystyle\leq \left(\frac{1}{\lambda_{\rm min}(\kappa_{u,v})}+\Vert \widehat{\sigma}_u\Vert_2~\Vert \chi\Vert_2^2\right)~\Vert \kappa_{u,v}-\widehat{\kappa}_{u,v}\Vert_F^2+2\Vert n_v-\widehat{n}_v\Vert_2^2+2\Vert \varsigma_v \Vert_2^2\Vert\cchi\Vert_2^2~\Vert\widehat{m}_u-m_u\Vert_2^2. 
\end{array} 
\end{equation}
\end{lem}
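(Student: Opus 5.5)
The plan is to start from the three-term decomposition already displayed just before the statement. That display was obtained by applying \eqref{eqD2-g} pointwise in $x$ to the two Gaussians $\BB_{u,v}(x,\point)=\nu_{\mu(x),\kappa_{u,v}}$ and $\widehat{\BB}_{u,v}(x,\point)=\nu_{\widehat\mu(x),\widehat\kappa_{u,v}}$, and then integrating against $\nu_{\widehat{m}_u,\widehat{\sigma}_u}(dx)$. I will take each of its three terms in turn and bound it separately.

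\emph{Covariance term.} Here I only need $\lambda^{1/2}_{\min}(\widehat\kappa_{u,v})\geq 0$. This gives
$$
\left[\lambda^{1/2}_{\min}(\kappa_{u,v})+\lambda^{1/2}_{\min}(\widehat{\kappa}_{u,v})\right]^{-2}\leq \frac{1}{\lambda_{\min}(\kappa_{u,v})}.
$$
The quantity $\lambda_{\min}(\kappa_{u,v})$ is strictly positive: $\kappa_{u,v}=\varsigma_v^{1/2}\varphi(\sigma_u,\varsigma_v)\varsigma_v^{1/2}$ with $\varsigma_v\in\Sa_d^+$, and $\varphi=R(\omega)\geq (I+\omega^{-1})^{-1}>0$ by \eqref{ricc-infsup}. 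So the first term is bounded by $\lambda_{\min}(\kappa_{u,v})^{-1}\Vert\kappa_{u,v}-\widehat\kappa_{u,v}\Vert_F^2$.

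\emph{Fluctuation term.} The term $\int\nu_{\widehat m_u,\widehat\sigma_u}(dx)\Vert(\kappa_{u,v}-\widehat\kappa_{u,v})\chi(x-\widehat m_u)\Vert_2^2$ was already computed above via the trace identity. It is bounded by $\lambda_{\max}(\chi\widehat\sigma_u\chi')\Vert\kappa_{u,v}-\widehat\kappa_{u,v}\Vert_F^2\leq\Vert\widehat\sigma_u\Vert_2\Vert\chi\Vert_2^2\Vert\kappa_{u,v}-\widehat\kappa_{u,v}\Vert_F^2$. This uses $\tr(A S A')\leq\lambda_{\max}(S)\tr(AA')$ for $S\geq0$. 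Adding it to the covariance term gives the first summand of \eqref{ineq:w2-gauss}.

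\emph{Mean offset term.} For $\Vert(n_v-\widehat n_v)+\kappa_{u,v}\chi(\widehat m_u-m_u)\Vert_2^2$, I apply $\Vert a+b\Vert^2\leq2\Vert a\Vert^2+2\Vert b\Vert^2$. I then bound $\Vert\kappa_{u,v}\chi(\widehat m_u-m_u)\Vert_2\leq\Vert\kappa_{u,v}\Vert_2\Vert\chi\Vert_2\Vert\widehat m_u-m_u\Vert_2$ and invoke \eqref{ricc-infsup-2}, i.e.\ $\Vert\kappa_{u,v}\Vert_2\leq\Vert\varsigma_v\Vert_2$. This produces the remaining two summands $2\Vert n_v-\widehat n_v\Vert_2^2+2\Vert\varsigma_v\Vert_2^2\Vert\chi\Vert_2^2\Vert\widehat m_u-m_u\Vert_2^2$.

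There is no real obstacle; the only point needing care is the validity of the decomposition itself. Here I would note that writing $\mu(x)=n_v+\kappa_{u,v}\chi((x-\widehat m_u)+(\widehat m_u-m_u))$ makes the squared mean difference exactly the squared norm of the displayed sum. That sum is a constant vector plus a centred linear term in $x-\widehat m_u$. Integrating against $\nu_{\widehat m_u,\widehat\sigma_u}$ kills the cross term, since $x-\widehat m_u$ has mean zero, and this is why the two pieces appear additively. Summing the three bounds yields \eqref{ineq:w2-gauss}.
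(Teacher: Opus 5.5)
Your proposal is correct and matches the paper's argument: apply \eqref{eqD2-g} pointwise in $x$, drop $\lambda_{\min}(\widehat\kappa_{u,v})\geq 0$, bound the centred linear term with the trace computation, and split the constant mean offset using $\Vert a+b\Vert_2^2\leq 2\Vert a\Vert_2^2+2\Vert b\Vert_2^2$ together with $\Vert\kappa_{u,v}\Vert_2\leq\Vert\varsigma_v\Vert_2$ from \eqref{ricc-infsup-2}. You also state why the cross term vanishes, namely that $x-\widehat m_u$ is centred under $\nu_{\widehat m_u,\widehat\sigma_u}$; the paper leaves this step implicit.
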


Using the decomposition
$$
\begin{array}{l}
\kappa_{u,v}-\widehat{\kappa}_{u,v}\\
\\
=(\varsigma_v^{1/2}-\widehat{\varsigma}_v^{1/2})~\varphi(\sigma_u,\varsigma_v)~\varsigma_v^{1/2}+
\widehat{\varsigma}_v^{1/2}(\varphi(\sigma_u,\varsigma_v)-\varphi_{\epsilon}(\widehat{\sigma}_u,\widehat{\varsigma}_v))~\varsigma_v^{1/2}+
\widehat{\varsigma}_v^{1/2}\varphi_{\epsilon}(\widehat{\sigma}_u,\widehat{\varsigma}_v)~(\varsigma_v^{1/2}-\widehat{\varsigma}_v^{1/2})
\end{array}$$
we check that
$$
\begin{array}{l}
\displaystyle\Vert\kappa_{u,v}-\widehat{\kappa}_{u,v}\Vert\leq \frac{1}{\lambda^{1/2}_{\rm min}(\varsigma_v)}~\left(\Vert\varphi(\sigma_u,\varsigma_v)\Vert~\Vert\varsigma_v^{1/2}\Vert+\Vert\widehat{\varsigma}_v^{1/2}\Vert~\Vert\varphi_{\epsilon}(\widehat{\sigma}_u,\widehat{\varsigma}_v)\Vert\right)~\Vert\varsigma_v-\widehat{\varsigma}_v\Vert \\
\\
\displaystyle\hskip5cm+
\Vert\widehat{\varsigma}_v^{1/2}\Vert~\Vert \varsigma_v^{1/2}\Vert~
\Vert\varphi(\sigma_u,\varsigma_v)-\varphi_{\epsilon}(\widehat{\sigma}_u,\widehat{\varsigma}_v)\Vert.
\end{array}$$
Then noting \eqref{ricc-infsup}, we conclude that
$$
\begin{array}{l}
\displaystyle\Vert\kappa_{u,v}-\widehat{\kappa}_{u,v}\Vert\leq \frac{\Vert\varsigma_v^{1/2}\Vert+\Vert\widehat{\varsigma}_v^{1/2}\Vert}{\lambda^{1/2}_{\rm min}(\varsigma_v)}~\Vert I\Vert~\Vert\varsigma_v-\widehat{\varsigma}_v\Vert +
\Vert\widehat{\varsigma}_v^{1/2}\Vert~\Vert \varsigma_v^{1/2}\Vert~
\Vert\varphi(\sigma_u,\varsigma_v)-\varphi_{\epsilon}(\widehat{\sigma}_u,\widehat{\varsigma}_v)\Vert.
\end{array}$$
For instance for the spectral norm we have
\begin{equation}\label{ineq:spe}
\begin{array}{l}
\displaystyle\Vert\kappa_{u,v}-\widehat{\kappa}_{u,v}\Vert_2\leq \frac{\Vert\varsigma_v\Vert_2^{1/2}+\Vert\widehat{\varsigma}_v\Vert^{1/2}_2}{\lambda^{1/2}_{\rm min}(\varsigma_v)}~\Vert\varsigma_v-\widehat{\varsigma}_v\Vert_2 +
\Vert\widehat{\varsigma}_v\Vert^{1/2}_2~\Vert \varsigma_v\Vert^{1/2}_2~
\Vert\varphi(\sigma_u,\varsigma_v)-\varphi_{\epsilon}(\widehat{\sigma}_u,\widehat{\varsigma}_v)\Vert_2.
\end{array}
\end{equation}

\subsection{Stability of Gaussian Mixture Bridges}
 For the sake of brevity, we assume $J(u,dv)=\jmath(dv)$ and the general case is easy to derive.
Let $(\widehat{m}_u,\widehat{\sigma}_u)$ and $(\widehat{n}_v,\widehat{\varsigma}_v)$ be some estimates of the parameter mappings $u\mapsto(m_u,\sigma_u)$ and $v\mapsto(n_v,\varsigma_v)$, and let $(\widehat{\PP},\widehat{\QQ},\widehat{\BB}_{u,v},\widehat{\MM})$ be the
estimated versions of $(\PP,\QQ,\BB_{u,v}, \MM)$ 
which include the $\epsilon$-inflated covariance approximations.  We quantify the estimation error by the following averaged quantities, with integer $k\ge 1,$
\begin{equation}\label{def-deltas}
\begin{array}{ll}
\displaystyle\Delta_{m,k}:=\int~\imath(du)~\Vert\widehat{m}_u-m_u\Vert_2^k,
&\displaystyle\Delta_{\sigma,k}:=\int~\imath(du)~\Vert\widehat{\sigma}_u-\sigma_u\Vert_F^k,\\
\\
\displaystyle\Delta_{n,k}:=\int~\jmath(dv)~\Vert\widehat{n}_v-n_v\Vert_2^k,
&\displaystyle\Delta_{\varsigma,k}:=\int~\jmath(dv)~\Vert\widehat{\varsigma}_v-\varsigma_v\Vert_F^k.
\end{array}
\end{equation}
For simplicity, we also define the following estimates
\begin{equation}\label{def-b}
    b_{\sigma,k}:=\int\imath(du)\Vert\widehat{\sigma}_u\Vert_2^k, \quad {b}_{\varsigma,k}:=\int \jmath(dv)\Vert\widehat{\varsigma}_v\Vert_2^k.
\end{equation}
Therefore, for any integers $k,l \ge1,$
by Cauchy-Schwarz inequality, one has
\begin{equation}\label{Cauchy}
    \int\imath(du)\Vert\widehat{\sigma}_u\Vert_2^k\Vert\widehat{\sigma}_u-\sigma_u\Vert_2^l \le b_{\sigma,2k}^{1/2}\Delta_{\sigma,2l}^{1/2}, \quad 
    \int\jmath(dv)\Vert\widehat{\varsigma}_v\Vert_2^k\Vert\widehat{\varsigma}_v-\varsigma_v\Vert_2^l \le b_{\varsigma,2k}^{1/2}\Delta_{\varsigma,2l}^{1/2}.
\end{equation}
We also define
\begin{equation}\label{bmax}
   \texttt{B}:={\max\left\{ b_{\varsigma,1}^{1/2},\, b_{\varsigma,1},\, b_{\sigma,1} b_{\varsigma,2}^{1/2},\, b_{\sigma,1} b_{\varsigma,4}^{1/2},\, b_{\sigma,1},\, b_{\sigma,1} b_{\varsigma,1}^{1/2},\, b_{\varsigma,1} b_{\sigma,2}^{1/2},\, b_{\sigma,2} b_{\varsigma,2}^{1/2},\, b_{\sigma,2} b_{\varsigma,4}^{1/2},\, b_{\sigma,1} b_{\varsigma,1} \right\}} 
\end{equation}

We work under the following uniform conditions.

\begin{hypA}\label{ass-1}
There exist $0<\underline{\texttt{C}}\leq \overline{\texttt{C}}<\infty$ such that for $\imath$-almost every $u\in\UU$ and $\jmath$-almost every $v\in\VV$ we have
$$
\lambda_{\min}(\sigma_u)\wedge\lambda_{\min}(\varsigma_v)\geq \underline{\texttt{C}}
\quad\mbox{\rm and}\quad
\Vert\sigma_u\Vert_2\vee\Vert\varsigma_v\Vert_2 \leq \overline{\texttt{C}}.
$$
\end{hypA}

\begin{hypA}\label{ass-2}
$\cchi\cchi^{\prime}\in\Sa_d^+$.
\end{hypA}

Under (A\ref{ass-1}-\ref{ass-2}), the following bounds are uniform in $(u,v)$:
\begin{equation}\label{uniform-bounds}
\lambda_{\min}\left(\psi(\sigma_u,\varsigma_v)\right)\geq \psi_0:=\underline{\texttt{C}}^2~\lambda_{\min}(\cchi\cchi^{\prime}),
\qquad
\lambda_{\max}\left(\psi(\sigma_u,\varsigma_v)\right)\leq \psi_1:=\overline{\texttt{C}}^2~\Vert\cchi\Vert_2^2,
\end{equation}
the constant $a(\sigma_u,\varsigma_v)$ defined in (\ref{def-a}) satisfies
$a(\sigma_u,\varsigma_v)\leq \overline{a}:=\frac12\left(1+\left(4\psi_1+\psi_1/\psi_0\right)^{1/2}+\frac12\psi_0^{-1/2}\right)$,
and by (\ref{ricc-infsup}) we have
\begin{equation}\label{kappa-min}
\lambda_{\min}(\kappa_{u,v})\geq \kappa_0:=\underline{\texttt{C}}(1+\psi_1)^{-1}
\quad\mbox{\rm and}\quad
\Vert\kappa_{u,v}\Vert_2\leq \Vert\varsigma_v\Vert_2\leq \overline{\texttt{C}} .
\end{equation}

\begin{theo}\label{theo-main}
Assume (H\ref{ass:cont_cond},  A\ref{ass-1}-\ref{ass-2}).
Then there exists a $C<\infty$ depending only on $(a,\underline{\texttt{\emph{C}}},\overline{\texttt{\emph{C}}},\cchi)$ such that for any $\epsilon>0$ and $d\geq 1$ we have that 
$$
\Da_2\left(\widehat{\PP}\times\widehat{\MM},\PP\times\MM\right)^2
\leq
$$
\begin{equation}\label{main-bound}
Cd\left[
\Delta_{m,2}+\Delta_{n,2}+\Delta_{\sigma,2}+\texttt{\emph{B}}\left(\Delta_{\varsigma,2}+\Delta_{\varsigma,4}^{1/2}+\epsilon^{-2}\bigl(\Delta_{\sigma,2}+\Delta_{\sigma,4}^{1/2}+\Delta_{\varsigma,4}^{1/2}\bigr)+\epsilon^2\right)
\right].
\end{equation}
where $\texttt{\emph{B}}$ is defined in \eqref{bmax}.
\end{theo}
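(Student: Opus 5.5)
Start from Theorem~\ref{thm:conti}, which holds under (H\ref{ass:cont_cond}), and square it using $(x+y)^2\le 2x^2+2y^2$. This splits $\Da_2(\widehat{\PP}\times\widehat{\MM},\PP\times\MM)^2$ into two pieces:
\[
2(1+a^2)\int\imath(du)\,\Da_2\bigl(\nu_{\widehat{m}_u,\widehat{\sigma}_u},\nu_{m_u,\sigma_u}\bigr)^2
\]
and
\[
2\int\imath(du)\jmath(dv)\int\nu_{\widehat m_u,\widehat\sigma_u}(dx)\,\Da_2\bigl(\widehat{\BB}_{u,v}(x,\point),\BB_{u,v}(x,\point)\bigr)^2 ,
\]
where we use $\pi=\imath\otimes\jmath$ since $J(u,dv)=\jmath(dv)$.

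\textbf{First piece.} Apply (\ref{eqD2-g}) together with (A\ref{ass-1}), giving $\lambda_{\min}(\sigma_u)\ge\underline{\texttt{C}}$. Each integrand is then bounded by $\underline{\texttt{C}}^{-1}\Vert\widehat\sigma_u-\sigma_u\Vert_F^2+\Vert\widehat m_u-m_u\Vert_2^2$, so this piece is controlled by $C(\Delta_{\sigma,2}+\Delta_{m,2})$.

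\textbf{Second piece, reduction to $\Vert\kappa_{u,v}-\widehat\kappa_{u,v}\Vert_2$.} Invoke Lemma~\ref{lem:w2-gauss}. By (\ref{kappa-min}), $\lambda_{\min}(\kappa_{u,v})\ge\kappa_0$ and $\Vert\varsigma_v\Vert_2\le\overline{\texttt{C}}$. The mean terms integrate to $C(\Delta_{n,2}+\Delta_{m,2})$. The remaining term is at most $C(1+\Vert\widehat\sigma_u\Vert_2)\Vert\kappa_{u,v}-\widehat\kappa_{u,v}\Vert_F^2$. Here convert $\Vert\cdot\Vert_F^2\le d\Vert\cdot\Vert_2^2$; this is where the global factor $d$ enters. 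Next bound $\Vert\kappa_{u,v}-\widehat\kappa_{u,v}\Vert_2$ via (\ref{ineq:spe}), which involves two terms:
\begin{itemize}
\item a $\varsigma$-term, $\underline{\texttt{C}}^{-1/2}\bigl(\overline{\texttt{C}}^{1/2}+\Vert\widehat\varsigma_v\Vert_2^{1/2}\bigr)\Vert\varsigma_v-\widehat\varsigma_v\Vert_2$;
\item a term $\Vert\widehat\varsigma_v\Vert_2^{1/2}\overline{\texttt{C}}^{1/2}\,\Vert\varphi-\varphi_\epsilon\Vert_2$.
\end{itemize}

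\textbf{Bounding $\Vert\varphi-\varphi_\epsilon\Vert_2$.} Apply Lemma~\ref{lem:varphi} with the uniform constants $\overline a,\psi_0$ from (\ref{uniform-bounds}). This gives
\[
\Vert\varphi-\varphi_\epsilon\Vert_2\le C\bigl(\epsilon+\epsilon^{-1}\Vert\psi(\widehat\sigma_u,\widehat\varsigma_v)-\psi(\sigma_u,\varsigma_v)\Vert_2\bigr).
\]
For the $\psi$-difference, use the first estimate of Lemma~\ref{lem:psi}:
\[
C\Vert\widehat\sigma_u\Vert_2\bigl(1+\Vert\widehat\varsigma_v\Vert_2^{1/2}\bigr)\Vert\widehat\varsigma_v-\varsigma_v\Vert_2+C\Vert\widehat\sigma_u-\sigma_u\Vert_2 .
\]

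\textbf{Squaring and integrating.} Square everything, expanding with $(\sum_{i=1}^n x_i)^2\le n\sum_{i=1}^n x_i^2$. Every resulting term has the form (a product of powers of $\Vert\widehat\sigma_u\Vert_2$ and $\Vert\widehat\varsigma_v\Vert_2$) times either $\epsilon^2$, or $\Vert\widehat\varsigma_v-\varsigma_v\Vert_2^2$, or $\epsilon^{-2}\times$ (a squared error). Integrate against $\imath\otimes\jmath$. Because $u$ and $v$ are integrated independently, products factor into $b_{\sigma,k}\,b_{\varsigma,l}$. Errors multiplied by powers of estimated norms are handled with Cauchy--Schwarz (\ref{Cauchy}), which produces factors such as $b_{\varsigma,2}^{1/2}\Delta_{\varsigma,4}^{1/2}$ and $b_{\sigma,2}^{1/2}\Delta_{\sigma,4}^{1/2}$. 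Pure squared errors not multiplied by norms give $\Delta_{\varsigma,2}$ and $\Delta_{\sigma,2}$. Finally, $\Vert\cdot\Vert_2\le\Vert\cdot\Vert_F$ lets us compare with the $\Delta$'s defined via Frobenius norms.

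\textbf{Main obstacle.} The expected difficulty is purely the bookkeeping in this last step. One must check that every moment product that arises — for instance $b_{\sigma,1}b_{\varsigma,1}$, $b_{\sigma,2}b_{\varsigma,4}^{1/2}$, $b_{\varsigma,1}b_{\sigma,2}^{1/2}$, and so on — is among those listed in the definition (\ref{bmax}) of $\texttt{B}$. Any product that is not can be reduced by Jensen or Cauchy--Schwarz, possibly after adjusting which factor receives the square root. This will also show that each term is at most $C\texttt{B}$ times one of $\Delta_{\varsigma,2}$, $\Delta_{\varsigma,4}^{1/2}$, $\epsilon^{-2}(\Delta_{\sigma,2}+\Delta_{\sigma,4}^{1/2}+\Delta_{\varsigma,4}^{1/2})$ or $\epsilon^2$. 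Summing the two pieces and absorbing constants depending only on $(a,\underline{\texttt{C}},\overline{\texttt{C}},\cchi)$ into $C$, with the factor $d\ge1$ multiplying everything, yields (\ref{main-bound}).
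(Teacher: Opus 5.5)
Your proposal is essentially the paper's proof. Both square Theorem~\ref{thm:conti}, bound the marginal term by (\ref{eqD2-g}) and (A\ref{ass-1}), bound the bridge term through Lemma~\ref{lem:w2-gauss}, (\ref{ineq:spe}), Lemma~\ref{lem:varphi} and the first estimate of Lemma~\ref{lem:psi}, and then square, integrate against $\imath\otimes\jmath$ and apply (\ref{Cauchy}).

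The one soft spot, shared with the paper, is the final matching with $\texttt{B}$, which is asserted rather than checked. With the estimates exactly as you state them you also get factors such as $b_{\varsigma,2}^{1/2}$ and $b_{\sigma,3}$, plus a $\Delta_{\varsigma,2}$ term not multiplied by $\texttt{B}$; none of these is covered by (\ref{bmax}). Jensen only bounds lower moments by higher ones, so your proposed reduction cannot absorb them. The paper's list corresponds to a pointwise estimate in which $\Vert\widehat\sigma_u\Vert_2$ and $\Vert\widehat\varsigma_v\Vert_2$ appear to lower powers than (\ref{ineq:spe}) and Lemma~\ref{lem:psi} actually give.
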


\begin{proof}
By Theorem~\ref{thm:conti} and the inequality $(x+y)^2\leq 2x^2+2y^2$ we have
$$
\displaystyle\Da_2\left(\widehat{\PP}\times\widehat{\MM},\PP\times\MM\right)^2
\leq
$$
\begin{equation}\label{eq:main_bound_prf}
2(1+a^2)\int~\imath(du)~\Da_2\left(\nu_{\widehat{m}_u,\widehat{\sigma}_u},\nu_{m_u,\sigma_u}\right)^2\\
+
2\int~\pi(d(u,v))~\nu_{\widehat{m}_u,\widehat{\sigma}_u}(dx)~\Da_2\left(\widehat{\BB}_{u,v}(x,\point),{\BB}_{u,v}(x,\point)\right)^2 .
\end{equation}
We now have to deal with the two terms on the R.H.S.~of the above equation. The first is simple as one can apply \eqref{eqD2-g} and (A\ref{ass-1}),
$$
\Da_2\left(\nu_{\widehat{m}_u,\widehat{\sigma}_u},\nu_{m_u,\sigma_u}\right)^2
\leq
\frac{1}{\lambda_{\min}(\sigma_u)}~\Vert\widehat{\sigma}_u-\sigma_u\Vert_F^2
+\Vert\widehat{m}_u-m_u\Vert_2^2
\leq
\underline{\texttt{C}}^{-1}~\Vert\widehat{\sigma}_u-\sigma_u\Vert_F^2+\Vert\widehat{m}_u-m_u\Vert_2^2 ,
$$
which integrates into $\underline{\texttt{C}}^{-1}\Delta_{\sigma,2}+\Delta_{m,2}$.

Now turning to the second term on the R.H.S.~of \eqref{eq:main_bound_prf}, 
by Lemma \ref{lem:w2-gauss} (in particular \eqref{ineq:w2-gauss}) and the estimate (\ref{kappa-min}),
\begin{equation}\label{eq:BB}
\begin{array}{l}
\displaystyle\int~\nu_{\widehat{m}_u,\widehat{\sigma}_u}(dx)~\Da_2\left(\BB_{u,v}(x,\point),\widehat{\BB}_{u,v}(x,\point)\right)^2\\
\\
\displaystyle\leq
\left(\kappa_0^{-1}+\Vert\widehat{\sigma}_u\Vert_2
\Vert\cchi\Vert_2^2\right)\Vert\kappa_{u,v}-\widehat{\kappa}_{u,v}\Vert_F^2
+2\Vert n_v-\widehat{n}_v\Vert_2^2
+2\overline{\texttt{C}}^2\Vert\cchi\Vert_2^2~\Vert\widehat{m}_u-m_u\Vert_2^2 .
\end{array}
\end{equation}
By \eqref{ineq:spe} and (\ref{fp-est-2}),
$$
\begin{array}{l}
\displaystyle\Vert\kappa_{u,v}-\widehat{\kappa}_{u,v}\Vert_2
\leq
\frac{\Vert\varsigma_v\Vert_2^{1/2}+\Vert\widehat{\varsigma}_v\Vert_2^{1/2}}{\lambda^{1/2}_{\min}(\varsigma_v)}~
\Vert\varsigma_v-\widehat{\varsigma}_v\Vert_2
+
\Vert\widehat{\varsigma}_v\Vert^{1/2}_2~\Vert \varsigma_v\Vert^{1/2}_2~
\Vert\varphi(\sigma_u,\varsigma_v)-\varphi_{\epsilon}(\widehat{\sigma}_u,\widehat{\varsigma}_v)\Vert_2\\
\\
\displaystyle\leq
\frac{\sqrt{\overline{\texttt{C}}}+\Vert\widehat{\varsigma}_v\Vert_2^{1/2}}{\sqrt{\underline{\texttt{C}}}}~\Vert\varsigma_v-\widehat{\varsigma}_v\Vert_2
+
\Vert\widehat{\varsigma}_v\Vert_2^{1/2}~\overline{a}\left(
\frac{\epsilon}{\psi_0^2}
+
\frac{1}{\epsilon\,\psi_0}~\Vert\psi(\widehat{\sigma}_u,\widehat{\varsigma}_v)-\psi(\sigma_u,\varsigma_v)\Vert_2
\right),
\end{array}$$
where the second line uses (\ref{uniform-bounds}). By Lemma~\ref{lem:psi} and (A\ref{ass-1}-\ref{ass-2}),
$$
\Vert\psi(\widehat{\sigma}_u,\widehat{\varsigma}_v)-\psi(\sigma_u,\varsigma_v)\Vert_2
\leq
\frac{\Vert\cchi\Vert_2^2}{\sqrt{\underline{\texttt{C}}}}~\Vert\widehat{\sigma}_u\Vert_2^{1/2}\left(\sqrt{\overline{\texttt{C}}}+\Vert\widehat{\varsigma}_v\Vert_2^{1/2}\right)
\Vert\widehat{\varsigma}_v-\varsigma_v\Vert_2
+
\Vert\cchi\Vert_2^2~\overline{\texttt{C}}~\Vert\widehat{\sigma}_u-\sigma_u\Vert_2 .
$$
Squaring and using $(x+y)^2\leq 2x^2+2y^2$ repeatedly, we arrive at the pointwise estimate
$$
\Vert\kappa_{u,v}-\widehat{\kappa}_{u,v}\Vert_F^2
\leq
$$
\begin{equation*}
Cd\left[(1+\Vert\widehat{\varsigma}_v\Vert_2^{1/2})\Vert\widehat{\varsigma}_v-\varsigma_v\Vert_2^2+\Vert\widehat{\varsigma}_v\Vert_2\bigl(\epsilon^2+\epsilon^{-1}\left(\Vert\widehat{\sigma}_u\Vert_2(1+\Vert\widehat{\varsigma}_v\Vert_2)\Vert\widehat{\varsigma}_v-\varsigma_v\Vert_2^2+\Vert\widehat{\sigma}_u-\sigma_u\Vert_2^2\right)\bigr)\right]
\end{equation*}
where we also used the norm equivalence $\Vert\cdot\Vert_F\leq\sqrt{d}\,\Vert\cdot\Vert_2$, and where $C$ depends only on $(a,\underline{\texttt{C}},\overline{\texttt{C}},\cchi)$.
Integrating w.r.t.~$\pi(d(u,v))$, with \eqref{def-b} and Cauchy-Schwarz inequality \eqref{Cauchy}, we obtain
$$
\int\pi(d(u,v))\Vert\kappa_{u,v}-\widehat{\kappa}_{u,v}\Vert_F^2
\le
$$
\begin{equation}\label{int-kappa}
Cd\left(\Delta_{\varsigma,2}+b_{\varsigma,1}^{1/2}\Delta_{\varsigma,4}^{1/2}+\epsilon^{-1}(b_{\varsigma,1}\Delta_{\sigma,2}+b_{\sigma,1}b_{\varsigma,2}^{1/2}\Delta_{\varsigma,4}^{1/2}+b_{\sigma,1}b_{\varsigma,4}^{1/2}\Delta_{\varsigma,4}^{1/2})+\epsilon^2b_{\varsigma,1}\right).
\end{equation}
Similarly, we also have
\begin{equation}\label{int-kappa-2}
   \begin{array}{l}
 \int\pi(d(u,v))\Vert\widehat{\sigma}_u\Vert_2\Vert\kappa_{u,v}-\widehat{\kappa}_{u,v}\Vert_F^2\\\\
\le 
C\bigl(b_{\sigma,1}\Delta_{\varsigma,2}+b_{\sigma,1}b_{\varsigma,1}^{1/2}\Delta_{\varsigma,4}^{1/2}+\epsilon^{-1}(b_{\varsigma,1}b_{\sigma,2}^{1/2}\Delta_{\sigma,4}^{1/2}+b_{\sigma,2}b_{\varsigma,2}^{1/2}\Delta_{\varsigma,4}^{1/2}+b_{\sigma,2}b_{\varsigma,4}^{1/2}\Delta_{\varsigma,4}^{1/2})+\epsilon^2b_{\sigma,1}b_{\varsigma,1}\bigr).
\end{array} 
\end{equation}
Combining \eqref{int-kappa} and \eqref{int-kappa-2}, plugging the results into \eqref{eq:BB}
with \eqref{bmax} and the mean terms integrated into $\Delta_{n,2}$ and $\Delta_{m,2}$,  yields (\ref{main-bound}).
\end{proof}

\section{Finite-Sample Convergence}\label{sec:empirical}

We now consider bounds when we use data to estimate the parameters of the Gaussians. We consider three cases: Gaussian mixtures; a single Gaussian on each side of the bridge; and the empirical Monge map. 

\subsection{Gaussian Mixture}\label{sec:gauss_mix_em}

In reality, when one only has samples from Gaussian mixtures one needs specific methods to estimate the parameters. 
We begin by elaborating upon the generality of our main results in the context of using the EM algorithm to estimate
the mixture.  Our following result is essentially an application of Theorem \ref{theo-main} combined with the work in \cite[Theorem 8]{bing} which provides several results for the EM algorithm and the estimation of Gaussian mixtures.
We remark that in the context to be described,  we allow the mixture probabilities to be unknown and be estimated as was discussed in Section \ref{sec:unknown_weights}.
We briefly recall the set-up in \cite{bing}.  Let $\UU=\{1,\dots,k_U\}$ and $\VV=\{1,\dots,k_V\}$ be finite and assume common 
component covariances: $\sigma_u=\sigma$ for all $u\in\UU$ and 
$\varsigma_v=\varsigma$ for all $v\in\VV$. Write $\imath_u:=\imath(\{u\})$, 
${\jmath}_v:=\jmath(\{v\})$, $\underline{\imath}:=\min_u \imath_u>0$, 
$\underline{\jmath}:=\min_v\jmath_v>0$. For the sake of brevity, we assume $J(u,dv)=\jmath(dv)$ as above.
Let 
$(\widehat{\imath},\widehat{m}_u,\widehat{\sigma})$ and 
$(\widehat{\jmath},\widehat{n}_v,\widehat{\varsigma})$ be the EM 
iterates after $t\geq \texttt{C}\log N$ steps (see $\texttt{C}$ in \cite[(3.4)]{bing} ), each based on $N$ i.i.d.\ samples 
from the respective marginal mixture.
\cite{bing} prove that
under some well-separated condition for GMM and good initialization in EM, all the estimates have non-asymptotic bound of rate $O((\frac{d\log N}{N})^{1/2})$.  In the following result, we assume \cite[(3.4), (3.5) and (4.7) ]{bing} 
hold at both ends of the pair of mixtures. 

\begin{cor}\label{cor:em}
Assume (H\ref{ass:cont_cond}-\ref{ass:cont_cond_2},  A\ref{ass-1}-\ref{ass-2}). Then, up to a permutation of the labels, on an event of 
probability at least $1-10N^{-1}$, there exists a $C<\infty$ depending only on $(a,\underline{\texttt{\emph{C}}},\overline{\texttt{\emph{C}}},\cchi,\underline{\imath}, \underline{\jmath},k_U,k_V,\texttt{\emph{C}})$ and constants in \cite[(3.5)]{bing}, such that for any $\epsilon>0$ we have that
\begin{equation}\label{ineq:mix-bridge}
\Da_2\big(\widehat{\PP}\times\widehat{\MM},\PP\times\MM\big)^2
\;\le C 
\left[\left(\frac{d\log N}{N}\right)^{1/2}\left\{1+\left(\frac{d\log N}{N}\right)^{1/2}(\epsilon^{-2}+1)\right\}+\epsilon^2\right].
\end{equation}
\end{cor}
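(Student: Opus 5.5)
We combine Corollary~\ref{cor:unknown}, which allows estimated weights, with the Riccati perturbation estimates behind Theorem~\ref{theo-main}, and then insert the high-probability EM bounds of \cite[Theorem 8]{bing}. The first step is to fix the good event. Applying \cite[Theorem 8]{bing} separately to the two mixtures and taking a union bound gives an event $\mathcal{E}$ of probability at least $1-10N^{-1}$ (we simply add the failure probabilities of the two applications). After a suitable permutation of labels on each side, on $\mathcal{E}$ we have
\[
\max_u\|\widehat m_u-m_u\|_2+\|\widehat\sigma-\sigma\|_F+\max_u|\widehat\imath_u-\imath_u|\le C r_N,
\]
and the analogous bound on the target side, where $r_N:=(d\log N/N)^{1/2}$. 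Because the covariances are common and $\UU,\VV$ are finite, every averaged quantity $\Delta_{\cdot,k}$ (now integrated against $\widehat\imath$ or $\widehat\jmath$) is bounded by $(Cr_N)^k$. We may also assume $r_N\le 1$; otherwise the bound is trivial after enlarging $C$. Then $\|\widehat\sigma\|_2,\|\widehat\varsigma\|_2\le \overline{\texttt{C}}+C$, so $\texttt{B}$ is bounded by a constant.

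Next we bound each term of \eqref{ext} on $\mathcal{E}$.
\begin{itemize}
\item The first term, $\int\widehat\imath\,\Da_2(\nu_{\widehat m_u,\widehat\sigma},\nu_{m_u,\sigma})^2$, is at most $C r_N^2$ by \eqref{eqD2-g} and (A\ref{ass-1}).
\item The weight term uses finiteness of $\UU$. With $\rho_\UU$ the discrete metric, a coupling that keeps the common mass $\min(\widehat\imath_u,\imath_u)$ fixed gives $\Da_2(\widehat\imath,\imath)^2\le \tfrac12\|\widehat\imath-\imath\|_1\le C r_N$. This produces the first-order term $r_N$ in \eqref{ineq:mix-bridge} once the whole bound is squared.
\item Since $J(u,\cdot)=\jmath$ and $\widehat J(u,\cdot)=\widehat\jmath$, the same argument gives $\Da_2(\widehat J(u,\point),J(u,\point))^2\le Cr_N$.
\item The last term, the bridge term integrated against $\widehat\pi=\widehat\imath\otimes\widehat\jmath$, is handled exactly as in the proof of Theorem~\ref{theo-main}. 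We use Lemma~\ref{lem:w2-gauss}, \eqref{ineq:spe}, Lemma~\ref{lem:varphi} and Lemma~\ref{lem:psi}. Since no real integration is needed (all quantities are bounded uniformly in $u,v$), the sup-bounds give
\[
C d\big[r_N^2+\epsilon^{2}+\epsilon^{-2}r_N^2\big].
\]
We must keep the $\epsilon^{-2}$ form from squaring $\epsilon^{-1}\|\psi(\widehat\sigma,\widehat\varsigma)-\psi(\sigma,\varsigma)\|_2$, and note $\|\psi(\widehat\sigma,\widehat\varsigma)-\psi(\sigma,\varsigma)\|_2\le Cr_N$.
\end{itemize}

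Finally we square \eqref{ext} using $(\sum_{i=1}^4 x_i)^2\le 4\sum_i x_i^2$ and collect terms:
\[
\Da_2^2\le C\big[r_N+r_N^2+r_N^2\epsilon^{-2}+\epsilon^2\big]=C\big[r_N\{1+r_N(\epsilon^{-2}+1)\}+\epsilon^2\big],
\]
which is \eqref{ineq:mix-bridge}.

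The main obstacle is the dimension factor $d$. It enters through $\|\cdot\|_F\le\sqrt d\|\cdot\|_2$ in Lemma~\ref{lem:w2-gauss}, and it is absent from the stated right-hand side even though the statement says $C$ does not depend on $d$. To avoid this factor I would work with Frobenius norms throughout. The EM bound controls $\|\widehat\varsigma-\varsigma\|_F$ directly at rate $r_N$, which already carries the $d$ through $r_N$. I would then propagate Frobenius norms through the Riccati map, using that \eqref{square-root-key-estimate} holds for any unitarily invariant norm and that the Lipschitz estimate on $R$ was derived for a general such norm. If that fails, I would accept a $d$ factor in front of the $\epsilon$-terms, absorbed as in the paper's convention for $\epsilon^2$.

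A second point to check is that the bound on $\Da_2(\widehat\imath,\imath)$ gives only a $\sqrt{\cdot}$ rate. This is the source of the leading $r_N$ rather than $r_N^2$, so it is consistent with the statement.
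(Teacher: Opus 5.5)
Your route matches the paper's proof. You apply \cite[Theorem 8]{bing} on both sides on one good event, and a maximal coupling on the finite label sets gives $\Da_2(\widehat{\imath},\imath)^2\le C\Vert\widehat{\imath}-\imath\Vert_1$; this is the source of the leading $r_N:=(d\log N/N)^{1/2}$. You then insert the Riccati estimates of Theorem~\ref{theo-main} into \eqref{ext}. Bounding the bridge term by suprema over the finite label sets, so that integrating against $\widehat{\pi}$ instead of $\pi$ costs nothing, is more careful than the paper's ``combining like terms''.

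The step that fails is the one you flag, and your proposed repair does not work.
\begin{itemize}
\item The EM covariance bound is in operator norm; this is what \eqref{em-s} records. On the good event you therefore only get $\Vert\widehat{\sigma}-\sigma\Vert_F\le\sqrt d\,\Vert\widehat{\sigma}-\sigma\Vert_2\le C\sqrt d\,r_N$.
\item Consequently your first display and your first bullet (``at most $Cr_N^2$'') are off by a factor $d$. Likewise $\Delta_{\sigma,k}$ and $\Delta_{\varsigma,k}$ are only $O((\sqrt d\,r_N)^k)$.
\item Carrying Frobenius norms through the Riccati map cannot help either. The inflation bias $\omega_\epsilon(\sigma,\varsigma)-\omega(\sigma,\varsigma)=-\epsilon\,\psi_\epsilon(\sigma,\varsigma)^{-1}\psi(\sigma,\varsigma)^{-1}$ is a full-rank matrix with Frobenius norm of order $\sqrt d\,\epsilon$. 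So a term $d\,\epsilon^2$ is intrinsic to this route.
\end{itemize}

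In fact the $d$-free claim cannot hold. Projecting onto the source coordinates gives $\Da_2(\widehat{\PP}\times\widehat{\MM},\PP\times\MM)^2\ge\Da_2(\widehat{\PP},\PP)^2$, and the right-hand side does not depend on $\epsilon$. Take a single component with $\sigma=I$ and $\widehat{\sigma}$ the sample covariance. Then this quantity is at least $D_{bw}(\widehat{\sigma},I)^2=\Vert\widehat{\sigma}^{1/2}-I\Vert_F^2$, which is of order $d^2/N$. The right-hand side of \eqref{ineq:mix-bridge} at $\epsilon^2=r_N$ is $C(3r_N+r_N^2)$. These are incompatible once $d^3\gg N\log N$.

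The paper's proof has exactly the same gap: it drops the factor $Cd$ of Theorem~\ref{theo-main} without comment. What your argument, and the paper's, actually proves is
\[
\Da_2\big(\widehat{\PP}\times\widehat{\MM},\PP\times\MM\big)^2\le C\big[r_N+d\,r_N^2(1+\epsilon^{-2})+d\,\epsilon^2\big].
\]
So in your fallback the factor $d$ must multiply the $\epsilon$-free $r_N^2$ term as well, not only the $\epsilon$-terms. That term comes from the covariance part of the first term of \eqref{ext} and from $\Vert\kappa_{u,v}-\widehat{\kappa}_{u,v}\Vert_F^2$.
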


\begin{proof}
    By \eqref{Mnorm} and \cite[Theorem 8]{bing}, there exists a constant $C<\infty$ such that
     \begin{align}
\|\widehat{\imath}-\imath\|_1 &\le
C\sqrt{\frac{d\log N}{N\underline{\imath}}},\label{em-i}\\
\max_{u\in\UU}\|\widehat{m}_u-m_u\|_2^2 &\leq 
\max_{u\in\UU}\lambda_{\max}(\sigma)\,\Vert\widehat{m}_u-m_u\Vert^2_{\sigma}
\le C\frac{d\log N}{N\underline{\imath}},\label{em-m}\\
\|\widehat{\sigma}-\sigma\|_2 &\leq 
\lambda_{\max}(\sigma)\Vert\sigma^{-1/2}(\widehat{\sigma}-\sigma)\sigma^{-1/2}\Vert
\le  C\sqrt{\frac{d\log N}{N}}.\label{em-s}
\end{align}
Since $\UU$ is finite, choose the maximal coupling $\gamma \in \Ca(\widehat{\imath},\imath)$,
\begin{equation}\label{em-wei}
\Da_2\big(\widehat{\imath},\imath\big)^2\le \int \rho_{\UU}(u,\overline{u})^2\gamma(d(u,\overline{u}))
\leq (\sup_{u,\overline{u}\in \UU}\rho_{\UU}(u,\overline{u}))^2\int 1\{u \neq \overline{u}\}\gamma(d(u,\overline{u}))
\le C\|\widehat{\imath}-\imath\|_1.
\end{equation}
Similar results hold for $(\widehat{\jmath},\widehat{n}_v,\widehat{\varsigma})$.
Assembling \eqref{em-i}---\eqref{em-wei} and the results for $(\widehat{\jmath},\widehat{n}_v,\widehat{\varsigma})$ in \eqref{ext} and Theorem~\ref{theo-main},
combining like terms,
we obtain \eqref{ineq:mix-bridge}.
\end{proof}

\begin{rmk}
One can also use the mixture Wasserstein (MW) distance to analyze this bound, see \cite{delon}. The resulting rate will be the same in this case.
\end{rmk}

\subsection{The Empirical Gaussian Bridge}\label{sec:gb}

We now specialize to a single pair of Gaussian marginals, where the estimation error is produced by sampling.
Consider the point-mass case $\imath=\delta_u$, $\jmath=\delta_v$. Write $m=m_u$, $\sigma=\sigma_u$,  $n=n_v$, $\varsigma=\varsigma_v$, $ \kappa=\kappa_{u,v}$ and so on. The product bridge $\PP\times\MM$ then identifies with the Schr\" odinger bridge coupling $\pi^\star$ between $\nu_{m,\sigma}$ and $\nu_{n,\varsigma}$ on $\RR^d\times\RR^d$, and $\widehat{\PP}\times\widehat{\MM}$ with the empirical bridge $\widehat{\pi}^\star(\epsilon)$ built from the sample moments
\begin{equation}\label{sample}
\widehat{m}=\frac{1}{N+1}\sum_{i=1}^{N+1}X_i,
\qquad
\widehat{\sigma}=\frac{1}{N}\sum_{i=1}^{N+1}(X_i-\widehat{m})(X_i-\widehat{m})^{\prime},
\end{equation}
and the analogous $(\widehat{n},\widehat{\varsigma})$, from two independent collections of $N+1$ independent and identically distributed (i.i.d.) samples from $\nu_{m,\sigma}$ and $\nu_{n,\varsigma}$, respectively.

We note that taking expectations, 
$$\EE\left[\Vert\widehat{m}-m\Vert_2^2\right]=\tr(\sigma)/(N+1),
$$ 
and since $N\widehat{\sigma}\stackrel{\rm law}{=}\sigma^{1/2}ZZ^{\prime}\sigma^{1/2}$ with $Z$ a $d\times N$ matrix of i.i.d.~standard Gaussians, all moments of the normalized fluctuation $\sqrt{N}(\widehat{\sigma}-\sigma)$ are bounded (each entry lies in the second Wiener chaos, hence hypercontractive), so that  for any integer $k\ge 1$,
$$\EE\left[\Vert\widehat{\sigma}-\sigma\Vert_F^k\right]\leq C\Vert\sigma\Vert_2^kN^{-k/2}.$$ This yields $\EE\left[\Vert\widehat{\sigma}\Vert^k_2\right] \le C_k$ for some constant $C_k$. The same holds for $(\widehat{n},\widehat{\varsigma})$. In this context, by Fubini's theorem, one can check there exists a $\texttt{b}<\infty$, such that $$\EE[\texttt{B}^2]\le \texttt{b}.$$
Using Cauchy-Schwarz inequality, one can easily check the following statements.
\begin{cor}\label{cor-gauss-rate}
Assume (H\ref{ass:cont_cond},  A\ref{ass-1}-\ref{ass-2}).  Then there exists a $C<\infty$ only depending on $(a,\texttt{b},\underline{\texttt{\emph{C}}},\overline{\texttt{\emph{C}}},\cchi)$ such that for any $\epsilon>0$, $d\geq 1$ and $N\geq 1$ we have that
\begin{equation}\label{rate-gauss}
\EE\left[\Da_2\left(\pi^\star,\widehat{\pi}^\star(\epsilon)\right)^2\right]
\leq
Cd\left(\frac{1+\epsilon^{-2}}{N}+\epsilon^2\right).
\end{equation}
\end{cor}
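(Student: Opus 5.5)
The plan is to apply Theorem~\ref{theo-main} pathwise, on every realisation of the samples, and then take expectations. In the point-mass case $\imath=\delta_u$, $\jmath=\delta_v$ the averaged errors in \eqref{def-deltas} reduce to single quantities: $\Delta_{m,2}=\Vert\widehat{m}-m\Vert_2^2$, $\Delta_{\sigma,k}=\Vert\widehat{\sigma}-\sigma\Vert_F^k$, and similarly for $(\widehat{n},\widehat{\varsigma})$. In the same way, $\texttt{B}$ becomes a polynomial in $\Vert\widehat{\sigma}\Vert_2^{1/2}$ and $\Vert\widehat{\varsigma}\Vert_2^{1/2}$. Assumptions (A\ref{ass-1}-\ref{ass-2}) and (H\ref{ass:cont_cond}) depend only on the true parameters, so \eqref{main-bound} holds almost surely with a deterministic $C$ depending on $(a,\underline{\texttt{C}},\overline{\texttt{C}},\cchi)$. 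By construction the left-hand side is $\Da_2(\pi^\star,\widehat{\pi}^\star(\epsilon))^2$.

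Next, take expectations term by term. The terms without $\texttt{B}$ are immediate. We have $\EE\Vert\widehat{m}-m\Vert_2^2=\tr(\sigma)/(N+1)\le d\,\overline{\texttt{C}}/N$, and likewise for $\widehat{n}$; these produce an extra factor $d$, so I will have to check how it combines with the prefactor $d$ in \eqref{main-bound}, or else bound $\tr(\sigma)$ by $d\overline{\texttt{C}}$ and absorb the resulting $d$ into $C$ as the paper's constants permit. The hypercontractive moment bound gives $\EE\Vert\widehat{\sigma}-\sigma\Vert_F^2\le C\overline{\texttt{C}}^2/N$.

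For the terms multiplied by $\texttt{B}$, apply Cauchy--Schwarz: $\EE[\texttt{B}X]\le \EE[\texttt{B}^2]^{1/2}\EE[X^2]^{1/2}\le \texttt{b}^{1/2}\EE[X^2]^{1/2}$. The bracket $X$ is treated piece by piece. For $X=\Delta_{\varsigma,2}$ we get $\EE[\Delta_{\varsigma,2}^2]^{1/2}=(\EE\Vert\widehat{\varsigma}-\varsigma\Vert_F^4)^{1/2}\le C/N$. The square-root terms are where the rate must be checked: $\Delta_{\varsigma,4}^{1/2}=\Vert\widehat{\varsigma}-\varsigma\Vert_F^2$, so $\EE[\Delta_{\varsigma,4}]^{1/2}\le C/N$. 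The same moment bound with $k=4$ handles $\epsilon^{-2}(\Delta_{\sigma,2}+\Delta_{\sigma,4}^{1/2}+\Delta_{\varsigma,4}^{1/2})$, giving $C\epsilon^{-2}/N$. The bias term contributes $\texttt{b}^{1/2}\epsilon^2$ directly. Since every estimate is of order $1/N$, the quantities $1/(N+1)$ and $1/N$ are interchangeable up to constants.

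Collecting the pieces gives $Cd\big((1+\epsilon^{-2})/N+\epsilon^2\big)$, with $C$ depending on $(a,\texttt{b},\underline{\texttt{C}},\overline{\texttt{C}},\cchi)$.

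I expect the main obstacle to be justifying the moment bound $\EE\Vert\widehat{\sigma}-\sigma\Vert_F^k\le C N^{-k/2}$ with a dimension-free constant, together with $\EE[\texttt{B}^2]\le\texttt{b}$. The Frobenius norm of a $d\times d$ Wishart fluctuation naturally scales like $d/\sqrt{N}$, so the constants may hide powers of $d$. My approach will be to take the paper's stated bound (and $\texttt{b}$) as given, and to note that any residual $d$-dependence enters only through these constants and not through the structure of the argument.
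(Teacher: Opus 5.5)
Your route is exactly the paper's. Its proof of Corollary~\ref{cor-gauss-rate} amounts to: apply Theorem~\ref{theo-main} in the point-mass case, take expectations, use Cauchy--Schwarz against $\EE[\texttt{B}^2]\le\texttt{b}$, and plug in the moment bounds for $\widehat m,\widehat\sigma$. However, the weak point you flagged is a genuine gap, which the paper shares, and it is not a matter of bookkeeping.

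First, you cannot absorb a factor $d$ into $C$, because $C$ must be uniform in $d\ge 1$. For the mean terms this is repairable by re-entering the proof of Theorem~\ref{theo-main}. There $\Delta_{m,2},\Delta_{n,2}$ appear without the prefactor $d$, which only arises from $\Vert\kappa-\widehat\kappa\Vert_F^2\le d\,\Vert\kappa-\widehat\kappa\Vert_2^2$. So $\EE\Delta_{m,2}=\tr(\sigma)/(N+1)\le d\,\overline{\texttt{C}}/(N+1)$ fits into $Cd/N$.

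Second, and fatally, the dimension-free bound $\EE\Vert\widehat\sigma-\sigma\Vert_F^k\le C N^{-k/2}$ that you take as given is false. Since $N\widehat\sigma$ is Wishart,
\[
\EE\Vert\widehat\sigma-\sigma\Vert_F^2=\frac{\tr(\sigma^2)+(\tr\sigma)^2}{N},
\]
which equals $(d+d^2)/N$ for $\sigma=I$. Carried out with correct moments, your argument (using Theorem~\ref{theo-main} as a black box) gives $Cd\,\bigl(d^2(1+\epsilon^{-2})/N+\epsilon^2\bigr)$, not \eqref{rate-gauss}.

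No other argument can recover the $d$-linear rate either. The source marginal of $\widehat\pi^\star(\epsilon)$ is the un-inflated $\nu_{\widehat m,\widehat\sigma}$, so projecting onto the first coordinate gives $\Da_2(\pi^\star,\widehat\pi^\star(\epsilon))^2\ge D_{bw}(\sigma,\widehat\sigma)^2$ for every $\epsilon$. Now take $\sigma=\varsigma=\beta=\tau=I$, so that $\cchi=I$, $\underline{\texttt{C}}=\overline{\texttt{C}}=1$ and $a\le 1$, and take $N\ge 4d$.
\begin{itemize}
\item Outside an event of probability at most $e^{-N/8}$ we have $\lambda_{\max}(\widehat\sigma)\le 4$.
\item On that event, $D_{bw}(I,\widehat\sigma)^2=\sum_i\bigl(1-\lambda_i(\widehat\sigma)^{1/2}\bigr)^2\ge\frac19\Vert\widehat\sigma-I\Vert_F^2$.
\item Hence the left-hand side of \eqref{rate-gauss} is $\gtrsim d^2/N$.
\end{itemize}
In the same regime $\EE[\texttt{B}^2]$ is bounded independently of $d$. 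Meanwhile, the right-hand side minimised over $\epsilon$ is of order $d/\sqrt N$. So, with genuinely dimension-free constants, the inequality fails for $\sqrt N\ll d\le N/4$. Any valid bound must contain a term of order $d^2/N$. The stated $d$-dependence therefore holds only if powers of $d$ are hidden in $\texttt{b}$ or in the unnamed moment constant, and neither your proof nor the paper's establishes it as written.
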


\begin{rmk}[One-sided estimation]\label{rmk-one-sided}
If one of the two marginals is known exactly (say $(n,\varsigma)$ is known and only $(m,\sigma)$ is estimated), the same argument yields (\ref{rate-gauss}) with the terms $\Delta_{n,2},\Delta_{\varsigma,2}$ set to zero.
\end{rmk}


\subsection{The Monge Limit}\label{sec:monge-limit}

Let $T_{\#}\nu$ denote the pushforward of $\nu$ by a measurable map $T$.
Under the single Gaussian marginals situation,
we consider the quadratic-cost regime $\beta=I$ and $\tau=tI$ with $t>0$, so that $\cchi=t^{-1}I$.  By \cite[Corollary~3.13]{adm-24},
\begin{equation}\label{monge-approx}
\Big\Vert\frac{\kappa}{t}-\sigma^{-1}\sharp\varsigma\Big\Vert_2\vee\Big\Vert\frac{r}{t}-\overline{\varpi}^{1/2}\Big\Vert_2\leq c\,t,
\qquad
\overline{\varpi}^{-1}:=\varsigma^{1/2}\sigma\varsigma^{1/2},
\end{equation}
for some finite constant $c$ depending only on $(\sigma,\varsigma)$. In this context we have the limiting Monge map
$$
Y(x)\longrightarrow_{t\rightarrow 0} T(x):=n+(\sigma^{-1}\sharp\varsigma)\,(x-m).
$$ 
Indeed, $\kappa\cchi=t^{-1}\kappa\to\sigma^{-1}\sharp\varsigma$ by \eqref{monge-approx}, whilst $\EE\left[\Vert\kappa^{1/2}G\Vert_2^2\right]=\tr(\kappa)\leq d\,C_0\,t$ by \eqref{ricc-infsup-2} and assuming (A\ref{ass-1}).  Note that assuming (A\ref{ass-1}),
$\Vert\sigma^{-1}\sharp\varsigma\Vert_2^2\leq\Vert\varsigma\Vert_2\Vert\sigma^{-1}\Vert_2\leq \overline{\texttt{C}}/\underline{\texttt{C}}$.
The map $T$ is the optimal Monge transport pushing $\nu_{m,\sigma}$ to $\nu_{n,\varsigma}$ for the quadratic cost, see \eqref{eq:monge}; it satisfies the Riccati identity $(\sigma^{-1}\sharp\varsigma)\,\sigma\,(\sigma^{-1}\sharp\varsigma)=\varsigma$, hence $T_{\#}\,\nu_{m,\sigma}=\nu_{n,\varsigma}$.

Let $(\widehat{m},\widehat{\sigma})$ and $(\widehat{n},\widehat{\varsigma})$ be estimates of the Gaussian parameters $(m,\sigma)$ and $(n,\varsigma)$, respectively. 
Recall the definition
\[\zeta:=
\sigma^{-1}\sharp\varsigma=\varsigma\sharp\sigma^{-1}
=\sigma^{-1/2}\bigl(\sigma^{1/2}\varsigma\sigma^{1/2}\bigr)^{1/2}\sigma^{-1/2}.
\]
For a fixed inflation parameter $\epsilon>0$ we set
\[
\widehat{\sigma}_{\epsilon}:=\widehat{\sigma}+\epsilon I\in\Sa_d^+,
\qquad
\widehat{\zeta}_{\epsilon}:=\widehat{\sigma}_{\epsilon}^{-1/2}\bigl(\widehat{\sigma}^{1/2}\widehat{\varsigma}\widehat{\sigma}^{1/2}\bigr)^{1/2}\widehat{\sigma}_{\epsilon}^{-1/2}\in\Sa_d^+,
\]
and define the empirical Monge map
\begin{equation}\label{empirical-monge}
\widehat{T}_{\epsilon}(x)
:=\widehat{n}+\widehat{\zeta}_{\epsilon}\,(x-\widehat{m}).
\end{equation}
The estimation errors are quantified by~\eqref{def-deltas}.

\begin{theo}\label{theo-monge}
Assume (A\ref{ass-1}).  Then there exists a $C<\infty$ 
only depending on $(\underline{\texttt{\emph{C}}},\overline{\texttt{\emph{C}}})$
such that for every $\epsilon> 0$
$d\geq 1$ and $N\geq 1$ we have that
\begin{align}
\EE\left[\int\nu_{{m},{\sigma}}(dx)\,
\bigl\Vert\widehat{T}_{\epsilon}(x)-T(x)\bigr\Vert_2^2\right]
&\le
Cd\left(\frac{1+\epsilon^{-2}}{N}+\epsilon^2\right).\label{ineq:monge2}
\end{align}
\end{theo}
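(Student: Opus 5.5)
The approach is to decompose the map error pointwise and then integrate against $\nu_{m,\sigma}$. Writing
\[
\widehat{T}_{\epsilon}(x)-T(x)=(\widehat{n}-n)+\zeta(m-\widehat{m})+(\widehat{\zeta}_{\epsilon}-\zeta)(x-\widehat{m}),
\]
and using $(x+y+z)^2\le 3(x^2+y^2+z^2)$ together with $\int\nu_{m,\sigma}(dx)(x-\widehat{m})(x-\widehat{m})'=\sigma+(m-\widehat{m})(m-\widehat{m})'$, we obtain
\[
\int\nu_{m,\sigma}(dx)\Vert\widehat{T}_{\epsilon}(x)-T(x)\Vert_2^2\le 3\Vert\widehat{n}-n\Vert_2^2+3\Vert\zeta\Vert_2^2\Vert\widehat{m}-m\Vert_2^2+3\bigl(\overline{\texttt{C}}+\Vert\widehat{m}-m\Vert_2^2\bigr)\Vert\widehat{\zeta}_{\epsilon}-\zeta\Vert_F^2 .
\]
The bound $\Vert\zeta\Vert_2^2\le\overline{\texttt{C}}/\underline{\texttt{C}}$ was already noted. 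The facts $\EE\Vert\widehat{m}-m\Vert_2^2=\tr(\sigma)/(N+1)\le d\overline{\texttt{C}}/N$ and the analogous bound for $\widehat n$ then give the $d/N$ contribution. The last term is handled by Cauchy--Schwarz in expectation, using fourth moments of $\Vert\widehat{m}-m\Vert_2$ (Gaussian, of order $d^2/N^2$) and of $\Vert\widehat\zeta_\epsilon-\zeta\Vert_F$. It therefore suffices to show
\[
\EE\Vert\widehat{\zeta}_{\epsilon}-\zeta\Vert_F^2\le Cd\left(\frac{1+\epsilon^{-2}}{N}+\epsilon^2\right),
\]
together with a companion fourth-moment bound, which follows by the same argument since all moments of $\sqrt N(\widehat\sigma-\sigma)$ are bounded.

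For the key term, I would write $\zeta=\sigma^{-1/2}S\sigma^{-1/2}$ with $S:=(\sigma^{1/2}\varsigma\sigma^{1/2})^{1/2}$, and $\widehat\zeta_\epsilon=\widehat\sigma_\epsilon^{-1/2}\widehat S\widehat\sigma_\epsilon^{-1/2}$ with $\widehat S:=(\widehat\sigma^{1/2}\widehat\varsigma\widehat\sigma^{1/2})^{1/2}$. I would then telescope, in the spirit of the decomposition of $\kappa_{u,v}-\widehat\kappa_{u,v}$ preceding \eqref{ineq:spe}:
\[
\widehat\zeta_\epsilon-\zeta=(\widehat\sigma_\epsilon^{-1/2}-\sigma^{-1/2})\widehat S\widehat\sigma_\epsilon^{-1/2}+\sigma^{-1/2}(\widehat S-S)\widehat\sigma_\epsilon^{-1/2}+\sigma^{-1/2}S(\widehat\sigma_\epsilon^{-1/2}-\sigma^{-1/2}).
\]
The square-root differences are controlled by the Ando--Hemmen inequality \eqref{square-root-key-estimate}. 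For $\widehat S-S$, since $\lambda_{\min}(\sigma^{1/2}\varsigma\sigma^{1/2})\ge\underline{\texttt{C}}^2$, we get $\Vert\widehat S-S\Vert_F\le\underline{\texttt{C}}^{-1}\Vert\widehat\sigma^{1/2}\widehat\varsigma\widehat\sigma^{1/2}-\sigma^{1/2}\varsigma\sigma^{1/2}\Vert_F$. This is in turn bounded by products of $\Vert\widehat\sigma\Vert_2^{1/2},\Vert\widehat\varsigma\Vert_2$ with $\Vert\widehat\varsigma-\varsigma\Vert_F$, and by $\Vert\widehat\sigma^{1/2}-\sigma^{1/2}\Vert_F\le\underline{\texttt{C}}^{-1/2}\Vert\widehat\sigma-\sigma\Vert_F$, as in Lemma~\ref{lem:psi}. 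The prefactors $\Vert\widehat\sigma_\epsilon^{-1/2}\Vert_2\le\epsilon^{-1/2}$ and $\Vert\widehat S\Vert_2\le(\Vert\widehat\sigma\Vert_2\Vert\widehat\varsigma\Vert_2)^{1/2}$ have all moments bounded.

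The main obstacle is the inverse-root difference $\widehat\sigma_\epsilon^{-1/2}-\sigma^{-1/2}$, where the $\epsilon$-trade-off appears. I would split it as $(\widehat\sigma_\epsilon^{-1/2}-\sigma_\epsilon^{-1/2})+(\sigma_\epsilon^{-1/2}-\sigma^{-1/2})$ with $\sigma_\epsilon:=\sigma+\epsilon I$. The deterministic bias term equals $\sigma_\epsilon^{-1/2}\sigma^{-1/2}(\sigma^{1/2}-\sigma_\epsilon^{1/2})$, which has spectral norm at most $\underline{\texttt{C}}^{-3/2}\epsilon$; its squared Frobenius norm is therefore at most $d\,\underline{\texttt{C}}^{-3}\epsilon^2$. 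For the stochastic term, I would use $A^{-1/2}-B^{-1/2}=A^{-1/2}(B^{1/2}-A^{1/2})B^{-1/2}$ together with \eqref{square-root-key-estimate}. This gives
\[
\Vert\widehat\sigma_\epsilon^{-1/2}-\sigma_\epsilon^{-1/2}\Vert_F\le\epsilon^{-1/2}\underline{\texttt{C}}^{-1/2}\,\frac{\Vert\widehat\sigma-\sigma\Vert_F}{\lambda_{\min}^{1/2}(\widehat\sigma_\epsilon)+\lambda_{\min}^{1/2}(\sigma_\epsilon)}\le\epsilon^{-1/2}\underline{\texttt{C}}^{-1}\Vert\widehat\sigma-\sigma\Vert_F.
\]
This is where the inflation is used: it keeps the possibly singular $\widehat\sigma$ from blowing up. Its square is $\epsilon^{-1}$ times an $O(d/N)$ quantity in expectation. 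Combining this with the extra $\epsilon^{-1}$ from the prefactor $\widehat\sigma_\epsilon^{-1/2}$ yields the $d\epsilon^{-2}/N$ term.

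Finally, I would take expectations using $\EE\Vert\widehat\sigma-\sigma\Vert_F^k\le C\Vert\sigma\Vert_2^kN^{-k/2}$, its analogue for $\widehat\varsigma$, bounded moments of $\Vert\widehat\sigma\Vert_2,\Vert\widehat\varsigma\Vert_2$, and Cauchy--Schwarz to separate random prefactors from error terms. Collecting the terms and absorbing mixed terms such as $\epsilon^{-1}/N\le(1+\epsilon^{-2})/N$ gives \eqref{ineq:monge2}. The factor $d$ arises from the norm equivalence $\Vert\cdot\Vert_F\le\sqrt d\Vert\cdot\Vert_2$, used wherever spectral bounds are converted.
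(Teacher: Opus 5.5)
Your architecture is essentially the paper's: the same telescoping of $\widehat\zeta_\epsilon-\zeta$, Ando--Hemmen for the square roots, and Cauchy--Schwarz against sample-covariance moments. Pairing $m-\widehat m$ with the deterministic $\zeta$ is cleaner than the paper's choice, which pays $\epsilon^{-2}$ through $\Vert\widehat\zeta_\epsilon\Vert_2^2$.

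\textbf{The gap: the bias term.} In the first telescoping term $(\widehat\sigma_\epsilon^{-1/2}-\sigma^{-1/2})\widehat S\,\widehat\sigma_\epsilon^{-1/2}$, the deterministic piece $\sigma_\epsilon^{-1/2}-\sigma^{-1/2}$ is also multiplied by the trailing factor $\widehat\sigma_\epsilon^{-1/2}$. The only control you give on that factor is $\Vert\widehat\sigma_\epsilon^{-1/2}\Vert_2\le\epsilon^{-1/2}$, which is attained whenever $\widehat\sigma$ is singular. With your own bookkeeping this piece contributes $d\,\underline{\texttt{C}}^{-3}\epsilon^{2}\cdot\epsilon^{-1}\Vert\widehat S\Vert_2^2=O(d\epsilon)$, not $O(d\epsilon^2)$. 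A term of size $d\epsilon$ cannot be absorbed into $Cd((1+\epsilon^{-2})/N+\epsilon^2)$; take $\epsilon=N^{-1/4}$.

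The missing idea is to bound $\widehat S$ and $\widehat\sigma_\epsilon^{-1/2}$ jointly. Since $\widehat S^2=\widehat\sigma^{1/2}\widehat\varsigma\widehat\sigma^{1/2}$ and $\widehat\sigma^{1/2}\widehat\sigma_\epsilon^{-1/2}$ has eigenvalues $(\lambda/(\lambda+\epsilon))^{1/2}\le 1$, one gets, uniformly in $\epsilon$,
\[
\Vert\widehat S\,\widehat\sigma_\epsilon^{-1/2}\Vert_2^2=\Vert\widehat\sigma_\epsilon^{-1/2}\widehat\sigma^{1/2}\widehat\varsigma\,\widehat\sigma^{1/2}\widehat\sigma_\epsilon^{-1/2}\Vert_2\le\Vert\widehat\varsigma\Vert_2 .
\]
The first term is then $O\bigl((\epsilon^{-1}\Vert\widehat\sigma-\sigma\Vert_F^2+d\epsilon^2)\Vert\widehat\varsigma\Vert_2\bigr)$. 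The crude factor $\epsilon^{-1/2}$ now appears only in front of the purely stochastic $\widehat S-S$, where it gives harmless $\epsilon^{-1}/N$ terms. Alternatively, use $\Vert\widehat\sigma_\epsilon^{-1/2}\Vert_2\le\underline{\texttt{C}}^{-1/2}+\epsilon^{-1/2}\underline{\texttt{C}}^{-1}\Vert\widehat\sigma-\sigma\Vert_2$ together with $\epsilon/N\le 1/N+\epsilon^2$. The paper's \eqref{ineq:zeta} silently treats $\widehat\sigma_\epsilon^{-1/2}$ as $O(1)$ at this point, so the gap is shared, but it still has to be closed.

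\textbf{A second problem: the dimension count.} Here you inherit the paper's moment claim. In fact $\EE\Vert\widehat\sigma-\sigma\Vert_F^2=(\tr(\sigma^2)+(\tr\sigma)^2)/N$, which is of order $d^2/N$, not $d/N$. The cross term with $\Vert\widehat m-m\Vert_2^2$ factorizes exactly, because $\widehat m$ is independent of $(\widehat\sigma,\widehat\varsigma)$, so no fourth moments are needed. But it carries an extra factor $\tr(\sigma)/(N+1)$, which is not $O(1)$ unless $N\gtrsim d$.

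No argument can give \eqref{ineq:monge2} with $C$ depending only on $(\underline{\texttt{C}},\overline{\texttt{C}})$. Take $\sigma=\varsigma=I_d$, $m=n=0$ and $N<d$. Then $\widehat\sigma$ has rank at most $N$, and $\widehat\zeta_\epsilon$ vanishes on $\ker\widehat\sigma$, because $\widehat S$ does and $\widehat\sigma_\epsilon^{-1/2}$ maps $\ker\widehat\sigma$ to itself. So the left-hand side is at least $\Vert\widehat\zeta_\epsilon-I\Vert_F^2\ge d-N$. For $d=2N$ and $\epsilon=N^{-1/4}$, the right-hand side equals $2C(1+2\sqrt N)=O(\sqrt d)$, which is eventually smaller.

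Once the bias term is repaired, what your argument actually proves is the $(N,\epsilon)$-rate $(1+\epsilon^{-2})/N+\epsilon^2$ with a constant that depends on $d$.
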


\begin{proof}
We have
\[
\widehat{T}_{\epsilon}(x)-T(x)
=(\widehat{n}-n)
+\widehat{\zeta}_{\epsilon}\,(\widehat{m}-m)
+(\widehat{\zeta}_{\epsilon}
-\zeta)(x-\widehat{m}).
\]
Taking squared norm, using $(a+b+c)^2\leq3(a^2+b^2+c^2)$ and integrating w.r.t.~$\nu_{m,\sigma}(dx)$ gives
\begin{equation}\label{monge-decomp}
\int\nu_{{m},{\sigma}}(dx)\,
\bigl\Vert\widehat{T}_{\epsilon}(x)-T(x)\bigr\Vert_2^2
\leq
3\Delta_{n,2}
+3\Vert\widehat{\zeta}_{\epsilon}\Vert_2^2\,\Delta_{m,2}
+3\tr(\sigma)\,
\Vert\widehat{\zeta}_{\epsilon}
-\zeta\Vert_F^2.
\end{equation}
To control the expectation of the R.H.S.~of the above formula,   the first term is relatively simple,
so we consider the next two in turn.

As argued above $\Vert\sigma^{-1}\sharp\varsigma\Vert_2^2\leq \overline{\texttt{C}}/\underline{\texttt{C}}$ and
for the empirical counterpart, using $\lambda_{\min}(\widehat{\sigma}_{\epsilon})\geq\epsilon$,
\[
\Vert\widehat{\zeta}_{\epsilon}\Vert_2^2
\leq\Vert\widehat{\varsigma}\Vert_2\,\Vert{\widehat{\sigma}}\Vert_2\Vert\widehat{\sigma}_{\epsilon}^{-1}\Vert_2^2
\leq \epsilon^{-2}\Vert\widehat{\varsigma}\Vert_2\,\Vert{\widehat{\sigma}}\Vert_2.
\]
Taking expectations and using Cauchy--Schwarz,
\[
\EE\left[\Vert\widehat{\zeta}_{\epsilon}\Vert_2^2\,\Delta_{m,2}\right]
\leq\frac{1}{\epsilon^2}\EE\left[\Vert\widehat{\varsigma}\Vert_2^2\Vert\widehat{\sigma}\Vert_2^2\right]^{1/2}\EE\left[\Delta_{m,2}^2\right]^{1/2}.
\]
Since all moments of the Gaussian sample covariance are finite, there exists some finite constant $C$, such that
\begin{equation}\label{ineq:deltam}
\EE\left[\Vert\widehat{\zeta}_{\epsilon}\Vert_2^2\,\Delta_{m,2}\right]
\leq \frac{C}{\epsilon^2}\,\EE\left[\Delta_{m,2}^2\right]^{1/2}.
\end{equation}

We now consider the third term on the R.H.S.~of \eqref{monge-decomp}; set
\[
\widehat{A}_{\epsilon}:=\widehat{\sigma}_{\epsilon}^{-1/2},\quad
\widehat{B}:=\bigl(\widehat{\sigma}^{1/2}\widehat{\varsigma}\widehat{\sigma}^{1/2}\bigr)^{1/2},\quad
A:=\sigma^{-1/2},\quad
B:=\bigl(\sigma^{1/2}\varsigma\sigma^{1/2}\bigr)^{1/2},
\]
so that $\widehat{\zeta}_{\epsilon}=\widehat{A}_{\epsilon}\widehat{B}\widehat{A}_{\epsilon}$ and
$\zeta=ABA$.  Then
\begin{equation}\label{ineq:geom-mean-diff}
\widehat{\zeta}_{\epsilon}-\zeta=\widehat{A}_{\epsilon}\widehat{B}\widehat{A}_{\epsilon}-ABA
=(\widehat{A}_{\epsilon}-A)\widehat{B}\widehat{A}_{\epsilon}
+A(\widehat{B}-B)\widehat{A}_{\epsilon}
+AB(\widehat{A}_{\epsilon}-A).
\end{equation}
By \eqref{square-root-key-estimate} and
$\lambda_{\min}(\sigma^{-1})\geq \overline{\texttt{C}}^{-1}$,
\[
\Vert\widehat{A}_{\epsilon}-A\Vert_F
=\Vert\widehat{\sigma}_{\epsilon}^{-1/2}-\sigma^{-1/2}\Vert_F
\leq\frac{1}{\sqrt{\overline{\texttt{C}}}}
\Vert\widehat{\sigma}_{\epsilon}^{-1}-\sigma^{-1}\Vert_F.
\]
Now, in a 
similar manner to Lemma~\ref{lem:omega}, using $\Vert\widehat{\sigma}_{\epsilon}^{-1}\Vert_2\leq\epsilon^{-1}$ and
$\Vert\sigma^{-1}\Vert_2\leq \underline{\texttt{C}}^{-1}$,
\begin{equation*}
\Vert\widehat{\sigma}_{\epsilon}^{-1}-\sigma^{-1}\Vert_F
\leq\frac{1}{\epsilon \underline{\texttt{C}}}\Vert\widehat{\sigma}-\sigma\Vert_F
+\frac{\epsilon}{\underline{\texttt{C}}^2}.
\end{equation*}
Consequently there exists some finite constant $C$,
\begin{equation}\label{a-diff}
\Vert\widehat{A}_{\epsilon}-A\Vert_F
\leq C\sqrt{d}
\left(\frac{1}{\epsilon }\Delta_{\sigma,1}
+\epsilon\right).
\end{equation}
 By \eqref{square-root-key-estimate} and (A\ref{ass-1}),
\begin{equation}\label{ineq:B}
    \Vert\widehat{B}-B\Vert_F
\leq\frac{1}{\underline{\texttt{C}}}
\Vert\widehat{\sigma}^{1/2}\widehat{\varsigma}\widehat{\sigma}^{1/2}-\sigma^{1/2}\varsigma\sigma^{1/2}\Vert_F.
\end{equation}
Similarly,
\begin{eqnarray}\label{ineq:B2}
\Vert\widehat{\sigma}^{1/2}\widehat{\varsigma}\widehat{\sigma}^{1/2}-\sigma^{1/2}\varsigma\sigma^{1/2}\Vert_F
&\leq & \Vert\widehat{\sigma}^{1/2}-\sigma^{1/2}\Vert_F\,\Vert\widehat{\varsigma}\Vert_2\,\Vert\widehat{\sigma}^{1/2}\Vert_2
+\Vert\sigma^{1/2}\Vert_2\,\Vert\widehat{\varsigma}-\varsigma\Vert_F\,\Vert\widehat{\sigma}^{1/2}\Vert_2 + \notag\\
& &\Vert\sigma^{1/2}\Vert_2\,\Vert\varsigma\Vert_2\,\Vert\widehat{\sigma}^{1/2}-\sigma^{1/2}\Vert_F.\notag\\
&\le  & \sqrt{d}\Big({\underline{\texttt{C}}^{-1/2}}\,\Vert\widehat{\varsigma}\Vert_2\,\Vert\widehat{\sigma}^{1/2}\Vert_2\Delta_{\sigma,1}
+\Vert\sigma^{1/2}\Vert_2\,\Vert\widehat{\sigma}^{1/2}\Vert_2\Delta_{\varsigma,1}
+\notag\\ & &
\underline{\texttt{C}}^{-1/2}\Vert\sigma^{1/2}\Vert_2\,\Vert\varsigma\Vert_2\,\Delta_{\sigma,1}\Big). 
\end{eqnarray}
Combining \eqref{ineq:geom-mean-diff}-\eqref{ineq:B2}, by Cauchy-Schwarz inequality and the boundedness of all moments of the Gaussian sample covariance, there exists some finite constant $C$, such that
\begin{equation}\label{ineq:zeta}
\EE[\Vert\widehat{\zeta}_{\epsilon}-\zeta\Vert_F^2]\le Cd\left(\frac{1}{\epsilon^2}\EE[\Delta_{\sigma,2}^2]^{1/2}+\epsilon^2+\EE[\Delta_{\sigma,2}^2]^{1/2}+\EE[\Delta_{\varsigma,2}^2]^{1/2}\right).
\end{equation}

Now using \eqref{ineq:deltam} and \eqref{ineq:zeta},  one obtains 
$$
\EE\left[\int\nu_{{m},{\sigma}}(dx)\,
\bigl\Vert\widehat{T}_{\epsilon}(x)-T(x)\bigr\Vert_2^2\right]
\leq
$$
$$
Cd\left(
\epsilon^{-2}\EE\left[\Delta_{m,2}^2\right]^{1/2}+\EE\left[\Delta_{n,2}\right]+(\epsilon^{-2}+1)\EE\left[\Delta_{\sigma,2}^2\right]^{1/2}+\EE\left[\Delta_{\varsigma,2}^2\right]^{1/2}+\epsilon^2
\right).
$$
Standard results yield \eqref{ineq:monge2}.
\end{proof}

\section{Numerical Illustrations}\label{sec:num}

We illustrate the theoretical results on three families of experiments. Section~\ref{sec:num-mixture} illustrates the mixture bridge on
a two-component, two-dimensional toy model, including the case where the mixture
components are themselves estimated from unlabeled samples by an EM
algorithm. 
Section~\ref{sec:num-gauss} considers a single pair of Gaussian marginals in
dimension $d=20$, with an ill-conditioned reference transition and the Monge map: they validate
the convergence rate of the empirical bridge
(Corollary~\ref{cor-gauss-rate}, Theorem~\ref{theo-monge}) and probe the role of the inflation parameter
$\epsilon$.

\subsection{Gaussian Mixtures}\label{sec:num-mixture}

We illustrate the mixture bridge of Section~\ref{sec:product} 
on a two-component, two-dimensional model with common component covariances. 
The source and target mixtures are
$p=\sum_{u=1}^2 w_u\,\nu_{m_u,\sigma}$ and
$q=\sum_{v=1}^2 w_v^{\prime}\,\nu_{n_v,\varsigma}$ with weights
$w=(0.6,0.4)$, $w^{\prime}=(0.5,0.5)$, means
$m_1=(0,0)$, $m_2=(3.5,3)$, $n_1=(4.5,0)$, $n_2=(0.5,4.5)$, and common
covariance matrices
$$
\sigma=\begin{pmatrix}1&0.3\\0.3&1\end{pmatrix},
\qquad
\varsigma=\begin{pmatrix}0.9&0.2\\0.2&0.9\end{pmatrix}.
$$
The reference transition is $\tau=tI$, $\beta=I$ with $t=0.5$, and the label
bridge is independent, $J(u,\cdot)=\jmath$, so that $\pi_{u,v}=w_u w_v^{\prime}$.
Both mixtures are fitted from $N$ i.i.d.\ samples by the EM algorithm under 
the equal-covariance Gaussian mixture model (the \texttt{EEE} model of 
\texttt{mclust}), the bridges are built from the fitted parameters with 
inflation level $\epsilon=10^{-4}$, and labels are aligned post hoc with the 
true components.

As a benchmark, we consider a supervised oracle that observes the true
component labels alongside the samples.
The oracle
estimates the mixture parameters by the within-component empirical
quantities according to \eqref{sample}.
We use the squared upper bound of \eqref{ext}, which in this 
finite-label, independent-bridge setting reads
\begin{equation*}
\begin{array}{l}
\mathrm{UB}\big(\widehat{\PP}\times\widehat{\MM},\PP\times\MM\big)
:=
(1+a^2)^{1/2}\left(\sum_{u}\widehat{w}_u\,
\Da_2^2\big(\nu_{\widehat{m}_u,\widehat{\sigma}},\nu_{m_u,\sigma}\big)\right)^{1/2}
+\sqrt{2}(1+a^2)^{1/2}\,\Da_2\big(\widehat{\jmath},\jmath\big)\\[4pt]
\displaystyle
\qquad\qquad
+(1+a^2)\,\Da_2\big(\widehat{\imath},\imath\big)
+\sqrt{2}\left(\sum_{u,v}\widehat{\pi}_{u,v}\,
\mathbb{E}_{x\sim\nu_{\widehat{m}_u,\widehat{\sigma}}}
\Da_2^2\big(\widehat{\BB}_{u,v}(x,\point),\BB_{u,v}(x,\point)\big)\right)^{1/2},
\end{array}
\end{equation*}
We repeat each experiment $50$ times for 
$N\in\{100,200,500,1000,2000,5000\}$.

Figure~\ref{fig:mix} displays $\mathbb{E}[\mathrm{UB}^2]$ against $N$ on
the log-log scale, for both the EM estimator and the supervised oracle
(known labels). Two observations are in order. First, the two curves are
nearly indistinguishable across the whole range of $N$: under good
separation and reasonable initialization, the latent labels incur
essentially no loss, and the error is driven by parameter estimation
rather than label ambiguity. Second, the fitted slopes ($-0.55$ for EM and $-0.58$ for supervised oracle) are consistent with the rate stated in
Corollary~\ref{cor:em}. Indeed, our bound decomposes into a
 mixing-weight term $O_p(N^{-1/2})$ and location and
scale terms $O_p(N^{-1})$. A sum of an $N^{-1/2}$ term and an
$N^{-1}$ term exhibits a finite-sample log-log slope strictly between
$-1$ and $-1/2$, approaching $-1/2$ as $N$ grows; the observed slopes
fall precisely in this regime. 
\begin{figure}[t]
  \centering
  \includegraphics[width=0.8\textwidth,height=6cm]{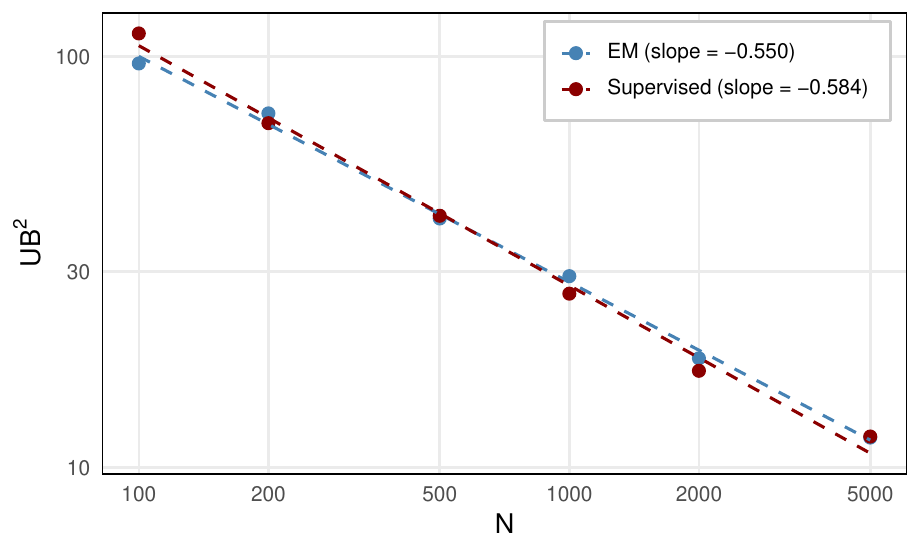}
    \caption{Convergence rates using EM for Gaussian mixture bridge, measured by  $\mathrm{UB}^2$ against $N$ on log-log scales.}\label{fig:mix}
\end{figure}

\subsection{Single Gaussian Marginals and Monge Map}\label{sec:num-gauss}

We illustrate the finite-sample results of Section~\ref{sec:empirical} on a single pair of
Gaussian marginals in dimension $d=20$. The source and target parameters are
$m=n=0$, $\sigma$ with entries
$\sigma_{ij}=\rho^{|i-j|}$, $\rho=0.3$, and $\varsigma=I_d+0.2\mathbf{1}\mathbf{1}^{\prime}$.
The reference transition is specified by $\tau=I_d+0.1\mathbf{1}\mathbf{1}^{\prime}$ and a
severely ill-conditioned forward matrix $\beta=U\,\mathrm{diag}(s_1,\dots,s_d)\,V^{\prime}$
with $s_k=e^{-10(k-1)/(d-1)}$ and $U,V$ two independent Haar orthogonal matrices,
so that the induced map $\cchi=\tau^{-1}\beta$ has condition number of order $10^{5}$. For Monge map, we set $(m,n,\sigma,\varsigma)$ the same as the Gaussian settings.
The estimates are as \eqref{sample} and \eqref{empirical-monge} and measures are $\Da_2(\pi^\star,\widehat\pi^\star(\epsilon))^2$ for Gaussian bridge and $\mathcal{T}_2^2:=\int\nu_{m,\sigma}(dx)~\bigl\Vert\widehat T_\epsilon(x)-T(x)\bigr\Vert_2^2$ for Monge map.
All expectations below are
estimated over $150$ independent replications.


\begin{figure}[t]
  \centering
  \includegraphics[width=0.8\textwidth,height=6cm]{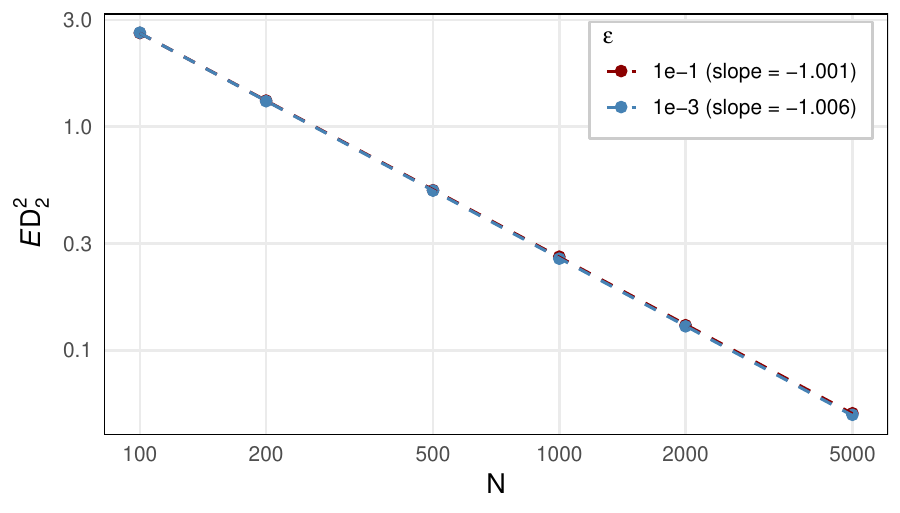}
    \caption{Convergence rates of the empirical Gaussian Sch\"odinger bridge, measured by 2-Wasserstein distance against $N$ on log-log scales.}\label{fig:gauss}
\end{figure}
\begin{figure}[t]
  \centering
  \includegraphics[width=0.8\textwidth,height=6cm]{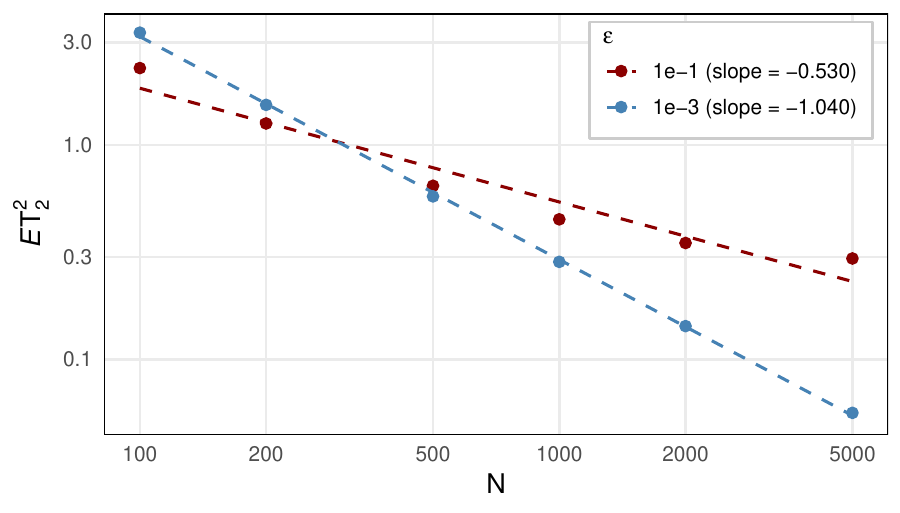}
    \caption{Convergence rates of the empirical Monge map, measured by $\mathcal{T}_2^2$ against $N$ on log-log scales.}\label{fig:monge}
\end{figure}


Figures~\ref{fig:gauss} and \ref{fig:monge} both display the errors against
$N\in\{100,200,500,1000,2000,5000\}$ for the same two inflation rules $\epsilon=0.001$ and $\epsilon=0.1$. First of all, with small inflation parameter, in other words $\epsilon=0.001$, the convergence rate is $O(N^{-1})$. This validates the Corollary~\ref{cor-gauss-rate} and Theorem~\ref{theo-monge}. When $\epsilon$ grows to 0.1, Figure~\ref{fig:monge} shows the convergence of empirical Monge map is clearly affected by the bias term $\epsilon^2$ in \eqref{ineq:monge2} in Theorem~\ref{theo-monge}. However, Figure~\ref{fig:gauss} shows the Gaussian bridge is not affected, still keeping the same rate as $\epsilon$ is small. This is because the Riccati map saturates (cf. \eqref{ricc-infsup}) and  $\epsilon$ enters only through 
$R(\omega)$, leading to the fact that the bias does not significantly affect our convergence rate.


\appendix

\section{Proofs of Technical Results}

\subsection{Proof of Lemma~\ref{Lem-Tex}}\label{sec:lem_prf}

\begin{proof}
We have the trianglular inequality
$$
\Da_2(\pi,\overline{\pi})\leq \Da_2\left((\mu\times K),(\mu\times \overline{K})\right)+
\Da_2\left((\mu\times \overline{K}),(\overline{\mu}\times \overline{K})\right).
$$
Assume that
$$
 \Da_2\left(K(x,\point),\overline{K}(x,\point)\right)^2=\int Q_{x}(d(y,\overline{y}))~\rho_{\YY}(y,\overline{y})^2
$$
for some $2$-Wasserstein optimal coupling $Q_{x}$ between  $K(x,\point)$ and $\overline{K}(x,\point)$.
Choosing the coupling
$$
P(d((x,y),(\overline{x},\overline{y})))
=\mu(dx)~\delta_{x}(d\overline{x})~
Q_{x}(d(y,\overline{y}))
$$
between
$$
\mu(dx)K(x,dy) \quad \mbox{\rm and}\quad
\mu(d\overline{x})\overline{K}(\overline{x},d\overline{y}) 
$$
we check that
\begin{eqnarray*}
 \Da_2\left((\mu\times K),(\mu\times \overline{K})\right)^2&\leq &\int P(d((x,y),(\overline{x},\overline{y})))~\left(\rho_{\XX}(x,\overline{x})^2+\rho_{\YY}(y,\overline{y})^2\right)\\ 
 &=& \int~\mu(dx)~ \Da_2\left(K(x,\point),\overline{K}(x,\point)\right)^2.
\end{eqnarray*}
To estimate
$
\Da_2\left((\mu\times \overline{K}),(\overline{\mu}\times \overline{K})\right)
$,
consider some $2$-Wasserstein optimal coupling 
$
S((x,\overline{x}),d(y,\overline{y}))
$
between $\overline{K}(x,dy)$ and $\overline{K}(\overline{x},d\overline{y})$ so that 
$$
\Da_2\left( \overline{K}(x,\point), \overline{K}(\overline{x},\point)\right)^2=\int
S((x,\overline{x}),d(y,\overline{y}))~\rho_{\YY}(y,\overline{y})^2
\leq a^2~\rho_{\XX}(x,\overline{x})^2
$$

Finally, consider some $2$-Wasserstein optimal coupling 
$
R$
between $\mu$ and $\overline{\mu}$
 and set $$
T(d[(x,y),(\overline{x},\overline{y})]):=R(d(x,\overline{x}))S((x,\overline{x}),d(y,\overline{y}))
$$
Since $T$ is a coupling between $(\mu\times \overline{K})$ and 
$(\overline{\mu}\times \overline{K})$ under our regularity condition we have
\begin{eqnarray*}
\Da_2\left((\mu\times \overline{K}),(\overline{\mu}\times \overline{K})\right)^2&\leq& 
\int R(d(x,\overline{x}))S((x,\overline{x}),d(y,\overline{y}))
~\left(\rho_{\XX}(x,\overline{x})^2+\rho_{\YY}(y,\overline{y})^2\right)\\
&=&\Da_2(\mu,\overline{\mu})^2+ a^2~\int R(d(x,\overline{x}))~\rho_{\XX}(x,\overline{x})^2=  (1+a^2)~\Da_2(\mu,\overline{\mu})^2
\end{eqnarray*}
This ends the proof of the lemma.
\end{proof}

\subsection{Proof of Lemma~\ref{lem:psi}}\label{app:lip}

\begin{proof}
We use the decomposition
$$
\begin{array}{l}
\displaystyle
\psi(\widehat{\sigma}_u,\widehat{\varsigma}_v)-
\psi(\sigma_u,\varsigma_v)\\
\\
=
\widehat{\varsigma}_v^{1/2}~(\cchi~\widehat{\sigma}_u~  \cchi^{\prime})~\widehat{\varsigma}_v^{1/2}-
\varsigma_v^{1/2}~(\cchi~\sigma_u~  \cchi^{\prime})~\varsigma_v^{1/2}\\
\\
=(\widehat{\varsigma}_v^{1/2}-{\varsigma}_v^{1/2})~(\cchi~\widehat{\sigma}_u~  \cchi^{\prime})~\widehat{\varsigma}_v^{1/2}+{\varsigma}_v^{1/2}~(\cchi~\widehat{\sigma}_u~  \cchi^{\prime})~(\widehat{\varsigma}_v^{1/2}-{\varsigma}_v^{1/2})+{\varsigma}_v^{1/2}~(\cchi~(\widehat{\sigma}_u-\sigma_u)~  \cchi^{\prime})~{\varsigma}_v^{1/2}.
\end{array}
$$
Then one has that
$$
\begin{array}{l}
\displaystyle
\Vert\psi(\widehat{\sigma}_u,\widehat{\varsigma}_v)-
\psi(\sigma_u,\varsigma_v)\Vert_2\\
\\
\displaystyle\leq  \Vert \widehat{\varsigma}_v^{1/2}-{\varsigma}_v^{1/2}\Vert_2~\Vert \chi\Vert_2^2~ 
\Vert\widehat{\sigma}_u\Vert_2\left(\Vert\varsigma_v\Vert_2^{1/2}+\Vert\widehat{\varsigma}_v\Vert^{1/2}_2\right)+
~\Vert \chi\Vert_2^2~\Vert\varsigma_v\Vert_2
\Vert \widehat{\sigma}_u-\sigma_u\Vert_2.
\end{array}
$$
We also have the second order decomposition
$$
\begin{array}{l}
\displaystyle
\psi(\widehat{\sigma}_u,\widehat{\varsigma}_v)-
\psi(\sigma_u,\varsigma_v)\\
\\
=(\widehat{\varsigma}_v^{1/2}-{\varsigma}_v^{1/2})~(\cchi~\widehat{\sigma}_u~  \cchi^{\prime})~(\widehat{\varsigma}_v^{1/2}-{\varsigma}_v^{1/2})+(\widehat{\varsigma}_v^{1/2}-{\varsigma}_v^{1/2})~(\cchi~\widehat{\sigma}_u~  \cchi^{\prime})~{\varsigma}_v^{1/2}\\
\\
\hskip5cm+{\varsigma}_v^{1/2}~(\cchi~\widehat{\sigma}_u~  \cchi^{\prime})~(\widehat{\varsigma}_v^{1/2}-{\varsigma}_v^{1/2})+{\varsigma}_v^{1/2}~(\cchi~(\widehat{\sigma}_u-\sigma_u)~  \cchi^{\prime})~{\varsigma}_v^{1/2}.
\end{array}
$$
Hence, one can show that
$$
\begin{array}{l}
\displaystyle
\psi(\widehat{\sigma}_u,\widehat{\varsigma}_v)-
\psi(\sigma_u,\varsigma_v)\\
\\
=(\widehat{\varsigma}_v^{1/2}-{\varsigma}_v^{1/2})~(\cchi~(\widehat{\sigma}_u-{\sigma}_u)~  \cchi^{\prime})~(\widehat{\varsigma}_v^{1/2}-{\varsigma}_v^{1/2})+(\widehat{\varsigma}_v^{1/2}-{\varsigma}_v^{1/2})~(\cchi~{\sigma}_u~  \cchi^{\prime})~(\widehat{\varsigma}_v^{1/2}-{\varsigma}_v^{1/2})\\
\\
+(\widehat{\varsigma}_v^{1/2}-{\varsigma}_v^{1/2})~(\cchi~(\widehat{\sigma}_u-{\sigma}_u)~  \cchi^{\prime})~{\varsigma}_v^{1/2}+(\widehat{\varsigma}_v^{1/2}-{\varsigma}_v^{1/2})~(\cchi~{\sigma}_u~  \cchi^{\prime})~{\varsigma}_v^{1/2}+{\varsigma}_v^{1/2}~(\cchi~{\sigma}_u~  \cchi^{\prime})~(\widehat{\varsigma}_v^{1/2}-{\varsigma}_v^{1/2})\\
\\
+{\varsigma}_v^{1/2}~(\cchi~(\widehat{\sigma}_u-\sigma_u)~  \cchi^{\prime})~(\widehat{\varsigma}_v^{1/2}-{\varsigma}_v^{1/2})+{\varsigma}_v^{1/2}~(\cchi~(\widehat{\sigma}_u-\sigma_u)~  \cchi^{\prime})~{\varsigma}_v^{1/2}.
\end{array}
$$
This yields the local Lipschitz estimate
$$
\begin{array}{l}
\displaystyle
\Vert\psi(\widehat{\sigma}_u,\widehat{\varsigma}_v)-
\psi(\sigma_u,\varsigma_v)\Vert_2\\
\\
\leq \Vert \chi\Vert_2^2~\left(\Vert \widehat{\varsigma}_v^{1/2}-{\varsigma}_v^{1/2}\Vert_2+\Vert{\varsigma}_v\Vert_2^{1/2}\right)^2~\Vert\widehat{\sigma}_u-{\sigma}_u\Vert_2\\
\\
\hskip3cm+\Vert\cchi\Vert^2_2~\Vert {\sigma}_u\Vert_2~\left(\Vert\widehat{\varsigma}_v^{1/2}-{\varsigma}_v^{1/2}\Vert_2+2\Vert {\varsigma}_v\Vert^{1/2}_2\right)~\Vert\widehat{\varsigma}_v^{1/2}-{\varsigma}_v^{1/2}\Vert_2.
\end{array}
$$
Conversely,  by (\ref{square-root-key-estimate}) we have
$$
\Vert \widehat{\varsigma}_v^{1/2}-{\varsigma}_v^{1/2}\Vert_2\leq \frac{1}{\lambda^{1/2}_{\min}({\varsigma}_v)}~\Vert \widehat{\varsigma}_v-{\varsigma}_v\Vert_2.
$$
Hence we conclude that
$$
\begin{array}{l}
\displaystyle
\Vert\psi(\widehat{\sigma}_u,\widehat{\varsigma}_v)-
\psi(\sigma_u,\varsigma_v)\Vert_2\\
\\
\displaystyle\leq \Vert \chi\Vert_2^2~\left(\frac{1}{\lambda^{1/2}_{\min}({\varsigma}_v)}~\Vert \widehat{\varsigma}_v-{\varsigma}_v\Vert_2+\lambda_{\max}^{1/2}({\varsigma}_v)\right)^2~\Vert\widehat{\sigma}_u-{\sigma}_u\Vert_2\\
\\
\displaystyle\hskip3cm+\frac{\Vert\cchi\Vert^2_2~\Vert {\sigma}_u\Vert_2}{\lambda^{1/2}_{\min}({\varsigma}_v)}~\left(\frac{1}{\lambda^{1/2}_{\min}({\varsigma}_v)}~\Vert \widehat{\varsigma}_v-{\varsigma}_v\Vert_2+2\lambda_{\max}^{1/2}({\varsigma}_v)\right)~\Vert \widehat{\varsigma}_v-{\varsigma}_v\Vert_2
\end{array}$$
which completes the proof. 
\end{proof}


\begin{thebibliography}{99.}


\bibitem{adm-24}
{\sc Akyildiz},  O. D. ,  {\sc Del Moral,} P. \&  {\sc Miguez}, J.~(2026).  Gaussian entropic optimal transport: Schr\" odinger bridges and the Sinkhorn algorithm.  \emph{Found.  Data Sci.} (to appear).

\bibitem{alberg}
{\sc Albergo}, M.  \&  {\sc Vanden-Eijnden}, E.~(2023). Building normalizing flows with stochastic interpolants.
In \emph{ICLR}.
 
  \bibitem{hemmen}
 {\sc Ando}, T. \& {\sc van Hemmen}, J.L.~(1980). An inequality for trace ideals.  \emph{Commun. Math. Phys.}, 
{\bf 76},  143--148.

 
 \bibitem{bathia-2}
{\sc Bhatia},  R., {\sc Jain}, T., \& {\sc Lim}, Y.~(2019).  On the Bures-Wasserstein distance between positive definite matrices.  \emph{Exposit.  Math.},   {\bf 37},  165--191.


\bibitem{bing}
{\sc Bing}, X., {\sc Kong}, D., \& {\sc Li}, B.~(2026). Convergence and Optimality of the EM Algorithm Under Multi-Component Gaussian Mixture Models. \emph{Biometrika}, asag047.


\bibitem{chen}
{\sc Chen}, Y.,  {\sc Georgiou}, T. T. \& {\sc Tannenbaum},  A.~(2019). Optimal Transport for Gaussian Mixture Models
\emph{IEEE Access}, {\bf 7},  6269-6278.

\bibitem{csis}
{\sc Csiszar},  I.~(1975). I-Divergence Geometry of Probability Distributions and Minimization Problems. 
\emph{Ann. Probab.},  {\bf 3}, 146–158.

\bibitem{cut}
{\sc Cuturi},  M.~(2013).  Sinkhorn distances: Lightspeed computation of optimal transport. In 
\emph{Adv.  Neur.  Inf.  Proc. Sys.},  2292–2300.

\bibitem{bort}
{\sc de Bortoli},  V.,  {\sc Thornton},  J.,  {\sc Heng}, J. \& {\sc Doucet}, A.~(2021)
Diffusion Schr\"odinger bridge with
applications to score-based generative modeling. 
In \emph{Adv.  Neur.  Inf.  Proc. Sys.} 17695–17709.

\bibitem{sinkhorn_rev}
{\sc Del Moral}, P. \& {\sc Jasra},  A.~(2026).  New trends in the stability of Sinkhorn semigroups.  \emph{J.  Theor.  Probab.} (to appear).

\bibitem{delon}
{\sc Delon}, J., \& {\sc Desolneux}, A.~(2020). A Wasserstein-type distance in the space of Gaussian mixture models. \emph{SIAM Journal on Imaging Sciences}, 13(2):936--70.

\bibitem{genev}
{\sc Genevay}, A.,  {\sc Cuturi},  M.  \& {\sc Peyr\'e}, G.~(2018).
Learning generative models with Sinkhorn divergences.  In \emph{AISTATS}
1608–1617.

\bibitem{koro2}
{\sc Gushchin},  N. ,  {\sc Kholkin},  S.,  {\sc Burnaev,} E.  \& {\sc Korotin},  A.~(2024).
Light and  Schr\"odinger bridge optimal matching.  In \emph{ICML}.

\bibitem{har}
{\sc Harchaoui}, Z., {\sc Liu}, L.,  \& {\sc Pal},  S.~(2024).  Asymptotics of discrete Schr\"ondinger bridges via chaos decomposiitons.  \emph{Bernoulli},  {\bf 30},  1945-1970.

 \bibitem{higham}
{\sc Higham.},   N. J. ~(2008).  \emph{Functions of Matrices: Theory and Computation}.  SIAM: Philadelphia.

\bibitem{koro1}
{\sc Korotin},  A., {\sc Gushchin},  N. \& {\sc Burnaev,} E. ~(2024).
Light  Schr\"odinger bridges.  In \emph{ICML}.

\bibitem{leonard}
{\sc L\'eonard}, C.~(2014).  A survey of the Schr\"odinger problem and some of its connections with
optimal transport. \emph{Discrete Contin. Dyn. Syst.},  {\bf 34}, 1533–1574.


\bibitem{mix_approx}
{\sc Li}, J. Q.,  \&  {\sc Barron},  A. R.~(1999).
Mixture density estimation.  In \emph{Adv.  Neur.  Inf.  Proc. Sys.} 

\bibitem{maeda}
{\sc Maeda}, I.,  {\sc Yao}, R. \& {\sc Nitanda},  A.~(2025). 
Statistical Analysis of the Sinkhorn Iterations for Two-Sample Schr\"odinger Bridge Estimation.
In \emph{Adv.  Neur.  Inf.  Proc. Sys.} .

\bibitem{pool}
{\sc Pooladian},  A. A. \& {\sc Niles-Weed},  J.~(2025).   Plug-in estimation of Schr\"odinger bridges.
\emph{SIAM J.  Math.  Data Sci.},  {\bf 7},  1315--1336.



\bibitem{rapak}
{\sc Rapakoulias}, G.,  {\sc Pedram},  A. R.,  {\sc Liu},  F. ,  {\sc Zhu},  L. \& {\sc Tsiotras},  P.~(2025).
Go With the Flow: Fast Diffusion for Gaussian mixture models. 
In \emph{Adv.  Neur.  Inf.  Proc. Sys.}.

\bibitem{rapak1}
{\sc Rapakoulias}, G.,  {\sc Pedram},  A. R.,  \& {\sc Tsiotras},  P.~(2025).
Steering large agent populations using mean-field
Schr\"odinger bridges with Gaussian mixture models.  \emph{IEEE Cont.  Sys. Lett.},  {\bf 9},  1760--1765.

\bibitem{stromme}
{\sc Stromme},  A. J.~(2023).
Sampling From a Schr\"odinger Bridge.  In \emph{ICML}.

\bibitem{mix}
{\sc Titterington}, M.,  {\sc Smith},  A. F.  M.,  \& {\sc Makov},  U. E.~(1985).   \emph{Statistical Analysis of Finite Mixture Distributions}.  Wiley: London.




 \end{thebibliography}
\end{document}